\documentclass[11pt]{article}
\usepackage[utf8]{inputenc}
\usepackage{placeins}
\usepackage{booktabs} 
\usepackage{makecell} 
\usepackage{array}

\usepackage{enumerate}
\usepackage{eufrak}
\usepackage{lscape}
\usepackage{rotating}
\usepackage{multirow}
\usepackage{color}
\usepackage{url}
\usepackage{xcolor}
\usepackage{amsthm}

\newtheorem{theorem}{Theorem}[section]

\usepackage[ruled,vlined,noline,linesnumbered]{algorithm2e}

\usepackage{titlesec}
\usepackage[titletoc]{appendix}

\newtheorem{corollary}{Corollary}[section]

\newtheorem{definition}{Definition}[section]

\newtheorem{lemma}{Lemma}[section]

\newtheorem{proposition}{Proposition}[section]
\newtheorem{remark}{Remark}[section]

\newtheorem{assumption}{Assumption}[section]

\newenvironment{assumption*}
{\ifnum\value{subassumption}=0 \stepcounter{assumption}\fi\subassumption}
{\endsubassumption}
\newenvironment{assumption+}[1]
{\subassumption}
{\endsubassumption}

\setlength{\unitlength}{1mm}

\def\R{\mathbb{R}}
\def\Rplus{\mathbb{R}_{\ge 0}}
\def\T{\top}

\def\bbD{\mathbb{D}}
\def\cO{\mathcal{O}}

\newcommand{\n}[1]{\|#1\|}




\usepackage[breaklinks=true, pdftex, 
pdfborder={0 0 0}, colorlinks=true, linkcolor=blue, citecolor=blue, urlcolor=blue, 
bookmarks=false, pdfpagelabels=false]{hyperref}
\usepackage{fullpage}
\usepackage{times}
\usepackage{ifthen}   
\usepackage{graphicx,graphics}
\usepackage{setspace}
\usepackage{amssymb,amsmath,amsthm}
\usepackage{dsfont}
\usepackage{color}
\usepackage{wrapfig}
\usepackage{multirow}
\usepackage{multicol}
\usepackage{pdfpages}
\usepackage{longtable} 	
\usepackage{booktabs}
\usepackage{datetime}
\usepackage{xspace}
\usepackage{paralist} 
\usepackage{enumitem} 
\usepackage[top=1in,bottom=1in,left=0.95in,right=0.95in]{geometry} 
\usepackage[T1]{fontenc}
\usepackage{doi}
\usepackage[numbers,compress,sort]{natbib}
\usepackage{algorithmic}
\usepackage{comment}
\usepackage{cleveref} 
\usepackage{fontawesome5}
\usepackage{listings}
\usepackage{upgreek}
\usepackage[bbgreekl]{mathbbol}
\DeclareSymbolFontAlphabet{\mathbb}{AMSb}
\DeclareSymbolFontAlphabet{\mathbbl}{bbold}
\usepackage{lineno}
\definecolor{codegreen}{rgb}{0,0.6,0}
\definecolor{codegray}{rgb}{0.5,0.5,0.5}
\definecolor{codepurple}{rgb}{0.58,0,0.82}
\definecolor{backcolour}{rgb}{0.95,0.95,0.92}
\definecolor{ForestGreen}{RGB}{34,139,34}
\usepackage{xcolor}
\usepackage{listings}
\usepackage{xparse}

\usepackage{stackengine}   
\newcommand{\ubar}[1]{\stackunder[1.0pt]{$#1$}{\rule{.8ex}{.075ex}}}    
\NewDocumentCommand{\codeword}{v}{%
	\texttt{\textcolor{blue}{#1}}%
}
\lstset{language=C,keywordstyle={\bfseries \color{blue}}}
\lstdefinestyle{mystyle}{
	backgroundcolor=\color{backcolour},   
	commentstyle=\color{codegreen},
	keywordstyle=\color{magenta},
	numberstyle=\tiny\color{codegray},
	stringstyle=\color{codepurple},
	basicstyle=\ttfamily\footnotesize,
	breakatwhitespace=false,         
	breaklines=true,                 
	captionpos=b,                    
	keepspaces=true,                 
	numbers=left,                    
	numbersep=5pt,                  
	showspaces=false,                
	showstringspaces=false,
	showtabs=false,                  
	tabsize=2
}
\lstset{style=mystyle}
\usepackage{titlesec}
\usepackage{mdframed}  
\usepackage{colortbl}
\usepackage{mathrsfs} 
\definecolor{highlight}{HTML}{DFEFF9}
\mdfdefinestyle{highlightbox}{leftmargin=0pt,rightmargin=0pt,%
	innerleftmargin=3pt,innerrightmargin=2pt,%
	innertopmargin=4pt,innerbottommargin=4pt,%
	backgroundcolor=highlight,linecolor=white}
\definecolor{algm}{HTML}{D4EFDF} 
\mdfdefinestyle{highlightalg}{leftmargin=0pt,rightmargin=0pt,%
	innerleftmargin=3pt,innerrightmargin=2pt,%
	innertopmargin=4pt,innerbottommargin=4pt,%
	backgroundcolor=algm,linecolor=white}
\definecolor{soft}{HTML}{F2D7D5} 
\mdfdefinestyle{highlightstruct}{leftmargin=0pt,rightmargin=0pt,%
	innerleftmargin=3pt,innerrightmargin=2pt,%
	innertopmargin=4pt,innerbottommargin=4pt,%
	backgroundcolor=soft,linecolor=white}
\usepackage{soul}

\makeatletter
\def\namedlabel#1#2{\begingroup
	#2%
	\def\@currentlabel{#2}%
	\phantomsection\label{#1}\endgroup
}
\makeatother




\newcommand{\cm}{\mathrm{cm}}

\newcommand{\E}[2][]{\operatorname{\mathbb{E}}_{#1}\left[ #2\right]} 
 

\usepackage{bm} 

\newcommand{\ab}{\bm{a}}

\newcommand{\db}{\bm{d}}

\newcommand{\gb}{\bm{g}}

\newcommand{\sba}{\bm{s}} 

\newcommand{\ub}{\bm{u}}

\newcommand{\vb}{\bm{v}}

\newcommand{\xb}{\bm{x}}

\newcommand{\yb}{\bm{y}}

 

\DeclareGraphicsRule{.pdftex}{pdf}{.pdftex}{}

\usepackage{tikz}
\usetikzlibrary{positioning}
\usetikzlibrary{calc}
\usetikzlibrary{shapes.geometric}
\usetikzlibrary{topaths}
\usetikzlibrary{external}
\usetikzlibrary{fadings,decorations.pathreplacing} 

\usepackage{pgfplots}
\usetikzlibrary{shapes,decorations.pathmorphing,patterns}
\pgfplotsset{compat=newest}
\usepgfplotslibrary{fillbetween}

\graphicspath{{figures/}}

\usepackage{hyperref}
\hypersetup{
unicode = false,
pdftoolbar = true,
pdfmenubar = true,
pdffitwindow = true,
pdftitle = {DS Survey},
pdfauthor = {Dzahini/Rinaldi/Royer/Zeffiro},
pdfsubject = {DirectSearch},
pdfnewwindow = true,
pdfkeywords = {DirectSearch,DFO},
colorlinks = true,
linkcolor = blue,
citecolor = blue,
filecolor = black,
urlcolor = blue,
breaklinks = true
}

\newcommand{\Z}{\mathbb{Z}}
\newcommand{\Esp}{\mathbb{E}}

\newcommand{\snOne}{\mathbb{S}^{n-1}}

\newcommand{\N}{\mathbb{N}}

\newcommand{\F}{\mathcal{F}}

\newcommand{\pr}{\mathbb{P}}
\newcommand{\M}{\mathcal{M}}

\newcommand{\vzero}{\mathbf{0}}

\newcommand{\abs}[1]{\left\lvert#1\right\rvert}

\newcommand{\accolade}[1]{\left\lbrace#1\right\rbrace}
\newcommand{\accoladekinN}[1]{{\left\lbrace#1\right\rbrace}_{k\in\mathbb{N}}}

\newcommand{\prob}[1]{\mathbb{P}\left(#1\right)}

\newcommand{\norme}[1]{\left\lVert#1\right\rVert}
\newcommand{\scal}[2]{\left\langle#1,#2\right\rangle}


\newcommand{\normii}[1]{{\left\lVert#1\right\rVert}_{2}}

\newcommand{\norminf}[1]{{\left\lVert#1\right\rVert}_{\infty}}

\newcommand{\BBD}{\stackunder[1.0pt]{$\bbD$}{\rule{1.0ex}{.075ex}}}

\newcommand{\PP}{{\stackunder[1.0pt]{$\bm{P}$}{\rule{1.0ex}{.075ex}}}}
\newcommand{\Sk}{\bm{{\stackunder[1.0pt]{$\bm{s}$}{\rule{.8ex}{.075ex}}}}_k}

\newcommand{\Xk}{\bm{{\stackunder[1.0pt]{$x$}{\rule{1.0ex}{.075ex}}}}_k}

\newcommand{\xk}{\bm{{x}}_k}

\newcommand{\xkun}{\bm{{x}}_{k+1}}

\newcommand{\x}{\bm{x}}
\newcommand{\y}{\bm{y}}
 
\newcommand{\rn}{\mathbb{R}^n}

\newcommand{\rnn}{\mathbb{R}^{n\times n}}

\newcommand{\rnp}{\mathbb{R}^{n\times p}}

\newcommand{\s}{\bm{s}}

\newcommand{\vi}{\bm{v}}
\newcommand{\SMatrix}{\bm{{\stackunder[1.0pt]{$S$}{\rule{1.3ex}{.075ex}}}}}



\newcommand{\di}{\bm{d}}

\newcommand{\f}{f}

\newcommand{\ef}{\varepsilon_f}



\newcommand{\ssize}{\alpha}

\newcommand{\Dm}{{\stackunder[1.0pt]{$\ssize$}{\rule{.8ex}{.075ex}}}_k^m}
\newcommand{\dm}{\ssize_k^m}
\newcommand{\dpl}{\ssize_k^p}
\newcommand{\Mk}{\mathcal{M}_k}
\newcommand{\dmun}{{\ssize_{k+1}^m}}
\newcommand{\dpun}{{\ssize_{k+1}^p}}

\newcommand{\Dkp}{\mathbb{D}_k^p}

\newcommand{\Fok}{{\stackunder[0.6pt]{$f$}{\rule{.8ex}{.075ex}}}_k^{\bm{0}}}
\newcommand{\Fsk}{{\stackunder[0.6pt]{$f$}{\rule{.8ex}{.075ex}}}_k^{\s}}

\newcommand{\Dplj}{{\stackunder[1.0pt]{$\ssize$}{\rule{.8ex}{.075ex}}}_k^p}
\newcommand{\bl}{\color{black}}
\newcommand{\rd}{\color{black}}
\newcommand{\rdj}{\color{black}}
\newcommand{\bll}{\color{purple}}


\newcommand{\rev}{\textcolor{black}}
\newcommand{\revn}{\textcolor{black}}
\newcommand{\revnn}{\textcolor{black}}

\title{\revn{Direct-search methods in the year 2025:\\
Theoretical guarantees and algorithmic paradigms}}
\author{
K. J. Dzahini\thanks{Argonne National Laboratory, 9700 S. Cass Avenue, Lemont, IL 60439, USA \texttt{(kdzahini@anl.gov)}. Funding of this author was supported by the CAMPA and ComPASS-4, projects of the U.S. Department of Energy, Office of Science, Office of Advanced Scientific Computing Research and Office of High Energy Physics, Scientific Discovery through Advanced Computing (SciDAC) program under Contract No.~DE-AC02-06CH11357.}
\and 
F. Rinaldi\thanks{Dipartimento di Matematica ``Tullio Levi-Civita'', Universit\`a di Padova, Padua, Italy \texttt{(rinaldi@math.unipd.it)}. }
\and
C. W. Royer\thanks{LAMSADE, CNRS, Universit\'e Paris Dauphine-PSL, Place du Mar\'echal de Lattre de Tassigny, 75016 Paris,
France \texttt{(clement.royer@lamsade.dauphine.fr)}. Funding for this author's research was partially provided by Agence Nationale de la Recherche through program ANR-19-P3IA-0001
(PRAIRIE 3IA Institute) and by CNRS under the IAE grant BONUS.}
\and
D. Zeffiro\thanks{Dipartimento di Matematica ``Tullio Levi-Civita'', Universit\`a di Padova, Padua, Italy \texttt{(zeffiro@math.unipd.it)}.}
}


\begin{document}
	
\maketitle

\begin{abstract}
    Optimizing a function without using derivatives is a challenging paradigm, that 
    precludes from using classical algorithms from nonlinear optimization, and may 
    thus seem intractable other than by using heuristics. \rev{Nevertheless,} the 
    field of derivative-free optimization has succeeded in producing algorithms 
    that do not rely on derivatives and yet are endowed with convergence guarantees. 
    One class of such methods, called \revn{direct-search methods}, is particularly 
    popular thanks to its simplicity of implementation, even though its theoretical 
    underpinnings are not always easy to grasp.

    In this work, we survey contemporary direct-search algorithms from a 
    theoretical viewpoint, with the aim of highlighting the key theoretical 
    features of these methods. \rev{We provide a basic introduction to the main 
    classes of direct-search methods, including line-search techniques that have 
    received little attention in earlier surveys. We also put a particular emphasis 
    on probabilistic direct-search techniques and their application to noisy 
    problems, a topic that has undergone significant algorithmic development in 
    recent years. Finally, we complement existing surveys by reviewing the main 
    theoretical advances for \revn{solving} constrained and multiobjective 
    optimization using \revn{direct-search algorithms}.}
\end{abstract}

\section{Introduction}
\label{sec:intro}

Direct-search methods, whose name was coined by Hooke and Jeeves~\cite{HoJe61a}, 
form one of the main classes of derivative-free optimization (DFO) algorithms. 
Those methods work in a very easy and intuitive way: they  sample the objective 
function at a finite number of points at each iteration and decide which actions to 
take next solely based on those function values, either by function value 
comparison through ranking  or based on numerical values, and without \revn{fundamentally relying upon} any explicit 
or implicit derivative approximation or model building. This simple principle partly explains why direct-search approaches {\rdj have been} highly popular {\rdj since} the early days of numerical optimization. Despite the existence of tools that {\rdj allow to} easily 
use first- or second-order derivative information in an algorithm (i.e., tools that 
compute derivatives automatically), there is still \rev{a need for methods that 
rely solely on function values in simulation-based problems where derivative 
information can hardly be obtained~\cite{AuHa2017}. As a result, direct-search 
methods remain a popular choice of practitioners. Although classical techniques 
such as Nelder--Mead's simplex method~\cite{nelder1965simplex} or evolutionary 
algorithms \cite{back1996evolutionary} are not built on strong theoretical 
foundations, modern direct-search algorithms are typically equipped with 
convergence guarantees, that certify their ability to converge to a 
(possibly approximate) solution of the problem at hand \rev{asymptotically (global 
convergence) or in finite time (complexity bounds)}. Still, those guarantees are 
somewhat scattered in the literature, in research papers as well as surveys and 
books on derivative-free optimization.}

Indeed, direct-search methods have been the subject of several dedicated 
surveys~\cite{audet2014survey,KoLeTo03a,LeToTr00a,wright1996direct}.
The 1996 survey of Wright~\cite{wright1996direct} was arguably the first on 
direct-search schemes, with an emphasis on simplicial methods that may lack 
convergence guarantees. The later survey of Lewis et al.~\cite{LeToTr00a} 
discussed in particular pattern search methods along with the convergence results 
that relied on simple decrease. The landmark survey of Kolda et 
al.~\cite{KoLeTo03a} followed and provided a thorough investigation of convergence 
results on \revn{direct-search schemes} based on simple and sufficient decrease, 
highlighting developments in the (linearly) constrained setting but focusing mainly 
on smooth objectives. To the best of our knowledge, the most recent survey article 
is due to Audet~\cite{audet2014survey}, who described the developments of 
\revn{direct-search methods} with an emphasis on mesh adaptive \revn{direct-search} 
(MADS) \revn{variants and their} use in applications.
More broadly, direct-search methods have been featured in textbooks and reviews on 
derivative-free optimization. Conn et al.~\cite{CoScVibook} covered the basics of 
direct-search methods with convergence results. Audet and Hare~\cite{AuHa2017} 
focused on direct-search schemes based on simple decrease, and provided a thorough 
introduction to pattern search and MADS. Finally, in the recent overview of 
derivative-free algorithms, Larson et al.~\cite{LMW2019AN} provided \rev{complexity 
results for direct-search methods in comparison with zeroth-order algorithms, 
especially in the convex setting.}

\rev{This survey aims at complementing existing work by reviewing the main 
direct-search techniques proposed in the literature and their theoretical 
guarantees. First, more emphasis is put on line-search techniques, that 
have been somewhat overlooked in previous surveys. Secondly, connections are drawn 
between the directional and the mesh-based direct-search techniques, two classes 
of methods that tend to be discussed separately. We believe that readers should be 
aware of the two classes of algorithms, along with their specifics, that are 
highlighted throughout our work. Third, this survey begins with a thorough 
treatment of direct-search methods in the unconstrained setting, in both a smooth 
and a nonsmooth setting (note that we do not leverage convexity in this survey's 
results). The underlying goal is to ease the transition from this setting to more 
complicated setups, such as noisy or constrained optimization. We believe that a 
reader interested in using direct-search methods \revn{would} likely face such challenges, 
and that a good understanding of an algorithm's behavior in the simplest possible 
setting provides valuable insight regarding its extension to other settings.}

\rev{The rest of the survey is structured as follows. Section~\ref{sec:algosdef} 
provides a generic direct-search template for unconstrained problems, along with 
the key mathematical concepts used to establish convergence results. 
Section~\ref{sec:unc} reviews the main classes of algorithms in the unconstrained setting and represents a starting point for any of the subsequent sections. 
Section~\ref{sec:stoch} describes algorithms that tolerate noise in the function 
values. Section~\ref{sec:cons} is concerned with handling constrained formulations. 
Section~\ref{sec:multiobj} presents the main results in a multiobjective setting. 
The final comments are given in Section~\ref{sec:conc}.}

\section{Main \rev{algorithm} and definitions}
\label{sec:algosdef}

\paragraph{Notations}
Throughout, \rev{the} vectors {\rdj are} written in lowercase boldface (e.g.,
$\vi\in\rn, n\geq 2$) while matrices {\rdj are} written in uppercase boldface (e.g., 
$\bm{S}\in\rnp$). The set of column vectors of a matrix $\bm{D}$ {\rdj is} denoted 
by~$\mathbb{D}$. \rev{Other sets {\rdj are} denoted by calligraphic or upper Greek letters (e.g., $\mathcal{K},\Omega$), except for the sets~$\mathbb{N}$ and 
$\mathbb{R}$ of natural and real numbers}. The set of nonnegative real numbers {\rdj are} denoted by $\Rplus$. Given two integers $a,b$, we write $[a:b]$ 
for the set of all integer values $a \le z \le b$. Sequences indexed by $\N$ \revn{are} denoted by $\{a_k\}_{k \in \N}$ or $\{a_k\}$ in absence of ambiguity. Random 
\revn{objects} {\rdj are} denoted using underlined letters (e.g., 
$\ubar{z}, \ubar{\vi}, \SMatrix$). The expected value operator {\rdj is} denoted by 
$\mathbb{E}[\cdot]$. When necessary, we \revn{specify} the variables on which the 
expected value is computed (e.g. $\mathbb{E}_{\ubar{\zeta}}[\cdot]$ for the 
expected value over $\ubar{\zeta}$). \rev{ Given a point $\xb \in \R^n$ and a set 
$\Omega \subset \R^n$, we use \rev{$\pi_{\Omega}(\cdot)$} for the projection 
operator on $\Omega$. The notation $B_{\delta}(\xb)$ refers to the ball of radius 
$\delta$ centered at $\xb$. Finally, given} a quantity $A$, the notation 
$\mathcal{O}(A)$ means a constant times $A$, where the constant does not depend on 
$A$.

\subsection{The direct-search framework}
\label{ssec:framework}

In this section, we introduce the main ideas behind \revn{direct-search schemes} 
by considering an unconstrained minimization problem
\begin{equation}
\label{eq:uncpb}
    \min_{\xb \in \R^n} f(\xb),
\end{equation}
where \rev{$f:\R^n \rightarrow \R$} is the objective function of our problem. 
\rev{In this survey, we describe theoretical guarantees that require some regularity 
on $f$ (continuity, smoothness). However,} we emphasize that a direct-search method 
only requires access to \rev{the} values of $f$ to proceed.

Algorithm~\ref{alg:basicDS} describes a generic direct-search framework, that \revn{is} used as a baseline for developing more involved variants in the subsequent 
sections. The method initially selects a point in the variable space (also called 
the \emph{incumbent solution} in the direct-search literature), as well as an 
initial value for the so-called \emph{stepsize}. {\rdj At every iteration, the function  is evaluated at trial points, obtained from the current point by a displacement along certain directions called \emph{poll directions}.}
If one of those points provides a decrease with respect to the current function 
value as measured by a \emph{forcing function}, this point can be accepted as the 
new point, and the stepsize can be increased (or kept constant). Otherwise, 
the next point is set to be the current point, and the stepsize is decreased.

\begin{algorithm}[h]
	\caption{Basic direct-search method}
	\label{alg:basicDS}
	\begin{algorithmic}[1]
        \par\vspace*{0.1cm}
		\STATE \textbf{Inputs:} Starting point/incumbent solution 
		$\x_0\in\mathbb{R}^{n}$, initial stepsize $\ssize_0> 0$,\\ 
        stepsize update parameters $0 < \theta < 1 \le \gamma$, forcing 
        function $\rho: \Rplus \rightarrow \Rplus$.
		\FOR{$k=0,1,2,\ldots$}
			\STATE Select a set $\bbD_k$ of poll directions.
		      \IF{$f(\xk+\bm{s}) < f(\xk) - \rho(\ssize_k)$ holds for 
		      some $\bm{s} \in \accolade{\ssize_k\bm{d}:\bm{d} \in \bbD_k}$}
                \STATE Declare the iteration as successful,
                set $\xkun=\xk+\bm{s}$ and $\ssize_{k+1}= \gamma \ssize_k$.
            \ELSE
			    \STATE Declare the iteration as unsuccessful, set $\xkun=\xk$ and 
                $\ssize_{k+1}=\theta \ssize_k$.
            \ENDIF
		\ENDFOR
	\end{algorithmic}
\end{algorithm}

Each step of Algorithm~\ref{alg:basicDS} highlights the key ingredients that define 
a given direct-search method. First, the choice of poll directions $\mathbb{D}_k$ 
is instrumental to deriving convergence results. These directions can be chosen 
in a deterministic or randomized fashion, fixed throughout the iterations or 
varying at every iteration. In the rest of Section~\ref{sec:algosdef}, we \revn{review} the most classical polling strategies.

Secondly, the condition for success
\begin{equation}
\label{eq:basicDSdec}
         f(\xk+\bm{s}) < f(\xk) - \rho(\alpha_k),
\end{equation}
characterizes a major divide between direct-search methods, depending on whether or 
not any possible improvement in the objective function is accepted. 
\rev{Algorithm~\ref{alg:basicDS} (or an instance thereof) is said to be based on a 
\emph{sufficient decrease condition} when \revnn{$\rho(\alpha)>0$ if $\alpha>0$}. Otherwise, 
the algorithm is said to be based on \emph{simple decrease}.}

Thirdly, in order to check the decrease condition~\eqref{eq:basicDSdec}, 
Algorithm~\ref{alg:basicDS} requires access to the function value at both 
$\xk+\bm{s}$ and $\xk$. When exact values of $f$ are available, one only needs to 
evaluate $f$ once at a given point, hence values can be saved between iterations to 
avoid extra calculations. On the contrary, repeated evaluations (also termed 
replications in the literature) are particularly useful in the context of 
\emph{stochastic} optimization, where only noisy evaluations of the objective are 
available. Such algorithms are an important part of this survey and are reviewed in 
Section~\ref{sec:stoch}.

Fourthly, the set of points that are considered at every iteration may be broader 
than those obtained through the polling directions and the current stepsize. In 
particular, \revn{direct-search methods of line-search type}, which are discussed in 
Section~\ref{ssec:linesearch}, evaluate additional points beyond those considered 
in polling in order to perform extrapolation. 

Finally, the decision made at every iteration of Algorithm~\ref{alg:basicDS} may 
not require all points in $\accolade{\alpha_k\bm{d}:\bm{d} \in \bbD_k}$ to be 
checked in case of success (although this remains necessary to declare an iteration 
as unsuccessful).  \rev{Algorithm~\ref{alg:basicDS} (or an instance thereof) is 
said to use \emph{opportunistic polling} if {\rdj S}tep~2 stops as soon as it finds a 
vector~$\bm{s}$ satisfying~\eqref{eq:basicDSdec}. Otherwise, the algorithm is said 
to use \emph{complete polling}.}

\begin{remark}\label{rem:searchstep}
	\rev{Practical direct-search methods typically include an optional search step, 
	that allows for using problem-specific heuristics as well as additional strategies 
	for optimization~\cite[Section 7.2]{AuHa2017}. Nevertheless, theoretical 
	guarantees of direct-search techniques do not rely on the properties of this search 
	step. Since our survey focuses on these guarantees, we present direct-search 
	methods without a search step, but emphasize that search steps are often crucial 
	to the practical performance of \revn{these algorithms}.}
\end{remark}

\subsection{Main mathematical concepts}
\label{ssec:maths}

In this section, we gather key algebraic definitions associated with the choice 
of directions in a direct-search method, for future reference in the rest of the 
survey. We also provide standard analytical definitions associated with the 
objective function (and the constraint functions when applicable).

\subsubsection{Geometry of polling sets}
\label{sssec:geometrypoll}

One of the main features of a direct-search method is its strategy to choose the 
poll directions. Those directions play a key role in guaranteeing 
convergence, provided they possess favorable geometric properties. The first 
property below is an intrinsic property of a direction set.

\begin{definition}
\label{def:cm}{\rdj (\cite[Section~2.1]{GrRoViZh2015})}
	For a vector $\vi \in \R^n\setminus \{0\}$ and a set of directions $\bbD$, 
    the cosine measure of $\bbD$ with respect to~$\vi$ is defined as
	\begin{equation*}
		\textnormal{cm}(\bbD, \vi) 
		:= 
		\max_{\db \in \bbD} \frac{\scal{\db}{\vi}}{\n{\db}\n{\vi}} \, . 
	\end{equation*}
	The \emph{cosine measure} of $\bbD$ is then 
	\begin{equation*}
		\textnormal{cm}(\bbD) 
		:= 
		\min_{\vi \in \mathbb{R}^n \setminus \{0\}} \textnormal{cm}(\bbD, \vi).
	\end{equation*}
\end{definition}

The cosine measure of a set $\bbD$ captures how much of the space is covered by the 
cone spanned using the vectors in $\bbD$. The best scenario corresponds to spanning 
the entire space, and corresponds to the following notion.

\begin{definition}
\label{def:pss}
	A set $\bbD$ of \revn{$n_D$} vectors in $\R^n$ is called a \emph{positive spanning set} if its 
	cosine measure is positive, or equivalently, if every vector in $\R^n$ can be 
	written as a nonnegative linear combination of vectors in $\bbD$.
\end{definition}

Positive spanning sets are the typical choice of poll directions for direct-search 
methods. In addition to relying on positive spanning sets, certain direct-search 
methods ensure that the points considered for evaluation lie on carefully designed 
grids. We adopt here the terminology of meshes and frames~\cite{AuHa2017}, and 
provide the definitions independently of any algorithm.

\begin{definition}
\label{def:meshframe}
{\rdj Let $\bm{D}:=\bm{G}\bm{Y}\in\R^{n\times n_D}$ be the matrix representation of a set $\bbD$, where $\bm{G}\in\rnn$ is invertible and the columns of $\bm{Y}\in\Z^{n\times n_D}$ form a positive spanning set for $\rn$.
}
For any vector $\bm{x} \in \R^n$ and any 
    $\alpha^{\rdj m}>0$, the \emph{mesh} of coarseness $\alpha^{\rdj m}$ corresponding to $\bm{D}$ 
    and centered at $\bm{x}$, is defined by
    \begin{equation}
    \label{eq:mesh}
        \mathcal{M} = 
        \accolade{\x+\alpha^{\rdj m} \bm{D}\bm{z}:\  \bm{z}\in\N^{n_D}} {\rdj \subset \rn}.
    \end{equation}
{\rdj T}he frame {\rdj $\mathcal{F}\subseteq \mathcal{M}$} of {\rdj extent} {\rdj $\alpha^p\geq \alpha^m$ generated by $\bm{D}$, centered at $\x$,} is defined by
    \begin{equation}
    \label{eq:frame}
        \mathcal{F} = 
        \accolade{\bm{y} \in \mathcal{M}: \norminf{\bm{y}-\x} \le {\rdj \alpha^p d_{\max}}}{\rdj ,}
    \end{equation}
{\rdj where $d_{\max}:=\max\accolade{\norminf{\di'}:\di'\in \bbD}$.}
\end{definition}

One interesting feature of meshes and frames is that they lead to asymptotic 
density, a concept that has proven fundamental in analyzing direct-search methods, 
especially in the nonsmooth setting~\cite{AuDe2006,AuHa2017}.

\begin{definition}
\label{defi:asymptdense} 
    \rev{An infinite sequence $\{\bbD_k\}_{k \in \mathcal{K}}$} of vector sets 
    defines an asymptotically dense set of directions if the set
    \[
        {\rdj \bbD_\infty:=} \accolade{\di/\norminf{\di}:\ \di\in\bbD_k }_{k\in\mathcal{K}}
    \]
    is dense in the unit sphere {\rdj $\mathbb{S}^{n-1}$ of} $\rn${\rdj , \revn{i.e. $\forall \bm{u}\in \mathbb{S}^{n-1}, \forall \epsilon>0, \exists \di_\infty\in \bbD_\infty$ such that $\normii{\bm{u}-\di_\infty}<\epsilon$}}.
\end{definition}

\subsubsection{Function classes and stationary points}
\label{sssec:func}

By design, a direct-search method can be applied to any function for which the 
objective value (or approximations thereof, see Section~\ref{sec:stoch}) can be 
queried. Still, in order to establish convergence \rev{and convergence rates}, one 
typically assumes a certain level of regularity in the objective function. For this 
reason, we \revn{assume} continuity of the objective function, though we note that 
\revn{direct-search methods have} \rev{also} been applied to extended-value and 
discontinuous problems~{\rdj \cite{AudBatKoj2022,bouchet2023theorie,custodio2012analysis}}. 
\revn{In this survey, we \revn{consider} both Lipschitz continuous functions or functions 
with Lipschitz continuous derivative, in the sense of the two 
definitions below.}

\begin{definition}
\label{def:FLip}
    A function \revn{$F: \R^n \rightarrow \R^{n'}$ with $n' \in \{1,n\}$}is said to be \emph{$L$-Lipschitz continuous} {\rdj for $L>0$} if
    \[
        \forall (\xb,\yb) \in \rn\times\rn, \quad 
        \|F(\xb)-F(\yb)\| \le L \|\xb-\yb\|.
    \]
\end{definition}

\begin{definition}
\label{def:FLsmooth}
    A function $f:\R^n \rightarrow \R$ is said to be \emph{$L$-smooth} for $L>0$ if 
    $f$ is continuously differentiable and $\nabla f:\R^n \rightarrow \R^{n}$ is 
    \revn{\emph{$L$-Lipschitz continuous.}}
\end{definition}

When optimizing a smooth function without constraints, direct-search methods 
generally aim at converging towards a point with zero gradient, called a 
\emph{stationary point}. We \revn{present} results of that form in the upcoming 
sections. Existing analyzes of direct-search methods also provide guarantees on 
nonsmooth functions using generalized derivatives. The next two definitions 
\rev{from Clarke differential calculus~\cite{Clar83a} correspond to the most 
classical notions of generalized derivatives} used in the direct-search literature.

\begin{definition}
\label{def:clarkeder}
	\rev{The \emph{Clarke directional derivative} of $f$ at $\xb \in \rn$ in the 
	direction $\vi \in \rn$} is
	\begin{equation}
		f^{\circ}(\xb;\, \vi) 
		= 
		\limsup_{\substack{\yb \rightarrow \xb \\ t \searrow 0}} 
		\frac{f(\yb + t\vi) - f(\yb)}{t} \, .
	\end{equation} 
    The \revn{\emph{Clarke subdifferential}} of $f$ at $\xb$ is then defined as
    \begin{equation}
    \label{eq:clarkesubdiff}
        \partial f(\xb) = \left\{ 
        \gb \in \R^n
        \ \middle|\ 
        \gb^\T \vi \le f^{\circ}(\xb;\, \vi)\quad \forall \vi \in \R^n \right\}.
    \end{equation}
\end{definition}

\begin{definition}
\label{def:clarkestatio}
	The point $\xb^*$ is \emph{Clarke stationary} for $f$ if 
	$f^\circ(\revn{\xb^*}{\bl;}\, \vi) \geq 0$ for every $\vi \in \mathbb{R}^n$ 
    or, equivalently, if $\vzero \in \partial f(\revn{ \xb^*})$.
\end{definition}

The notion of Clarke stationarity is weaker than many others used in variational 
analysis. \rev{As a result, it has been used extensively to derive guarantees for 
direct-search schemes~\cite{audet2002analysis}. More recently, an alternate notion 
of stationarity due to Goldstein~\cite{goldstein1977optimization} has been employed 
to obtain complexity results for direct-search methods.}

\begin{definition}
\label{def:goldsteinstatio}
    Given $\delta>0$, the point $\xb^*$ is {called} \emph{$\delta$-Goldstein 
    stationary} for $f$ if 
    \[
        \inf\left\{ \|\gb\|
        \ \middle|\ 
        \gb \in \mathrm{conv}\left(
        \cup_{\yb \in B_{\delta}(\revn{\xb^*})} \partial f(\yb)\right) \right\} = 0
    \]
    where $\mathrm{conv}\left(\mathcal{S}\right)$ denotes the set of convex 
    combinations of vectors in $\mathcal{S}$.
\end{definition}

Note that this survey \revn{also considers} approximate definitions of stationary 
points in order to derive complexity results. For instance, in the smooth setting,
rather than a point with zero gradient, we might be interested in converging 
towards a point with small enough gradient norm. In the nonsmooth setting, the 
notion of an approximate stationary point is more intricate, and we provide the 
definition below.

\begin{definition}
\label{def:approxgoldstein}
    Given $\delta>0$ and $\varepsilon>0$, a vector $\xb \in \R^n$ is called a 
    {\rdj \emph{$(\delta, \varepsilon)$-Goldstein stationary}} point for a function $f$ if
    \begin{equation}
    \label{ed:approxgoldstein}
	   \inf\left\{\n{\gb} 
	   \ \middle|\ 
	   \gb \in \text{conv} \cup_{\yb \in B_{\delta}(\xb)} \partial f(\yb) 
	   \right\} 
	   \le \varepsilon.
    \end{equation}
\end{definition}

The concept of approximate stationary points \revn{are also} 
discussed in Section~\ref{ssec:directional}.

\subsection{Basic convergence ingredients}
\label{ssec:basiccv}

In this section, we provide the main assumptions and technical lemmas 
that are used while analyzing a direct-search method. As in the rest 
of Section~\ref{sec:algosdef}, we focus on the unconstrained, 
deterministic setting, and provide results for both smooth and 
nonsmooth objectives. 

\subsubsection{Main assumptions}
\label{sssec:basicass}

We begin by listing the key assumptions that are made on the objective 
function. 

\begin{assumption} \label{ass:fbound}
    The objective $f$ is lower bounded, i.e., there exists 
    $f^* \in \R$ such that {\rdj $f(\xb) \geq  f^*$} $\forall \xb \in \R^n$.
\end{assumption}  

Assumption~\ref{ass:fbound} is typically used together with a guarantee 
of decrease at every iteration of the algorithm to establish convergence.

\begin{assumption} \label{ass:L0}
	The sublevel set 
	$L_0 := \{\xb \in \mathbb{R}^n \ | \ f(\xb) \leq f(\xb_0) \}$ is 
	bounded, where $\xb_0 \in \R^n$ is the \revn{initial} point of 
	Algorithm~\ref{alg:basicDS}.
\end{assumption}

Assumption~\ref{ass:L0} certifies that the iterates of 
Algorithm~\ref{alg:basicDS} remain in a bounded set, which is particularly 
helpful to guarantee convergence of a subsequence of the iterates.

In addition to assumptions on the objective function, an analysis of 
Algorithm~\ref{alg:basicDS} typically requires conditions on the polling 
directions, that are stated below.
\begin{assumption} \label{ass:dirlowup}
	For $\db \in \mathbb{D}_k$, $k \in \mathbb{N}_0${\rdj ,} we have 
	$\n{\db} \in [d_{\min}, d_{\max}]$ where $d_{\min}$ and $d_{\max}$ are positive 
	constants.
\end{assumption}
The upper bound ensures that the information obtained in unsuccessful steps is 
relevant for the directional derivative, while the lower bound ensures that the 
sufficient decrease condition can indeed be satisfied for good descent directions. 

The last classical assumption concerns the forcing function.
\begin{assumption} \label{ass:rho}
	The forcing function $\rho: \Rplus \rightarrow \Rplus$, is continuous, 
	\rev{nondecreasing and} $\rho(\ssize) = o(\ssize)$ as $\ssize \rightarrow 0$.
\end{assumption}
Note that using simple decrease (i.e., $\rho \equiv 0$) leads to immediate 
satisfaction of Assumption~\ref{ass:rho}. Popular choices for $\rho$ in direct 
search based on sufficient decrease include $\rho(\alpha)=c\,\alpha^q$ with $c>0$ 
and $q >1$.

\subsubsection{Key lemmas}
\label{sssec:basiclemmas}

A remarkable feature of direct-search schemes is that their analysis 
relies upon \emph{unsuccessful iterations}, at which the iterate does 
not change. On those iterations, one can relate a stationarity 
criterion of interest to the stepsize parameter. In a smooth setting, 
the following lemma is the first step towards establishing theoretical 
guarantees of a direct-search technique.

\begin{lemma} \label{l:re}
    Let $f$ be an $L$-smooth function, and consider that 
    Algorithm~\ref{alg:basicDS} is applied to the minimization of~$f$. 
    Suppose that the $k$th iteration of Algorithm~\ref{alg:basicDS} is 
    unsuccessful. Then, \rev{for any $\db \in \bbD_k$,}
	\begin{equation} \label{eq:deltak}
		- \scal{\nabla f(\xb_k)}{\db} 
		\le
		\ssize_k \frac{L\n{\db}^2}{2} + \frac{\rho(\ssize_k)}{\ssize_k} 
		\,.
	\end{equation}
\end{lemma}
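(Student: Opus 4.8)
The plan is to combine the purely algebraic consequence of an unsuccessful iteration with the quadratic upper bound that $L$-smoothness provides. Since iteration $k$ is declared unsuccessful, the sufficient-decrease test in Step~2 of Algorithm~\ref{alg:basicDS} must have failed for \emph{every} trial point $\xk + \ssize_k\db$ with $\db \in \bbD_k$ (recall that checking all poll points is precisely what is required to declare an iteration unsuccessful). Negating the strict inequality~\eqref{eq:basicDSdec} then yields the lower bound $f(\xk + \ssize_k\db) \ge f(\xk) - \rho(\ssize_k)$ for each such $\db$. This is the only place where the hypothesis on the iteration enters, and it holds uniformly over the poll set, which is exactly why the conclusion can be stated for an arbitrary $\db \in \bbD_k$.

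First I would fix an arbitrary $\db \in \bbD_k$ and rewrite the above as $f(\xk + \ssize_k\db) - f(\xk) \ge -\rho(\ssize_k)$. Next I would invoke the standard descent lemma, which follows from Definition~\ref{def:FLsmooth} ($\nabla f$ being $L$-Lipschitz) by integrating the gradient along the segment $[\xk,\, \xk + \ssize_k\db]$; this gives the upper bound
\[
    f(\xk + \ssize_k\db) - f(\xk) \le \ssize_k \scal{\nabla f(\xk)}{\db} + \frac{L}{2}\ssize_k^2 \n{\db}^2.
\]
Chaining the lower and upper bounds produces $-\rho(\ssize_k) \le \ssize_k \scal{\nabla f(\xk)}{\db} + \frac{L}{2}\ssize_k^2\n{\db}^2$. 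Rearranging to isolate the inner-product term and dividing through by $\ssize_k > 0$ (legitimate since the stepsize stays strictly positive under the updates $\ssize_{k+1} \in \{\theta\ssize_k, \gamma\ssize_k\}$ with $0<\theta<1\le\gamma$) yields precisely~\eqref{eq:deltak}.

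There is no genuine obstacle here: the argument is a two-line estimate once the descent lemma is available, and the only points requiring care are bookkeeping ones. Specifically, one must keep the sign conventions consistent when negating~\eqref{eq:basicDSdec} and when moving the inner-product term to the left, and it is worth observing that the division by $\ssize_k$ is what turns the $o(\ssize_k)$ behaviour of $\rho$ (Assumption~\ref{ass:rho}) into the term $\rho(\ssize_k)/\ssize_k$ that will later be driven to zero. If one prefers not to quote the descent lemma as a black box, the single extra step is to write $f(\xk + \ssize_k\db) - f(\xk) = \int_0^1 \ssize_k \scal{\nabla f(\xk + t\ssize_k\db)}{\db}\,dt$ and bound the integrand using the Lipschitz property of $\nabla f$ together with Cauchy--Schwarz.
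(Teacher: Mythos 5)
Your argument is correct and is essentially identical to the paper's own proof: both negate the sufficient-decrease test on the unsuccessful iteration to get $f(\xb_k+\alpha_k\db)-f(\xb_k)\ge-\rho(\alpha_k)$, combine it with the descent lemma for $L$-smooth functions, and divide by $\alpha_k>0$. The extra remarks (uniformity over the poll set, positivity of the stepsize, the integral derivation of the descent lemma) are fine but add nothing beyond what the paper already does.
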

\begin{proof}
	\rev{
	Since iteration $k$ is unsuccessful, for any $\db \in \bbD_k$, the decrease 
	condition is not satisfied, and thus 
	\begin{equation}
	\label{eq:nodec}
		f(\xb_k+\alpha_k \db)-f(\xb_k) \ge -\rho(\alpha_k).
	\end{equation}
	Since $f$ is $L$-smooth by assumption, {\rdj using for example~\cite[Lemma~9.4]{AuHa2017} yields} 
	\[
		f(\xb_k+\alpha_k \db) 
		\le 
		f(\xb_k) + \alpha_k\scal{\nabla f(\xb_k)}{\db} 
		+ \frac{L\alpha_k^2}{2}\n{\db}^2.
	\]
	Combining this inequality with~\eqref{eq:nodec} and dividing by $\alpha_k>0$ 
	yields the desired result.
	}
\end{proof}

Lemma~\ref{l:re} highlights the importance of choosing a good set of polling 
directions, so as to relate the left-hand side of~\eqref{eq:deltak} with 
a convergence criterion. The use of positive spanning sets is particularly 
suited in that regard, as it leads to the following result.

\begin{proposition} \label{prop:unsuccPSSsmooth}
    Let the assumptions of Lemma~\ref{l:re} hold{\rdj , in particular that the $k$th iteration of Algorithm~\ref{alg:basicDS} is 
    unsuccessful}. 
    Suppose further that Assumption~\ref{ass:dirlowup} holds, and that 
    $\bbD_k$ is a positive spanning set with cosine measure $\tau>0$. Then,
	\begin{equation}
    \label{eq:unsuccPSSsmooth}
		\n{\nabla f(\xb_k)} 
		\le 
		\frac{1}{\tau}\left[ \ssize_k \frac{L d_{\max}}{2} 
		+ \frac{\rho(\ssize_k)}{\ssize_k\,d_{\min}} \right] \, .
	\end{equation}
\end{proposition}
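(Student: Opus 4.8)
The plan is to combine Lemma~\ref{l:re}, which bounds $-\scal{\nabla f(\xb_k)}{\db}$ from above for \emph{every} $\db \in \bbD_k$, with Definition~\ref{def:cm}, which guarantees the existence of one \emph{particular} direction in $\bbD_k$ that aligns well with $-\nabla f(\xb_k)$. If $\nabla f(\xb_k)=\zerob$ the claimed inequality holds trivially, since its right-hand side is nonnegative; so I would assume $\nabla f(\xb_k)\neq\zerob$ at the outset.

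First I would instantiate the cosine measure at the vector $\vi=-\nabla f(\xb_k)$. Since $\bbD_k$ is a positive spanning set with $\textnormal{cm}(\bbD_k)=\tau$, there is some $\db\in\bbD_k$ with
\[
    \frac{\scal{\db}{-\nabla f(\xb_k)}}{\n{\db}\,\n{\nabla f(\xb_k)}}\ge\tau,
    \qquad\text{i.e.}\qquad
    -\scal{\nabla f(\xb_k)}{\db}\ge\tau\,\n{\db}\,\n{\nabla f(\xb_k)}.
\]
For this specific $\db$, I would then apply~\eqref{eq:deltak} to chain the two estimates:
\[
    \tau\,\n{\db}\,\n{\nabla f(\xb_k)}
    \le -\scal{\nabla f(\xb_k)}{\db}
    \le \ssize_k\frac{L\n{\db}^2}{2}+\frac{\rho(\ssize_k)}{\ssize_k}.
\]

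The remainder is purely algebraic. Dividing through by $\tau\n{\db}>0$ (valid because $\n{\db}\ge d_{\min}>0$ and $\tau>0$ by Assumption~\ref{ass:dirlowup}) yields
\[
    \n{\nabla f(\xb_k)}
    \le \frac{\ssize_k L\n{\db}}{2\tau}+\frac{\rho(\ssize_k)}{\ssize_k\,\tau\,\n{\db}},
\]
and I would then use $\n{\db}\le d_{\max}$ in the first term and $\n{\db}\ge d_{\min}$ in the second to recover~\eqref{eq:unsuccPSSsmooth}. The only points requiring care — the main, if modest, obstacle — are the two opposite uses of Assumption~\ref{ass:dirlowup}: the upper bound $d_{\max}$ must go where $\n{\db}$ appears in the numerator, while the lower bound $d_{\min}$ enlarges the fraction where $\n{\db}$ sits in the denominator. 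One must likewise invoke the cosine measure at $-\nabla f(\xb_k)$ rather than $\nabla f(\xb_k)$, so that the sign of the inner-product inequality matches the convention of Lemma~\ref{l:re}.
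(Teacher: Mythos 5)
Your proof is correct and follows exactly the argument the paper intends: instantiate the cosine measure at $-\nabla f(\xb_k)$ to extract a well-aligned direction, chain with the unsuccessful-iteration bound of Lemma~\ref{l:re}, and finish with the two opposite uses of Assumption~\ref{ass:dirlowup}. The handling of the degenerate case $\nabla f(\xb_k)=\zerob$ and the sign convention are both taken care of properly, so nothing is missing.
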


The result of Proposition~\ref{prop:unsuccPSSsmooth} can be used to 
guarantee convergence of the gradient norm to zero for a subsequence of 
unsuccessful iterates. We \revn{illustrate} such a result in 
Section~\ref{ssec:directional}.

\rev{
In absence of smoothness, the result of Lemma~\ref{l:re} no longer holds. However, 
one can still guarantee that 
\begin{equation} \label{eq:unsuccgen}
	\frac{f(\xb_k+\alpha_k \db)-f(\xb_k)}{\alpha_k}  \ge \revn{-}\frac{\rho(\ssize_k)}{\ssize_k}.
\end{equation}
for any $\db \in \bbD_k$.}
The left-hand side of~\eqref{eq:unsuccgen} resembles the formula of the Clarke 
derivative, and is used together with the notion of refining subsequence to yield 
asymptotic guarantees on the Clarke directional derivatives. We define the former 
concept below.

\begin{definition}
\label{def:refining}
    Consider a sequence of iterates $\{\xb_k\}_{k \in \N}$, a sequence of stepsize 
    parameters $\{\ssize_k\}_{k \in \N}$ and a sequence of polling sets 
    $\{\bbD_k\}_{k \in \N}$ associated with a direct-search algorithm. Let 
    $\mathcal{K} \subset \N$ be an {\rdj infinite} index subsequence such that the $k$th iteration 
    is unsuccessful for every $k \in \mathcal{K}$ and the subsequence 
    $\{\xb_k\}_{k \in \mathcal{K}}$ converges towards $\revn{ \hat{\xb}} \in \R^n$. 
    \begin{itemize}
        \item The subsequence $\{\xb_k\}_{k \in \mathcal{K}}$ is called a 
        \emph{refining subsequence} if  $\lim_{k \in \mathcal{K}} \alpha_k = 0$, in 
        which case ${\bll \hat{\xb}}$ is called a refined point.
        \item If $\{\xb_k\}_{k \in \mathcal{K}}$ is a refining subsequence, a 
        direction $\db \in \R^n$ is called a \emph{refining direction} if 
        there exists an infinite subset ${\rdj \mathcal{L}} \subset \mathcal{K}$ and a sequence 
        $\{\db_{\rev{\ell}}\}_{\rev{\ell} \in \mathcal{L}}$ such that 
        $\db_{\rev{\ell}} \in \bbD_{\rev{\ell}}$ for any 
        $\rev{\ell} \in \mathcal{L}$ and\\ 
        $\lim_{\rev{\ell} \in \revn{\mathcal{L}}} \tfrac{\db_{\rev{\ell}}}{\|\db_{\rev{\ell}}\|} 
        = \tfrac{\db}{\|\db\|}$.
    \end{itemize}
\end{definition}

By reasoning on refining subsequences, one can directly establish asymptotic results on (Clarke) directional derivatives. This property is crucial in analyzing algorithms based on simple decrease such as those described in 
Section~\ref{ssec:meshbased}.

\begin{lemma} \label{l:cvrefineddir}
    Suppose that $f$ is Lipschitz continuous, and consider that 
    Algorithm~\ref{alg:basicDS} is applied to the minimization of~$f$ with 
    a forcing function that satisfies Assumption~\ref{ass:rho}. 
    Let $\mathcal{K} \subset \N$ be an \revn{infinite} index subsequence such that 
    $\{\xb_k\}_{k \in \mathcal{K}}$ is a refining subsequence with refined point 
    $\revn{\hat{\xb}}$. Then, for any refining direction $\db$, it holds that
	\begin{equation}
    \label{eq:cvrefineddir}
		f^\circ(\revn{\hat{\xb}}; \db) \geq 0 \, .
	\end{equation}
\end{lemma}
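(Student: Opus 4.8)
The plan is to pass to the limit in the non-decrease inequality~\eqref{eq:unsuccgen} along the subsequence that defines the refining direction, using Lipschitz continuity to absorb the discrepancy between the (varying) poll directions and the fixed limiting direction.

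First I would fix the data of the refining direction: let $\mathcal{L} \subset \mathcal{K}$ be the infinite index set and $\{\db_\ell\}_{\ell \in \mathcal{L}}$ the associated directions from Definition~\ref{def:refining}, so that $\db_\ell \in \bbD_\ell$ and $\db_\ell/\n{\db_\ell} \to \db/\n{\db}$. Write $\ub := \db/\n{\db}$ and $\ub_\ell := \db_\ell/\n{\db_\ell}$, and set $t_\ell := \alpha_\ell \n{\db_\ell}$. Since $\{\xb_k\}_{k\in\mathcal{K}}$ is refining we have $\alpha_\ell \to 0$, and together with the bounds $\n{\db_\ell}\in[d_{\min},d_{\max}]$ (Assumption~\ref{ass:dirlowup}) this gives $t_\ell>0$ and $t_\ell \searrow 0$.

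Next I would rewrite the key inequality. Because the iteration $\ell$ is unsuccessful, \eqref{eq:unsuccgen} applied to $\db_\ell$ together with $\alpha_\ell\db_\ell = t_\ell\ub_\ell$ yields
\[
\frac{f(\xb_\ell + t_\ell\ub_\ell)-f(\xb_\ell)}{t_\ell} \ \ge\ -\frac{1}{\n{\db_\ell}}\,\frac{\rho(\alpha_\ell)}{\alpha_\ell}.
\]
Let $L>0$ denote a Lipschitz constant of $f$. Using $\abs{f(\xb_\ell+t_\ell\ub)-f(\xb_\ell+t_\ell\ub_\ell)} \le L\,t_\ell\,\n{\ub-\ub_\ell}$ to replace the varying direction $\ub_\ell$ by the fixed one $\ub$, I obtain
\[
\frac{f(\xb_\ell + t_\ell\ub)-f(\xb_\ell)}{t_\ell} \ \ge\ -\frac{1}{\n{\db_\ell}}\,\frac{\rho(\alpha_\ell)}{\alpha_\ell} - L\,\n{\ub-\ub_\ell}.
\]

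Finally I would take the limit over $\ell \in \mathcal{L}$. The right-hand side tends to $0$: the factor $1/\n{\db_\ell}$ is bounded by $1/d_{\min}$, the ratio $\rho(\alpha_\ell)/\alpha_\ell \to 0$ by Assumption~\ref{ass:rho}, and $\n{\ub-\ub_\ell}\to 0$ by construction. On the left-hand side, $\xb_\ell \to \hat{\xb}$ and $t_\ell\searrow 0$, so the difference quotients are exactly of the type admissible in the $\limsup$ defining the Clarke directional derivative (Definition~\ref{def:clarkeder}); hence $f^\circ(\hat{\xb};\ub)$ is an upper bound for their $\limsup$, which is nonnegative. This gives $f^\circ(\hat{\xb};\ub)\ge 0$, and positive homogeneity of $\vi\mapsto f^\circ(\hat{\xb};\vi)$ yields $f^\circ(\hat{\xb};\db)=\n{\db}\,f^\circ(\hat{\xb};\ub)\ge 0$. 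The main obstacle is precisely the mismatch between the poll directions $\db_\ell$, of which only the normalizations converge, and the fixed direction $\db$ entering the Clarke derivative; the Lipschitz assumption is what makes this swap harmless, while the two-sided bound on $\n{\db_\ell}$ is what guarantees both $t_\ell\searrow 0$ and the boundedness of the forcing term.
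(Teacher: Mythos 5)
Your proof is correct and follows essentially the same route as the paper's: apply the non-decrease inequality~\eqref{eq:unsuccgen} along the subsequence defining the refining direction and pass to the limit in the Clarke difference quotient. Your version is in fact more careful than the paper's, since you explicitly use Lipschitz continuity to swap $\db_\ell$ for the fixed limiting direction and invoke the bounds of Assumption~\ref{ass:dirlowup} on $\n{\db_\ell}$ (a hypothesis the lemma statement omits but which both arguments implicitly need), steps the paper's proof glosses over.
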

\begin{proof}
	\rev{
	Let $\{\db_{\ell}\}_{\ell \in \mathcal{L}}$ be a sequence of directions such that 
	$\db_{\ell} \in \bbD_{\ell}$ for every $\ell \in \mathcal{L}$ 
	and $\lim_{\ell \in \mathcal{L}} \frac{\db_{\ell}}{\|\db_{\ell}\|} = 
	\frac{\db}{\|\db\|}$. 
	Without loss of generality, suppose that all iterations in $\mathcal{L}$ denote 
	unsuccessful iterations. Applying~\eqref{eq:unsuccgen} at iteration 
	$\ell \in \mathcal{L}$ to the direction $\db_{\ell}$ gives
	\[
		\frac{f(\xb_{\ell}+\alpha_{\ell}\db_{\ell})-f(\xb_{\ell})}{\alpha_{\ell}} 
		\ge 
		-\frac{\rho(\alpha_{\ell})}{\alpha_{\ell}}.
	\]
	Taking the limit on both sides as $\ell$ goes to infinity, we have 
	$\db_{\ell} \rightarrow \db^*$, 
	$\alpha_{\ell} \rightarrow 0$ as well as $\xb_{\ell} \rightarrow {\revn{\hat{\xb}}}$. 
	It follows that
	\[
		f^\circ(\revn{ \hat{\xb} ;}\, \db) \ge \lim_{\ell \rightarrow \infty} 
		\frac{f(\xb_{\ell}+\alpha_{\ell}\db_{\ell})-f(\xb_{\ell})}{\alpha_{\ell}} 
		\ge 0.
	\]
	}
\end{proof}

This lemma {\rdj is} used in the subsequent analyzes of 
\revn{direct-search algorithms} in the nonsmooth setting.

\section{Algorithms with guarantees for unconstrained optimization}
\label{sec:unc}

\rev{In this section, we review the main instances of Algorithm~\ref{alg:basicDS} 
that possess theoretical guarantees. We begin by reviewing directional algorithms, 
arguably the simplest class of such instances. We then present a simple mesh-based 
algorithm, and present its convergence guarantees in the unconstrained setting, 
that follow from standard analyzes of MADS-type algorithms. Finally, we review the 
theory behind line-search algorithms.}

\subsection{Directional algorithms}
\label{ssec:directional}

\rev{Directional direct-search methods directly follow the template of 
Algorithm~\ref{alg:basicDS}. Those schemes differ in their choice of polling 
directions, polling strategy, and forcing function. Under standard 
assumptions, a directional method generally comes with global convergence 
guarantees~\cite{KoLeTo03a}. Variants with sufficient decrease are also amenable 
to a complexity analysis~\cite{Vicente2013W}, and we stress out that sufficient 
decrease is currently the only paradigm in which complexity guarantees have been 
obtained.
} 

\rev{We first review the main results for deterministic directional 
\revn{direct-search methods} on smooth problems in 
Section~\ref{sssec:directionaldetermsmooth}. We then describe the corresponding 
results for probabilistic directional \revn{direct-search schemes} in 
Section~\ref{sssec:directionalrandomsmooth}. 
Finally, we cover existing guarantees for deterministic methods on nonsmooth 
problems in Section~\ref{sssec:directionaldetermnonsmooth}.}

\subsubsection{Deterministic convergence results in the smooth case}
\label{sssec:directionaldetermsmooth}

We begin the analysis of Algorithm~\ref{alg:basicDS} by focusing on the smooth 
setting. In that context, the polling sets are typically required to be positive 
spanning sets with positive cosine measure. We formalize this requirement through 
the following assumption.

\begin{assumption}
\label{ass:cmDk}
    The polling sets $\{\bbD_k\}$ used in Algorithm~\ref{alg:basicDS} satisfy 
    $\cm(\bbD_k) \ge \tau$ for every $k \in \N$, where $\tau>0$.
\end{assumption}

A direct application of Proposition~\ref{prop:unsuccPSSsmooth} under this 
assumption yields the following convergence result.

\begin{theorem}  \label{th:liminfdirds}
    Suppose that Algorithm~\ref{alg:basicDS} is applied to an $L$-smooth function 
    $f$ under Assumptions~\ref{ass:fbound} and~\ref{ass:rho}. \rev{Suppose also 
    that the sequence $\{\bbD_k\}$ satisfies Assumptions~\ref{ass:dirlowup} 
    and~\ref{ass:cmDk}.}
    \rev{Let $\mathcal{K}$ be a index set of unsuccessful iterations such that 
    $\lim_{k\in \mathcal{K}} \alpha_k \rightarrow 0$. Then,}
	\begin{equation}
    \label{eq:liminfdirds}
		\lim_{k \in \mathcal{K}}\n{\nabla f(\xb_k)} = 0.
	\end{equation}
\end{theorem}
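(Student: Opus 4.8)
The plan is to derive the conclusion directly from Proposition~\ref{prop:unsuccPSSsmooth}, applied at each iteration of the index set $\mathcal{K}$, followed by a limiting argument. Every $k\in\mathcal{K}$ is unsuccessful, and by Assumptions~\ref{ass:dirlowup} and~\ref{ass:cmDk} the polling set $\bbD_k$ is a positive spanning set whose cosine measure is bounded below by $\tau>0$; thus the hypotheses of Proposition~\ref{prop:unsuccPSSsmooth} hold for each such $k$.

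First I would apply Proposition~\ref{prop:unsuccPSSsmooth} and use that the right-hand side of~\eqref{eq:unsuccPSSsmooth} is monotonically decreasing in the cosine measure, so that the lower bound $\cm(\bbD_k)\ge\tau$ may be substituted to obtain, uniformly over $k\in\mathcal{K}$,
\[
\n{\nabla f(\xb_k)} \le \frac{1}{\tau}\left[\alpha_k\frac{L d_{\max}}{2} + \frac{\rho(\alpha_k)}{\alpha_k d_{\min}}\right].
\]
Next I would let $k\to\infty$ along $\mathcal{K}$. The term $\alpha_k L d_{\max}/2$ vanishes since $\alpha_k\to 0$ while $L$ and $d_{\max}$ are fixed constants; the term $\rho(\alpha_k)/(\alpha_k d_{\min})$ vanishes because Assumption~\ref{ass:rho} imposes $\rho(\alpha)=o(\alpha)$ as $\alpha\to 0$, which forces $\rho(\alpha_k)/\alpha_k\to 0$. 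Hence the bracketed right-hand side tends to $0$, and since $\n{\nabla f(\xb_k)}\ge 0$, a squeeze argument yields $\lim_{k\in\mathcal{K}}\n{\nabla f(\xb_k)}=0$, which is~\eqref{eq:liminfdirds}.

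I do not anticipate a genuine obstacle; the argument is essentially a limit computation built on Proposition~\ref{prop:unsuccPSSsmooth}. The only points demanding care are the uniform use of the constant $\tau$ (justified by Assumption~\ref{ass:cmDk} together with the monotonicity of the bound in the cosine measure) and the passage $\rho(\alpha_k)/\alpha_k\to 0$ extracted from the little-$o$ hypothesis. I would also remark that the statement is conditional on the existence of an index set $\mathcal{K}$ of unsuccessful iterations with $\alpha_k\to 0$: it is Assumption~\ref{ass:fbound}, combined with the stepsize-decrease rule of Algorithm~\ref{alg:basicDS}, that ensures such a subsequence exists, so that the conclusion is not vacuous.
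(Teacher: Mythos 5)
Your proposal is correct and follows essentially the same route as the paper's proof: apply Proposition~\ref{prop:unsuccPSSsmooth} at each unsuccessful iteration in $\mathcal{K}$ and pass to the limit using $\alpha_k\to 0$ together with $\rho(\alpha_k)/\alpha_k\to 0$ from Assumption~\ref{ass:rho}. Your closing remark about why such a subsequence $\mathcal{K}$ exists (via Assumption~\ref{ass:fbound} and the stepsize update) is a sensible observation that the paper defers to the surrounding discussion rather than the proof itself.
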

{\rdj
\begin{proof}
All the assumptions made in Proposition~\ref{prop:unsuccPSSsmooth} hold whence taking the limit when $k\to\infty, k\in \mathcal{K}$ in~\eqref{eq:unsuccPSSsmooth} yields~\eqref{eq:liminfdirds} since $\rho(\alpha_k)/\alpha_k\to 0$ per Assumption~\ref{ass:rho}.
\end{proof}
}

Theorem~\ref{th:liminfdirds} is a weak convergence result, in that it only 
guarantees that a subsequence of gradient norms is going to zero provided there 
exists a subsequence of stepsizes that converges to $0$. Note that neither this 
result nor that of Lemma~\ref{l:re} do  leverage the use of sufficient decrease. 
A first step towards improving the results of Theorem~\ref{th:liminfdirds} 
consists {\rdj of} exploiting sufficient decrease to guarantee that the entire sequence 
of stepsizes converges to $0$.

\begin{lemma}
\label{l:cvalphasuffdec}
    Suppose that Algorithm~\ref{alg:basicDS} is applied to an $L$-smooth function 
    $f$ under {\rdj Assumptions~\ref{ass:fbound}, \ref{ass:dirlowup} and~\ref{ass:rho}}. 
    Suppose further that the algorithm uses sufficient decrease. Then, 
    $\lim_{k \rightarrow \infty} \alpha_k = 0$.
\end{lemma}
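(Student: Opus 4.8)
The plan is to exploit the sufficient decrease achieved at successful iterations together with the lower boundedness of $f$ (Assumption~\ref{ass:fbound}) to control the total accumulated decrease, and then to transfer the resulting convergence from the subsequence of successful iterations to the whole stepsize sequence using the update rule. I note in passing that neither $L$-smoothness nor Assumption~\ref{ass:dirlowup} is actually needed here: the conclusion relies only on lower boundedness and on the structure of the stepsize updates together with Assumption~\ref{ass:rho}.

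First I would let $\mathcal{S} \subseteq \N$ denote the set of successful iterations. On an unsuccessful iteration the incumbent, and hence $f(\xk)$, is unchanged, whereas on each $k \in \mathcal{S}$ the success condition~\eqref{eq:basicDSdec} guarantees $f(\xkun) \le f(\xk) - \rho(\alpha_k)$. Telescoping the (monotonically nonincreasing) objective values up to iteration $N$ and using $f \ge f^*$ yields, after letting $N \to \infty$,
\[
	\sum_{k \in \mathcal{S}} \rho(\alpha_k) \le f(\xb_0) - f^* < \infty.
\]

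Next I would split into two cases. If $\mathcal{S}$ is finite, then beyond the last successful iteration every step is unsuccessful, so $\alpha_{k+1} = \theta \alpha_k$ with $\theta < 1$, and the stepsizes decrease geometrically to zero. If $\mathcal{S}$ is infinite, the convergence of the series above forces $\rho(\alpha_k) \to 0$ along $\mathcal{S}$. Invoking that $\rho$ is nondecreasing with $\rho(\alpha) > 0$ for $\alpha > 0$ under sufficient decrease (Assumption~\ref{ass:rho}), I would argue by contradiction that $\alpha_k \to 0$ along $\mathcal{S}$: otherwise some subsequence stays bounded below by $\epsilon > 0$, whence $\rho(\alpha_k) \ge \rho(\epsilon) > 0$ along that subsequence, contradicting $\rho(\alpha_k) \to 0$.

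The main obstacle is the final step, upgrading convergence along the successful subsequence to the full sequence, since the stepsize may be inflated by the factor $\gamma \ge 1$ at successful iterations. I would handle this by observing that, for any index $k$, if $s(k)$ denotes the largest successful index not exceeding $k$, then between $s(k)$ and $k$ the stepsize is multiplied once by $\gamma$ and thereafter only by factors $\theta < 1$, so that $\alpha_k \le \gamma \alpha_{s(k)}$ (with equality trivially when $k$ is itself successful). Since $\mathcal{S}$ is infinite, $s(k) \to \infty$ as $k \to \infty$, and the previous step gives $\alpha_{s(k)} \to 0$; hence $\alpha_k \le \gamma \alpha_{s(k)} \to 0$, which completes the proof.
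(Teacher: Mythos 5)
Your proof is correct; the paper itself states Lemma~\ref{l:cvalphasuffdec} without proof, and your argument is precisely the standard one from the literature it implicitly defers to (telescoping the sufficient decreases over successful iterations against the lower bound $f^*$, handling the finite/infinite success cases separately, and propagating $\alpha_{s(k)} \to 0$ to all $k$ via $\alpha_k \le \gamma\,\alpha_{s(k)}$). Your side remark is also accurate: neither $L$-smoothness nor Assumption~\ref{ass:dirlowup} is used anywhere in this argument, which needs only Assumption~\ref{ass:fbound}, the stepsize update rule, and the monotonicity and positivity of $\rho$ under sufficient decrease.
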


Combining Lemma~\ref{l:cvalphasuffdec} with Theorem~\ref{th:liminfdirds} does not 
guarantee that $\lim_{k \rightarrow \infty} \n{\nabla f(\xb_k)}=0$, since the 
result of Theorem~\ref{th:liminfdirds} only concerns subsequences of unsuccessful 
iterations. \rev{A popular approach to obtain these results requires complete 
polling to be performed~\cite{KoLeTo03a,CoScVibook}, which corresponds to our 
assumptions in Theorem~\ref{th:lipgglobal} below. Note that similar results can 
also be obtained in the context of locally strongly convex 
objectives~\cite{KoLeTo03a,DoLeTo03a}, or in the context of line-search 
methods~\cite{lucidi2002global}, to be discussed in Section \ref{ssec:linesearch}.}

\begin{theorem} \label{th:lipgglobal}
    Let the assumptions of Lemma~\ref{l:cvalphasuffdec} hold. In addition, 
    suppose that Algorithm~\ref{alg:basicDS} is applied using \rev{complete 
    polling} and a sequence of polling sets satisfying Assumption~\ref{ass:cmDk}. 
    Then,
    \begin{equation*}
        \lim_{k \rightarrow \infty} \n{\nabla f(\xb_k)} = 0.
    \end{equation*} 
\end{theorem}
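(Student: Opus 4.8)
The plan is to argue by contradiction, leveraging the two facts already in hand: the \emph{full} stepsize sequence vanishes ($\lim_{k\to\infty}\alpha_k=0$ by Lemma~\ref{l:cvalphasuffdec}) and the stationarity estimate of Proposition~\ref{prop:unsuccPSSsmooth} holds at every unsuccessful iteration. First I would record preliminaries. Since $f$ is nonincreasing along the iterates and bounded below by Assumption~\ref{ass:fbound}, the sequence $\{f(\xb_k)\}$ converges to some $f_\infty\in\R$, so that $f(\xb_p)-f(\xb_q)\to 0$ as $p,q\to\infty$ with $p\le q$. Because $\alpha_k\to0$ while successful iterations multiply the stepsize by $\gamma\ge1$, there must be infinitely many unsuccessful iterations; applying Theorem~\ref{th:liminfdirds} to the index set of unsuccessful iterations (along which $\alpha_k\to0$) gives $\liminf_{k\to\infty}\n{\nabla f(\xb_k)}=0$.

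Suppose, for contradiction, that $\limsup_{k\to\infty}\n{\nabla f(\xb_k)}\ge 2\eps$ for some $\eps>0$. Using the liminf above, I would build the usual ``bridge'' index sequences $\{l_i\}$ and $\{u_i\}$ with $l_i<u_i<l_{i+1}$ and $l_i\to\infty$, such that $\n{\nabla f(\xb_{l_i})}\ge 2\eps$, $\n{\nabla f(\xb_{u_i})}<\eps$, and $\n{\nabla f(\xb_k)}\ge\eps$ for every $l_i\le k<u_i$. The first key observation is that, for $i$ large, every iteration inside a bridge is \emph{successful}. Indeed, Proposition~\ref{prop:unsuccPSSsmooth} bounds $\n{\nabla f(\xb_k)}$ at an unsuccessful iteration by $\tfrac1\tau\big[\alpha_k\tfrac{Ld_{\max}}2+\tfrac{\rho(\alpha_k)}{\alpha_k d_{\min}}\big]$, which tends to $0$ as $k\to\infty$ (using $\alpha_k\to0$ and $\rho(\alpha_k)/\alpha_k\to0$ from Assumption~\ref{ass:rho}); hence there is $K_0$ with $\n{\nabla f(\xb_k)}<\eps$ at every unsuccessful $k\ge K_0$, so no bridge iteration with $l_i\ge K_0$ can be unsuccessful.

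The decisive step, and the place where complete polling is used, is to turn each successful bridge iteration into a decrease proportional to the stepsize. Since $\bbD_k$ has cosine measure at least $\tau$, there is $\db^*\in\bbD_k$ with $\scal{\nabla f(\xb_k)}{\db^*}\le -\tau\n{\db^*}\n{\nabla f(\xb_k)}$; complete polling evaluates all trial points and retains the best, so $f(\xb_{k+1})\le f(\xb_k+\alpha_k\db^*)$, and $L$-smoothness together with Assumption~\ref{ass:dirlowup} gives $f(\xb_k)-f(\xb_{k+1})\ge \alpha_k\tau d_{\min}\eps-\tfrac{L}{2}\alpha_k^2 d_{\max}^2$. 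As $\alpha_k\to0$, for $k$ large the right-hand side is at least $c\,\alpha_k$ with $c:=\tfrac12\tau d_{\min}\eps>0$. Consequently $\alpha_k\le c^{-1}\big(f(\xb_k)-f(\xb_{k+1})\big)$ on large bridge iterations, and since each step has length $\n{\xb_{k+1}-\xb_k}=\alpha_k\n{\db_k}\le d_{\max}\alpha_k$, summing over a bridge yields $\n{\xb_{u_i}-\xb_{l_i}}\le \sum_{k=l_i}^{u_i-1}\alpha_k d_{\max}\le \tfrac{d_{\max}}{c}\big(f(\xb_{l_i})-f(\xb_{u_i})\big)$.

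Finally I would close the contradiction with the Lipschitz continuity of $\nabla f$: from $\n{\nabla f(\xb_{l_i})}\ge2\eps$ and $\n{\nabla f(\xb_{u_i})}<\eps$ we get $\eps\le\n{\nabla f(\xb_{l_i})-\nabla f(\xb_{u_i})}\le L\n{\xb_{l_i}-\xb_{u_i}}$, hence $\n{\xb_{u_i}-\xb_{l_i}}\ge\eps/L$. Combined with the previous bound this forces $f(\xb_{l_i})-f(\xb_{u_i})\ge \tfrac{c\eps}{Ld_{\max}}>0$ for all large $i$, contradicting $f(\xb_{l_i})-f(\xb_{u_i})\to0$. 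I expect the main obstacle to be precisely the decisive step: the unsuccessful-iteration machinery only controls $\alpha_k$, not the per-iteration decrease, so without complete polling a successful step could follow a nearly $\nabla f$-orthogonal direction and guarantee only a decrease of $\rho(\alpha_k)=o(\alpha_k)$, which is too weak to telescope against the step lengths. It is the best-direction guarantee afforded by complete polling that restores a decrease of order $\alpha_k$ and makes the telescoping argument go through.
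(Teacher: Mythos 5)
The paper states Theorem~\ref{th:lipgglobal} without proof, deferring to \cite{KoLeTo03a,CoScVibook}; your argument is precisely the classical bridge/telescoping proof from those references, and it is correct (the vanishing of the whole stepsize sequence, the successfulness of all bridge iterations via Proposition~\ref{prop:unsuccPSSsmooth}, the order-$\alpha_k$ decrease from the best-aligned direction, and the Lipschitz-gradient contradiction are all assembled as intended). The only point worth flagging is that you read ``complete polling'' as also implying that the accepted point is at least as good as the poll point along the best-aligned direction, so that $f(\xb_{k+1})\le f(\xb_k+\alpha_k\db^*)$; the paper's definition literally only requires that all poll points be evaluated, and without this best-point acceptance rule the guaranteed decrease at a successful iteration degrades to $\rho(\alpha_k)=o(\alpha_k)$ and the telescoping step fails, exactly as you observe. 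Since best-point acceptance is the standard convention attached to complete polling in the cited literature, your proof matches the intended argument; it would just be worth making that acceptance rule explicit as a hypothesis.
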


\paragraph{Complexity guarantees} In addition to global convergence results, the 
sufficient decrease condition also endows direct-search schemes with complexity 
bounds, or non-asymptotic results, that quantify how many iterations or function 
evaluations are needed in the worst case to reach an approximate optimality 
criterion, such as $\|\nabla f(\xb_k)\| \le \epsilon$ (for nonconvex objectives) or 
$f(\xb_k)-\min_{\xb \in \R^n} f(\xb) \le \epsilon$ (for convex and strongly convex 
objectives). \revn{The first bounds of this nature for direct-search schemes} were 
established by Vicente~\cite{Vicente2013W} in the nonconvex setting, and later 
extended to the convex and strongly convex setting by Dodangeh and 
Vicente~\cite{dodangeh2016worst}. A unified and simplified argument was presented 
in \cite{konevcny2014simple}, assuming however no expansion in successful steps and 
a suitable initialization of the algorithm. 

{\rdj One thing} these results {\rdj have in common} is that they exhibit additional dependencies on 
the problem dimension compared to those for derivative-based algorithms, a common 
trait among derivative-free algorithms~\cite{CCartis_NIMGould_PhLToint_2022}. 
\rev{In this survey, we focus on analyzing the nonconvex setting, and give the 
corresponding result below.}

\begin{theorem}
\label{t:n2complexity}
    Let the assumptions of Theorem~\ref{th:lipgglobal} hold. 
    Suppose further that $\rho(\alpha) = c\,\alpha^2$ for some $c > 0$, 
    and that $|\bbD_k| \leq m$ for 
    every $k \in \mathbb{N}$. Then, for any 
    $\epsilon>0$, \rev{Algorithm~\ref{alg:basicDS} reaches an iterate 
    $\xb_k$ such that $\|\nabla f(\xb_k)\| \le \epsilon$ in at most 
    \begin{equation}
    \label{eq:n2complexity}
    	\cO\left(\tau^{-2}\epsilon^{-2}\right)
    	\ \mbox{iterations} 
    	\quad \mbox{and} \quad
    	\cO\left(m\tau^{-2}\epsilon^{-2}\right)
    	\ \mbox{function evaluations},
    \end{equation}
    where $\tau$ is the bound on the cosine measures given by 
    Assumption~\ref{ass:cmDk}.
    }
\end{theorem}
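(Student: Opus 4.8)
The plan is to partition the iterations into successful and unsuccessful ones and to bound each class separately, using the stationarity estimate of Proposition~\ref{prop:unsuccPSSsmooth} to control the stepsize and the sufficient-decrease condition to control the objective. Let $K$ be the first index with $\n{\nabla f(\xb_K)} \le \epsilon$; it suffices to bound $K$ (the argument below simultaneously shows such a $K$ exists), so I assume $\n{\nabla f(\xb_k)} > \epsilon$ for all $k < K$.

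\emph{Stepsize lower bound.} Specializing Proposition~\ref{prop:unsuccPSSsmooth} to $\rho(\alpha)=c\,\alpha^2$ gives, at any unsuccessful iteration, $\n{\nabla f(\xb_k)} \le \tfrac{\alpha_k}{\tau}\left(\tfrac{L d_{\max}}{2} + \tfrac{c}{d_{\min}}\right)$. Setting $C := \tfrac{L d_{\max}}{2} + \tfrac{c}{d_{\min}}$, every unsuccessful iteration $k < K$ therefore satisfies $\alpha_k > \tau\epsilon/C$. Since the stepsize is multiplied by $\theta < 1$ only at unsuccessful iterations and by $\gamma \ge 1$ otherwise, a short induction yields the uniform lower bound $\alpha_k \ge \alpha_{\min} := \theta\tau\epsilon/C$ for all $k \le K$ (for $\epsilon$ small enough that $\alpha_0 \ge \alpha_{\min}$). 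The key point is that the stepsize cannot collapse while the gradient remains above $\epsilon$.

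\emph{Counting the two iteration types.} On each successful iteration the decrease condition gives $f(\xb_k)-f(\xb_{k+1}) \ge c\,\alpha_k^2 \ge c\,\alpha_{\min}^2$; telescoping over the successful iterations before $K$ and invoking the lower bound $f \ge f^*$ of Assumption~\ref{ass:fbound} bounds their number $|S|$ by $(f(\xb_0)-f^*)/(c\,\alpha_{\min}^2) = \cO(\tau^{-2}\epsilon^{-2})$. For the unsuccessful iterations, I would use the multiplicative dynamics $\alpha_K = \alpha_0\,\gamma^{|S|}\theta^{|U|}$: taking logarithms and using $\alpha_K \ge \alpha_{\min}$ together with $\log\theta < 0$ gives $|U| \le \big(\log(\alpha_0/\alpha_{\min}) + |S|\log\gamma\big)/(-\log\theta)$, in which $|S|\log\gamma = \cO(\tau^{-2}\epsilon^{-2})$ dominates the logarithmic term $\log(\alpha_0/\alpha_{\min}) = \cO(\log(1/(\tau\epsilon)))$, so $|U| = \cO(\tau^{-2}\epsilon^{-2})$ as well.

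Summing the two counts gives $K = |S| + |U| = \cO(\tau^{-2}\epsilon^{-2})$ iterations, and since complete polling evaluates at most $|\bbD_k| \le m$ points per iteration, the total number of function evaluations is at most $m K = \cO(m\,\tau^{-2}\epsilon^{-2})$. The main obstacle is the bound on unsuccessful iterations: Proposition~\ref{prop:unsuccPSSsmooth} only furnishes information at unsuccessful steps, so there is no direct decrease to telescope there; instead one must convert the count of successful iterations into a count of unsuccessful ones through the geometric stepsize update, with the uniform stepsize lower bound serving as the linchpin that keeps both counts finite.
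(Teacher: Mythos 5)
Your proof is correct and is essentially the standard argument of Vicente that the paper cites for this result (the survey states Theorem~\ref{t:n2complexity} without an explicit proof): a stepsize lower bound $\alpha_k \ge \min\{\alpha_0,\theta\tau\epsilon/C\}$ from Proposition~\ref{prop:unsuccPSSsmooth}, a telescoping bound on successful iterations via sufficient decrease and Assumption~\ref{ass:fbound}, and a conversion of that count into a bound on unsuccessful iterations through the multiplicative stepsize dynamics. All steps check out, including the explicit handling of the $\gamma^{|S|}\theta^{|U|}$ bookkeeping and the factor $m$ for complete polling in the evaluation count.
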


As stated, the result of Theorem~\ref{t:n2complexity} does not exhibit explicit 
dependencies on the problem dimension. However, if one aims at minimizing the 
complexity bounds as functions of $m$ and $\tau$, it has been shown that the best 
choice consists {\rdj of} selecting $\bbD_k$ as an orthonormal basis and its negative, in 
which case the results hold with $m=2\,n$ and 
$\tau=\tfrac{1}{\sqrt{n}}$~ (see, e.g., \cite{dodangeh2016optimal,vicente2013worst}). As a result, the iteration 
complexity bounds exhibit a dependency in $n$, while the evaluation complexity 
bounds have a dependency in $n^2$. In the next section, we \revn{describe} 
variants of directional \revn{direct-search algorithms} that \rev{attempt} to 
reduce that dependency through randomization.

\begin{remark}
\label{rem:dirdscvx}
	\rev{
	Under convexity assumptions on $f$, it is possible to improve the 
	dependency on $\epsilon$ from $\epsilon^{-2}$ to~$\epsilon^{-1}$ in 
	the convex case and $\log(\epsilon^{-1})$ in the strongly convex 
	case, thereby following similar improvements in the complexity of 
	gradient descent techniques~\cite{dodangeh2016worst}. Note however 
	that the dependencies on $m$ and $\tau$, which are intrinsic to the 
	use of a direct-search scheme, are identical to that in the 
	nonconvex setting.
	}
\end{remark}

\subsubsection{\rev{Probabilistic convergence results in the smooth case}}
\label{sssec:directionalrandomsmooth}

An important subclass of directional \revn{direct-search algorithms}, that has 
grown in importance over the past decade, \rev{relies on assuming \emph{probabilistic 
properties} of the polling sets rather than relying on deterministic properties 
such as Assumption~\ref{ass:cmDk}. The associated schemes} employ a 
small number of \rev{randomly chosen} directions to effectively reduce the 
number of points considered at every iteration \rev{compared to a deterministic 
approach from $\mathcal{O}(1)$ to $\mathcal{O}(n)$. We note that such randomized 
strategies have been applied to zeroth-order algorithms that attempt to build a 
gradient approximation, yielding improved theoretical guarantees compared to 
deterministic variants~\cite{nesterov2017random, bergou2020stochastic}.}

In the context of directional \revn{direct-search methods}, 
Gratton et al{\rdj .}~\cite{GrRoViZh2015} 
\rev{established theoretical guarantees for randomized approaches thanks to the 
notion of \emph{probabilistic descent}, that considers the polling sets as 
random \revn{objects}.}

\begin{assumption}\label{ass:DKprob}
    The random polling set sequence $\{{\rdj \BBD}_k \}$ 
    \rev{satisfies} 
    \begin{equation}
    \label{eq:DKprob}
        \rev{
        \pr( \cm({\rdj \BBD}_k, -\nabla f(\Xk)) 
        \geq \tau \ | \ \F_{k - 1} ) 
        \geq 
        \beta 
        \qquad \mbox{for all}\ k,
        }
    \end{equation}
   \rev{where $\tau>0$, $\beta \in (0,1)$, $\F_{k-1}$ is the $\sigma$-algebra 
   generated by $\BBD_0,\dots,\BBD_{k-1}$ and $\F_{-1}=\emptyset$.}
\end{assumption}
Note that Assumption~\ref{ass:DKprob} is clearly satisfied when 
Assumption~\ref{ass:cmDk} holds, i.e., when the polling sets are positive 
spanning sets. {\rdj We note that since the objective function is deterministic and $\BBD_0,\dots,\BBD_{k-1}$ are $\F_{k-1}$-measurable by construction, then $\nabla f(\Xk)$ is also $\F_{k-1}$-measurable and hence deterministic with respect to $\F_{k-1}$ since its distribution before the construction of $\BBD_k$ is only determined by $\BBD_0,\dots,\BBD_{k-1}$. When the $\BBD_i, i=0,1,\dots,k$, are mutually independent, then in particular $\BBD_k$ is independent of $\F_{k-1}$ so that for all~$k$, requiring $\prob{\cm(\BBD_k,\bm{v})\geq \tau}\geq \beta$ to hold for any arbitrarily fixed $\bm{v}\in\rn$ is enough to satisfy~\eqref{eq:DKprob}. When the elements in these independent sets $\BBD_i$ (which do not need to be positive spanning sets) are independent uniformly distributed random vectors on the unit sphere of~$\rn$, the latter inequality is shown in~\cite[Lemma~B.2]{GrRoViZh2015} to be satisfied, thereby providing a demonstration of the satisfiability of~\eqref{eq:DKprob} and its realisticness. We finally emphasize that a more formal and rigorous demonstration of how the above inequality leads to~\eqref{eq:DKprob} is provided by~\cite[Proposition~B.3]{GrRoViZh2015}.
}  

Under Assumption~\ref{ass:DKprob}, one can derive a high-probability function 
evaluation complexity \rev{for Algorithm~\ref{alg:basicDS}, that implies 
almost-sure global convergence of the algorithmic process. Note that expected 
value results can also be obtained by that process~\cite{GrRoViZh2015}.}

\begin{theorem} \label{t:ncomplexity}
    Suppose that Algorithm~\ref{alg:basicDS} is applied to an $L$-smooth 
    function $f$ with $\rho(\alpha)=c\,\alpha^2$ for some $c>0$ and 
    $\gamma = \theta^{-1}$. Suppose that Assumption~\ref{ass:dirlowup} holds 
    for all realizations of $\{\BBD_k\}$, and that Assumption~\ref{ass:DKprob} 
    holds with $\beta > 1/2$ and $|\BBD_k| \leq m$ for every $k \in \mathbb{N}$. 
    Then, for any $\epsilon>0$, \rev{Algorithm~\ref{alg:basicDS} reaches an 
    iterate $\Xk$ such that $\|\nabla f(\Xk)\| \le \epsilon$ in at most 
    \begin{equation}
    \label{eq:ncomplexity}
    	\cO\left(\tau^{-2}\epsilon^{-2}\right) 
    	\quad \mbox{iterations} \quad 
        \mbox{and} \quad 
        \cO\left(m\tau^{-2}\epsilon^{-2}\right)
        \quad \mbox{function evaluations},
    \end{equation} 
    with probability at least $1-e^{\cO(\tau^{-2}\epsilon^{-2})}$.}
\end{theorem}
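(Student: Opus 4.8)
The plan is to bound the (random) hitting time $\tauEpsilon$, defined as the first index $k$ with $\|\nabla f(\Xk)\| \le \epsilon$, and then to multiply by the per-iteration evaluation count $|\BBD_k|\le m$ to obtain the evaluation bound. Throughout, call iteration $k$ \emph{good} when $\cm(\BBD_k,-\nabla f(\Xk))\ge \tau$; by Assumption~\ref{ass:DKprob} the indicators $Z_k:=\mathds{1}[\text{iteration }k\text{ good}]$ satisfy $\mathbb{E}[Z_k\mid \F_{k-1}]\ge \beta>\tfrac12$. The whole argument rests on combining a deterministic ``descent on good iterations'' fact with a stochastic counting argument over $k<\tauEpsilon$.

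First I would establish the deterministic implication. On a good iteration there is a direction $\db\in\BBD_k$ with $\scal{\nabla f(\Xk)}{\db}\le -\tau\|\db\|\,\|\nabla f(\Xk)\|$; substituting this into the $L$-smoothness upper bound used in the proof of Lemma~\ref{l:re} and comparing with the forcing term $c\alpha_k^2$ shows that, whenever $\|\nabla f(\Xk)\|>\epsilon$, any good iteration with $\alpha_k$ below a threshold $\bar{\alpha}:=\kappa\,\tau\epsilon$ is necessarily successful (the constant $\kappa>0$ depends only on $L,c,d_{\min},d_{\max}$ through Assumption~\ref{ass:dirlowup}). Since the algorithm uses $\gamma=\theta^{-1}$, such an iteration multiplies $\alpha_k$ by $\gamma$, i.e. it is an ``increase'' step for $\log_\gamma\alpha_k$.

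Next I would set up two complementary counting bounds for $k<\tauEpsilon$. (i) Telescoping the sufficient-decrease inequality $f(\Xkun)\le f(\Xk)-c\alpha_k^2$ over the set $\mathcal{S}$ of successful iterations and invoking the lower bound $f^*$ of Assumption~\ref{ass:fbound} gives $\sum_{k\in\mathcal{S}}\alpha_k^2\le (f(\xb_0)-f^*)/c$, which caps the number of successful iterations with $\alpha_k\ge\bar{\alpha}$ at $\cO\!\big((f(\xb_0)-f^*)/(c\,\bar{\alpha}^2)\big)=\cO(\tau^{-2}\epsilon^{-2})$. (ii) The quantity $\log_\gamma\alpha_k$ performs a $\pm1$ walk, increasing on successful and decreasing on unsuccessful iterations; the deterministic implication forces every good iteration with $\alpha_k\le\bar{\alpha}$ to be an increase step, so conditionally on $\F_{k-1}$ the walk has positive drift at least $2\beta-1>0$. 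Balancing up-steps (bounded via (i) together with the count of good iterations) against down-steps then constrains the number of iterations with $\alpha_k\le\bar{\alpha}$, hence the number of unsuccessful iterations, to be of the same order $\cO(\tau^{-2}\epsilon^{-2})$ — but only on realizations where the observed fraction of good iterations stays above $\tfrac12$.

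The final and most delicate step is to turn this drift argument into a high-probability statement. The $Z_k$ are dependent, but $\mathbb{E}[Z_k\mid\F_{k-1}]\ge\beta$ makes the partial sums $\sum_{k<N}(Z_k-\beta)$ a submartingale, so a Chernoff/Azuma-type inequality (in the form used in~\cite{GrRoViZh2015}) yields that over any window of $N=\cO(\tau^{-2}\epsilon^{-2})$ iterations the number of good iterations exceeds $N/2$ except on an event of probability $e^{-\cO(\tau^{-2}\epsilon^{-2})}$. On the complementary event the two counting bounds close, giving $\tauEpsilon\le\cO(\tau^{-2}\epsilon^{-2})$, and multiplying by $m$ produces the evaluation bound. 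I expect this concentration step to be the main obstacle: it is where the requirement $\beta>\tfrac12$ is genuinely needed (to guarantee net upward drift) and where the exponentially small failure probability originates, and it must be reconciled with the stopping-time structure of $\tauEpsilon$ and with the fact that the threshold $\bar{\alpha}$ is data-dependent through $\nabla f(\Xk)$.
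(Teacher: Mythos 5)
Your proposal is correct and follows essentially the same route as the argument the paper relies on (the theorem is stated without proof and attributed to \cite{GrRoViZh2015}, whose proof uses exactly this combination of a pathwise counting lemma on good/successful/unsuccessful iterations, the telescoped sufficient decrease, the $\pm 1$ walk on $\log_\gamma \alpha_k$, and a Chernoff bound for the submartingale $\sum_k (Z_k-\beta)$ with $\beta>1/2$). The final concern you raise is resolved in the standard way: the threshold $\bar{\alpha}=\kappa\tau\epsilon$ is in fact deterministic, and the stopping-time issue disappears because the counting inequality is proved pathwise on the event $\{\tauEpsilon>N\}$, so that $\prob{\tauEpsilon>N}$ is bounded by the probability that $\sum_{k<N}Z_k$ falls below $N/2+\cO(\tau^{-2}\epsilon^{-2})$, to which the unconditional concentration inequality applies.
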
 
When every $\BBD_k$ consists of a direction uniformly selected in the unit 
sphere \rev{and} its negative, probabilistic descent 
(Assumption~\ref{ass:DKprob}) is satisfied with $m = 2$ and 
\rev{$\tau = \cO(1/\sqrt{n})$}. 
\rev{It follows that the algorithm reaches an approximate stationary point in 
at most $\cO(n\,\epsilon^{-2})$ iterations \emph{and} function evaluations with 
probability $1-e^{\cO(n\,\epsilon^{-2})}$. The complexity bounds, that are 
probabilistic, improve over the deterministic result of 
Theorem~\ref{t:n2complexity} by a factor of $n$.}

\rev{Similar} results were also established for random search schemes in Bergou 
et al.~\cite{bergou2020stochastic}, in which a stepsize dependent on the 
Lipschitz constant of the gradient \rev{was} used. Direct-search methods 
employing a random direction and its negative at every iteration can be viewed 
as adaptive versions of those schemes, that maintain an estimate of the 
Lipschitz constant of the gradient through the stepsize parameter. The analysis 
of \revn{direct-search methods} using probabilistic descent directions was recently extended by 
Roberts and Royer~\cite{roberts2023direct} to allow for unbounded sets of 
descent directions, highlighting the connection with the theory of random 
matrices. In that work, each descent direction is defined through combinations 
of columns of a random matrix $\PP_k \in \mathbb{R}^{n \times r}$, that 
represents a subspace of possible directions at iteration $k$. {\rdj The complexity result from Gratton et al.~\cite{GrRoViZh2015} is recovered, with an additional dependence on the probabilistic bounds for the extreme singular values of~$\PP_k$.}
Such probabilistic bounds are non trivial to compute and have been 
the subject of recent studies~\cite{DzaWildHashing2022}.

\subsubsection{Deterministic convergence results in the nonsmooth case}
\label{sssec:directionaldetermnonsmooth}

We now turn to the nonsmooth setting, where we merely assume that $f$ is 
Lipschitz continuous. Convergence results for \rev{deterministic} methods based on 
sufficient decrease require asymptotic density of the set of directions.

\begin{theorem} \label{t:cdir}
    Suppose that Algorithm~\ref{alg:basicDS} is applied to a Lipschitz 
    continuous function $f$. Let $\mathcal{K}$ be a sequence of unsuccessful steps such 
    that $\{\xb_k\}_{k \in \mathcal{K}}$ is a refining subsequence converging to $\revn{ \hat{\xb}}$. 
    If the corresponding set of refining directions is dense in the unit sphere, 
    then $\revn{\hat{\xb}}$ is a Clarke stationary point.
\end{theorem}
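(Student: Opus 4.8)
The plan is to upgrade the directional nonnegativity guaranteed by Lemma~\ref{l:cvrefineddir} from the set of refining directions to \emph{all} directions, using the density assumption together with the continuity of the Clarke directional derivative in its direction argument. By Definition~\ref{def:clarkestatio}, it suffices to show that $f^\circ(\hat{\xb};\vi)\ge 0$ for every $\vi\in\rn$.

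First, I would invoke Lemma~\ref{l:cvrefineddir}. Since $f$ is Lipschitz continuous and $\{\xb_k\}_{k\in\mathcal{K}}$ is a refining subsequence converging to $\hat{\xb}$, the lemma guarantees that every refining direction $\db$ satisfies $f^\circ(\hat{\xb};\db)\ge 0$. The difficulty is that the refining directions need not fill out $\rn$; they are only assumed dense in the unit sphere, so this step alone does not yet yield stationarity.

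The key analytical ingredient will be the regularity of the map $\vi\mapsto f^\circ(\hat{\xb};\vi)$. For a Lipschitz continuous $f$, this map is positively homogeneous and itself Lipschitz continuous in $\vi$, with modulus equal to the Lipschitz constant of $f$; this is a standard property from Clarke differential calculus~\cite{Clar83a}. Positive homogeneity reduces the task to proving $f^\circ(\hat{\xb};\vi)\ge 0$ for unit vectors $\vi\in\snOne$. To this end, I would fix an arbitrary $\vi\in\snOne$ and use the density hypothesis to extract a sequence of refining directions $\{\db_j\}$ with $\db_j/\n{\db_j}\to\vi$. Combining the first step with positive homogeneity gives $f^\circ(\hat{\xb};\db_j/\n{\db_j})\ge 0$ for all $j$, and passing to the limit via the continuity of $f^\circ(\hat{\xb};\cdot)$ yields $f^\circ(\hat{\xb};\vi)\ge 0$. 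A final appeal to positive homogeneity extends this to every $\vi\in\rn$, and Definition~\ref{def:clarkestatio} then identifies $\hat{\xb}$ as Clarke stationary.

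The main obstacle is not a computation but a point requiring care: one must correctly invoke the Lipschitz continuity (hence continuity) of the Clarke directional derivative in its second argument, as this is precisely what lets the density of refining directions propagate nonnegativity to the whole sphere. Without this regularity, density of a subset of directions would be insufficient to conclude nonnegativity everywhere.
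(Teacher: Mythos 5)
Your proposal is correct and follows essentially the same route as the paper's proof: apply Lemma~\ref{l:cvrefineddir} to each refining direction, use density to approximate an arbitrary unit vector, pass to the limit, and conclude via positive homogeneity of $f^\circ(\hat{\xb};\cdot)$. You are in fact slightly more careful than the paper, which passes to the limit without explicitly citing the Lipschitz continuity of $\vi\mapsto f^\circ(\hat{\xb};\vi)$ that justifies that step.
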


\begin{proof}
    By Lemma~\ref{l:cvrefineddir}, for any refining direction $\db$ associated 
    with $\{\xb_k\}_{k \in \mathcal{K}}$, we have $f^\circ(\revn{\hat{\xb}};\db) \ge 0$. Since the 
    set of refining directions is dense in the unit sphere, for any nonzero 
    vector $\ub \in \R^n$, there exists a sequence of refining directions 
    $\{\db_i\}_{i \in \N}$ such that $\db_i \rightarrow \tfrac{\ub}{\|\ub\|}$ 
    as $i \rightarrow \infty$. As a result, we obtain in the limit that 
    $f^\circ\left(\revn{ \hat{\xb}};\tfrac{\ub}{\|\ub\|}\right) \ge 0$. Since the Clarke 
    directional derivative is positively homogeneous\revn{~\cite[Proposition 2.1.1]{Clar83a}}, we also have 
    $f^\circ(\revn{\hat{\xb}};\ub) \ge 0$, from which the desired conclusion follows.
\end{proof}

Under additional assumptions, one can obtain a stronger result akin to 
Theorem~\ref{th:lipgglobal} for nonsmooth objectives. While this result follows 
from well established techniques, we are not aware of any existing reference in 
the literature. 

\begin{theorem}\label{t:as_density}
    Let the assumptions of Theorem~\ref{t:cdir} hold. Suppose further that 
    Assumptions \ref{ass:fbound}, \ref{ass:rho} and \ref{ass:dirlowup} hold. 
    For any $k \in \N$, define 
    $\hat{\bbD}_k = {\accolade{{\db}/{\n{\db}} \ | \ \db \in \bbD_k}}$. 
    If $\cup_{k \in \mathcal{K}} \hat{\bbD}_k$ is dense in the unit sphere for every 
    infinite subsequence $\mathcal{K}$, then every limit point of 
    $\{\xb_k\}_{k \in \N}$ is Clarke stationary.
\end{theorem}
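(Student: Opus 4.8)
The plan is to reduce everything to Theorem~\ref{t:cdir}: I would show that every limit point $\hat{\xb}$ of $\{\xb_k\}_{k\in\N}$ is the limit of a refining subsequence consisting of unsuccessful iterations, and that the refining directions of this subsequence are dense in $\snOne$; Lemma~\ref{l:cvrefineddir} then forces $f^\circ(\hat{\xb};\cdot)\ge 0$ on the whole sphere. The reason the hypothesis is stated for \emph{every} infinite subsequence $\mathcal{K}$ is precisely that the argument does not control in advance which refining subsequence it produces, so density must be available for all of them.

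First, I would argue that $\lim_{k\to\infty}\alpha_k=0$. This is the nonsmooth counterpart of Lemma~\ref{l:cvalphasuffdec}; its proof uses only that $f$ is bounded below (Assumption~\ref{ass:fbound}) and that each successful iteration decreases $f$ by at least $\rho(\alpha_k)$, so it carries over verbatim to a merely Lipschitz $f$ in the sufficient-decrease regime: summing the decreases gives $\sum_{k\,\mathrm{successful}}\rho(\alpha_k)\le f(\xb_0)-f^*<\infty$, whence $\rho(\alpha_k)\to0$ and, as $\rho$ is nondecreasing and positive on $(0,\infty)$, $\alpha_k\to0$. Two facts then follow: there are infinitely many unsuccessful iterations (otherwise $\alpha_k$ would eventually only be scaled by $\gamma\ge1$, contradicting $\alpha_k\to0$), and by Assumption~\ref{ass:dirlowup} every step obeys $\n{\xb_{k+1}-\xb_k}\le d_{\max}\alpha_k\to0$.

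The heart of the argument is a bridging step. I would fix a limit point $\hat{\xb}$ and an infinite set $\mathcal{K}'$ with $\xb_k\to\hat{\xb}$ on $\mathcal{K}'$, and for each $k$ let $j(k)\ge k$ be the first unsuccessful iteration at or after $k$, which is well defined by the previous paragraph. Along the intervening successful run $k,\dots,j(k)-1$ one has $\alpha_{k+i}=\gamma^i\alpha_k$, so telescoping the step bound yields $\n{\xb_{j(k)}-\xb_k}\le d_{\max}\sum_{i=0}^{j(k)-k-1}\gamma^i\alpha_k=\tfrac{d_{\max}}{\gamma-1}\bigl(\alpha_{j(k)}-\alpha_k\bigr)\le\tfrac{d_{\max}}{\gamma-1}\alpha_{j(k)}$. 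Since $j(k)\ge k\to\infty$ forces $\alpha_{j(k)}\to0$, we obtain $\xb_{j(k)}\to\hat{\xb}$. Hence $\mathcal{K}:=\{j(k):k\in\mathcal{K}'\}$ is an infinite index set of unsuccessful iterations with $\xb_k\to\hat{\xb}$ and $\alpha_k\to0$, i.e.\ a refining subsequence with refined point $\hat{\xb}$.

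Finally, I would apply the density hypothesis to this particular $\mathcal{K}$. The countable set $\cup_{k\in\mathcal{K}}\hat{\bbD}_k$ is dense in the compact sphere $\snOne$, which (since $n\ge2$) has no isolated points, so every $\ub\in\snOne$ is an accumulation point of it; because each $\hat{\bbD}_k$ is finite, such accumulation can only be witnessed by an unbounded sequence of indices in $\mathcal{K}$, which is exactly the statement that $\ub$ is a refining direction. Thus every unit vector is a refining direction, and Lemma~\ref{l:cvrefineddir} gives $f^\circ(\hat{\xb};\ub)\ge0$ for all $\ub\in\snOne$; positive homogeneity of the Clarke directional derivative then extends this to all $\vi\in\R^n$, so $\hat{\xb}$ is Clarke stationary (equivalently, this verifies the hypothesis of Theorem~\ref{t:cdir}). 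I expect the bridging step to be the genuine obstacle: the telescoping estimate relies on strict expansion $\gamma>1$, and when $\gamma=1$ it degenerates to $d_{\max}\alpha_k(j(k)-k)$, which need not vanish, so the no-expansion case, and relatedly the derivation of $\alpha_k\to0$ under mere simple decrease, would require a separate argument bounding the length or diameter of successful runs.
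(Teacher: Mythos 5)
The paper provides no proof of this theorem (it only remarks that the result ``follows from well established techniques''), so there is nothing to compare your argument against; judged on its own merits, your proof is essentially correct and follows what is surely the intended route: establish $\alpha_k\to 0$, bridge an arbitrary convergent subsequence to a refining subsequence of unsuccessful iterations with the same limit point, invoke the density hypothesis for that particular index set (which is exactly why it must hold for \emph{every} infinite subsequence) to show that every unit vector is a refining direction, and conclude via Lemma~\ref{l:cvrefineddir} and Theorem~\ref{t:cdir}. The telescoping bound $\n{\xb_{j(k)}-\xb_k}\le \tfrac{d_{\max}}{\gamma-1}\,\alpha_{j(k)}$ over the successful run preceding the next unsuccessful iteration is correct, as is the observation that density of $\cup_{k\in\mathcal{K}}\hat{\bbD}_k$ in a sphere without isolated points, combined with finiteness of each $\bbD_k$, forces every unit vector to be approached by normalized directions drawn from infinitely many indices of $\mathcal{K}$, i.e.\ to be a refining direction in the sense of Definition~\ref{def:refining}.

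The two caveats you raise yourself are genuine and worth recording. First, Assumption~\ref{ass:rho} is satisfied by $\rho\equiv 0$, so the theorem as literally stated does not yield $\alpha_k\to 0$; your argument (and any argument of this type) needs the sufficient-decrease regime $\rho(\alpha)>0$ for $\alpha>0$, which is implicit in the surrounding discussion of Section~\ref{sssec:directionaldetermnonsmooth} but absent from the formal hypotheses. Second, Algorithm~\ref{alg:basicDS} permits $\gamma=1$, where your geometric sum degenerates: the length of a successful run at constant stepsize $\alpha$ is then only controlled by $(f(\xb_k)-\inf_m f(\xb_m))/\rho(\alpha)$, and since $\rho(\alpha)=o(\alpha)$ the resulting displacement bound $d_{\max}\,\alpha\,(f(\xb_k)-\inf_m f(\xb_m))/\rho(\alpha)$ need not vanish; the bridging step therefore genuinely requires $\gamma>1$ or a separate argument. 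A smaller cosmetic point: the deduction of $\alpha_k\to 0$ should distinguish the case of finitely many successful iterations (geometric decay) from the infinite case, and in the latter pass from the successful subsequence to the full sequence using $\alpha_{k+1}\le\gamma\alpha_k$ on successes and $\alpha_{k+1}=\theta\alpha_k$ otherwise; this is routine and is precisely the content of Lemma~\ref{l:cvalphasuffdec}, which the paper also states without proof.
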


Unlike in the smooth setting, complexity bounds for \revn{direct-search algorithms} 
remain rather unexplored in the nonsmooth setting. Garmanjani and 
Vicente~\cite{garmanjani2013smoothing} provided complexity results for a 
special class of nonsmooth \rev{objectives that can be approximated by a family 
of smooth functions $\{\tilde{f}(\cdot,\mu)\}_{\mu>0}$ such that 
$\tilde{f}(\cdot,\mu) \rightarrow f$ as $\mu \rightarrow 0$.} Examples of such 
functions include Gaussian or uniform smoothing, for which one can show that 
$\tilde{f}(\cdot,\mu)$ is 
$\mathcal{O}\left(\tfrac{\sqrt{n}}{\mu}\right)$-smooth~\cite{nesterov2017random}. 
We present below a simplified version of the analysis in Garmanjani and 
Vicente~\cite{garmanjani2013smoothing} based on a fixed smoothing parameter.

\begin{theorem}
\label{th:smoothingdirds}
    Let $f$ be a Lipschitz continous function satisfying 
    Assumption~\ref{ass:fbound}. Given $\epsilon>0$, suppose that we apply 
    Algorithm~\ref{alg:basicDS} to the smoothed function 
    $\tilde{f}(\cdot,\epsilon)$ \rev{under Assumption~\ref{ass:L0} with
    $\rho(\alpha)=c\,\alpha^{3/2}$ and polling sets satisfying 
    Assumptions~\ref{ass:dirlowup}, \ref{ass:cmDk} with 
    $\tau=\frac{1}{\sqrt{n}}$.}

    Then, the algorithm takes at most $\cO(n^{3/2} \epsilon^{-3})$ iterations 
    and $\cO(n^{5/2} \epsilon^{-3})$ function evaluations \rev{of the smoothed 
    objective} to obtain a point such that 
    $\n{\nabla_{\xb} \tilde{f}(\xb,\epsilon)} < \epsilon$.
\end{theorem}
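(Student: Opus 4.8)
The plan is to treat $\tilde{f}(\cdot,\epsilon)$ as a \emph{fixed} smooth objective and run the standard sufficient-decrease complexity machinery underlying Theorem~\ref{t:n2complexity}, while carefully propagating the two $\epsilon$- and $n$-dependent quantities that distinguish this setting: the smoothing constant $L=\cO(\sqrt{n}/\epsilon)$ and the forcing function $\rho(\alpha)=c\,\alpha^{3/2}$. First I would record that $\tilde{f}(\cdot,\epsilon)$ inherits lower boundedness from $f$ (a smoothing by averaging satisfies $\tilde{f}(\cdot,\epsilon)\ge f^*$, so Assumption~\ref{ass:fbound} transfers), and that it is $L$-smooth with $L=\cO(\sqrt{n}/\epsilon)$; this lets me invoke the deterministic unsuccessful-step estimate throughout, with Assumption~\ref{ass:L0} keeping the total admissible decrease $\tilde{f}(\xb_0,\epsilon)-\inf\tilde{f}=\cO(1)$.

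The core estimate comes from Proposition~\ref{prop:unsuccPSSsmooth} applied to $\tilde{f}(\cdot,\epsilon)$ with $\rho(\alpha)=c\,\alpha^{3/2}$, which gives at any unsuccessful iteration $k$
\[
\n{\nabla_{\xb}\tilde{f}(\xb_k,\epsilon)}
\le
\frac{1}{\tau}\left[\frac{L\,d_{\max}}{2}\,\alpha_k+\frac{c}{d_{\min}}\,\alpha_k^{1/2}\right].
\]
With $\tau=1/\sqrt{n}$ and $L=\cO(\sqrt{n}/\epsilon)$, the linear term drops below $\epsilon$ once $\alpha_k=\cO(\tau\epsilon/L)=\cO(\epsilon^2/n)$, while the square-root term does so once $\alpha_k=\cO((\tau d_{\min}\epsilon/c)^2)=\cO(\epsilon^2/n)$. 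The key point --- and the reason the exponent $3/2$ is chosen --- is that these two thresholds share the \emph{same} $\epsilon$ and $n$ scaling, so there is a single critical stepsize $\alpha_*=\Theta(\epsilon^2/n)$ below which the right-hand side falls under $\epsilon$. Hence, before a point with $\n{\nabla_{\xb}\tilde{f}}<\epsilon$ is produced, every unsuccessful iteration must have $\alpha_k>\alpha_*$; since $\alpha_k$ is reduced only by the factor $\theta$ at unsuccessful steps, this yields the uniform lower bound $\alpha_k\ge\theta\alpha_*=\Omega(\epsilon^2/n)$ for all iterations preceding the target (for $\epsilon$ small enough that $\alpha_0\ge\alpha_*$).

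With the stepsize pinned from below, I would count iterations by the usual two-step argument. Each successful iteration decreases $\tilde{f}$ by at least $\rho(\alpha_k)=c\,\alpha_k^{3/2}\ge c(\theta\alpha_*)^{3/2}=\Omega(\epsilon^3/n^{3/2})$; comparing against the admissible decrease $\cO(1)$ bounds the number of successful iterations by $\cO(n^{3/2}\epsilon^{-3})$. To bound the unsuccessful iterations, I would use the stepsize identity $\alpha_k=\alpha_0\,\gamma^{|\mathcal{S}_k|}\theta^{|\mathcal{U}_k|}$, where $\mathcal{S}_k$ and $\mathcal{U}_k$ collect the successful and unsuccessful iterations up to $k$: taking logarithms and inserting $\alpha_k\ge\theta\alpha_*$ gives $|\mathcal{U}_k|=\cO(|\mathcal{S}_k|)+\cO(\log(1/\alpha_*))$, and the term $\cO(\log(n/\epsilon^2))$ is dominated, so the unsuccessful count is also $\cO(n^{3/2}\epsilon^{-3})$. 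Summing yields the iteration bound. For the evaluation bound I would note that $\tau=1/\sqrt{n}$ is realized by an orthonormal basis together with its negatives, so $m=|\bbD_k|=2n$ evaluations suffice per iteration (in the worst case of complete polling), multiplying the iteration bound by $\cO(n)$ to give $\cO(n^{5/2}\epsilon^{-3})$.

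The main obstacle, in my view, is not any single step but the bookkeeping tying the $\epsilon$-dependent smoothing constant to the forcing-function exponent: the entire gain over a naive transfer of Theorem~\ref{t:n2complexity} rests on the balancing computation showing that both terms of the unsuccessful estimate cross $\epsilon$ at $\alpha_*=\Theta(\epsilon^2/n)$. Using the standard $\rho(\alpha)=c\,\alpha^2$ instead would leave $\alpha_*=\Theta(\epsilon^2/n)$ but give a per-step decrease of only $\Omega(\epsilon^4/n^2)$, degrading the bound to $\cO(n^2\epsilon^{-4})$; so the delicate part is justifying that $3/2$ is precisely the exponent matching $L=\cO(\sqrt{n}/\epsilon)$. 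A secondary technical point to handle cleanly is the lower-boundedness and $L$-smoothness of the smoothed objective, together with the mild requirement $\alpha_0\ge\alpha_*$ that holds for all sufficiently small $\epsilon$.
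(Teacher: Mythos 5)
Your proposal is correct and follows essentially the argument the paper intends: the survey states this theorem without proof as a simplified, fixed-$\mu$ version of the Garmanjani--Vicente analysis, and that analysis is precisely the sufficient-decrease complexity machinery of Theorem~\ref{t:n2complexity} instantiated with $L=\cO(\sqrt{n}/\epsilon)$, $\tau=1/\sqrt{n}$ and $\rho(\alpha)=c\,\alpha^{3/2}$, yielding the critical stepsize $\alpha_*=\Theta(\epsilon^2/n)$ and per-success decrease $\Omega(\epsilon^3/n^{3/2})$ exactly as you compute. Your balancing discussion explaining why the exponent $3/2$ matches the $\epsilon$-dependent smoothing constant is the right key observation, and the counting of successful versus unsuccessful iterations and the $m=2n$ evaluation cost are handled correctly.
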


Very recently, Lin et al{\rdj .}~\cite{lin2022gradient} obtained a function evaluation 
complexity of $\cO(n^{3/2}\epsilon^{-5})$ \rev{for} zeroth-order, 
smoothing-based methods to achieve a similar outcome \revn{to} that of 
Theorem~\ref{th:smoothingdirds}. The former result has a lower dependence 
\rev{on $n$ but a higher dependence from $\epsilon$ compared to that of the 
latter}. Note however that the algorithm of Lin et al.~\cite{lin2022gradient} 
only uses true function values rather than values for smoothed approximations.

\subsection{Mesh-based algorithms}
\label{ssec:meshbased}

In contrast with {\rdj several} directional direct-search techniques {\rdj using sufficient decrease}, mesh-based algorithms 
rely on simple decrease, and enforce convergence thanks to a careful 
generation of search directions. Pioneer{\rdj ing} methods of that form 
include generalized pattern search (GPS)~\cite{Torc97a,BoDeFrSeToTr99a} and 
frame-based methods~\cite{CoPr01a}. We focus our presentation on the 
\emph{mesh adaptive \revn{direct-search}} (MADS) \rev{paradigm}~\cite{AuDe2006}, that 
is nowadays dominant in the landscape of mesh-based algorithms, thanks in part 
to the practical success of the NOMAD software~{\rdj \cite{NomadV4}}. 

Algorithm~\ref{alg:mads}{\rdj , inspired by~\cite[Algorithm~8.1]{AuHa2017}}, describes a \rev{basic mesh-based approach for 
unconstrained optimization. A key feature of these methods is the use of two 
stepsize parameters called \emph{mesh size parameter} {\rdj denoted by~$\alpha_k^m$} and 
\emph{poll size parameter} {\rdj denoted by~$\alpha_k^p$}, {\rdj in order to} produce a rich set of trial points to be 
considered by the algorithm.}
\rev{The algorithm defines directions based on a single set $\bbD$ formed by 
the columns of $\bm{D}:=\bm{G}\bm{Y}\in\R^{n\times n_D}$, where $\bm{G}\in\rnn$ 
is invertible and the columns of $\bm{Y}\in\Z^{n\times n_D}$ form a positive 
spanning set. At iteration $k$, the mesh $\Mk$, and the frame $\F_k$ centered at 
$\xk\in\rn$  are defined respectively as}
\begin{equation}
\label{eq:meshk} 
    \Mk:=\underset{\x\in\mathcal{V}_k}{\bigcup}\accolade{\x+\dm\bm{D}\bm{z}:
    \  \bm{z}\in\N^{n_D}}\subset\rn 
    \quad\mbox{and}\quad
    \F_k:=\accolade{\x\in\Mk:\norminf{\x-\xk}\leq \dpl d_{\max}},
\end{equation}
where $\mathcal{V}_k$ is the set containing all the points where the objective 
function has been evaluated by the start of iteration~$k$, $\dm\leq \dpl$ for 
all~$k$, and $d_{\max}=\max\accolade{\norminf{\di'}:\di'\in \mathbb{D}}$, so 
that $\F_k\subseteq\Mk$. \revn{We note that the relation~\eqref{eq:meshk} justifies that the poll size parameter is sometimes referred to as {\it frame size parameter} in the literature.} Note that the mesh is conceptual in the sense that it is 
actually never explicitly constructed nor stored. \rev{Trial points are generated 
so as to belong to the mesh, by defining $\M_k$ according to $\mathcal{V}_k$ and 
using the directions in $\bbD$.}

\begin{algorithm}[h]
	\caption{\revn{Mesh-based direct-search algorithm}}
	\label{alg:mads}
	\begin{algorithmic}[1]
        \par\vspace*{0.1cm}
		\STATE \textbf{Inputs:} $\xb_0\in\rn$, $\alpha_0^p>0$ {\rdj and}  
		$\theta \in (0,1) \cap \mathbb{Q}$.\\
		\FOR{$k=0,1,2,\ldots$}
			\STATE Select a positive spanning set  $\Dkp$ \revn{(e.g., by calling Algorithm~\ref{alg:polldir}) so as to 
            satisfy}\\ 
			$\xk+\dm \di \in \F_k \subseteq\Mk$ for all $\di\in \Dkp$, where 
			$\dm = \min\{\dpl,(\dpl)^2\}$.
            \IF {$f(\xk+\bm{s})< f(\xk)$ holds for some 
            $\bm{s} \in  \accolade{\dm \di:\di\in \Dkp}$} 
            	\STATE Declare the iteration as successful, 
                set $\xkun = \xk +\bm{s}$ {\rdj and}
                $\alpha_{k+1}^p = \theta^{-1}\alpha_k^p${\rdj .}
            \ELSE 
                \STATE Declare the iteration as unsuccessful, set $\xkun = \xk$ 
                 and $\alpha_{k+1}^p = \theta \alpha_k^p$.
            \ENDIF
		\ENDFOR
	\end{algorithmic}
\end{algorithm}

The goal of a \rev{mesh-based} iteration is to locate an {\it improved mesh point} by 
evaluating the objective at a finite number of points lying on the frame $\F_k$. 
\rev{Algorithm~\ref{alg:polldir} describes how such directions can be chosen in order 
to lie on the frame.}
In practice, we point out that additional points on $\Mk \setminus \F_k$ are often 
considered {\rdj during a} search step. When iteration $k$ of 
Algorithm~\ref{alg:mads} is unsuccessful, the frame $\F_k$ is called a {\it minimal 
frame} and $\xk$ is called a {\it minimal frame center}~\cite[Section~2]{AuDe2006}. 
In that case, we have $\dpun = \theta \dpl < \dpl$ and $\dmun < \dm$, but 
allowing the poll size parameter $\dm$ to decrease more rapidly than $\dpl$ leads 
to strong guarantees on the trial points that are considered by the 
\rev{method}~\cite{AuHa2017}.

\begin{algorithm}[h]
	\caption{Creating the set $\Dkp$ of poll directions}
	\label{alg:polldir}
	\begin{algorithmic}[1]
        \par\vspace*{0.1cm}
		\STATE \textbf{Inputs:} $\vi_k\in\rn$ with $\norme{\vi_k}=1$ and 
		$\dpl\geq\dm >0$.\\
		\STATE Use $\vi_k$ to create the Householder matrix 
		$\mathbf{H}_k=\bm{I}_n-2\vi_k{\vi_k}^\top
		=[\bm{h}_{1,k}\ \bm{h}_{2,k}\ \dots\ \bm{h}_{n,k}]\in\rnn$. \\
    	\STATE Define $\mathbb{B}_k=\{\bm{b}_{1,k},\bm{b}_{2,k},\dots,
    	\bm{b}_{n,k}\}$ with 
    	$\bm{b}_{j,k}=\text{round}\left(
    	\frac{\dpl}{\dm}\frac{\bm{h}_{j,k}}{\norminf{\bm{h}_{j,k}}}\right)
    	\in \mathbb{Z}^n$, $j=1,2,\ldots,n$. \\
    	\STATE Set the poll set as 
    	$\Dkp=\mathbb{B}_k\cup (-\mathbb{B}_k)$.
	\end{algorithmic}
\end{algorithm}

\rev{A first step towards establishing convergence of Algorithm~\ref{alg:mads} 
consists {\rdj of} observing that the coarseness level of the 
mesh vanishes asymptotically.}

\begin{theorem}(\cite[Theorem~8.1]{AuHa2017})\label{zerothOrdMADStheor}
Suppose that Algorithm~\ref{alg:mads} is applied to a function $f$ satisfying 
Assumption~\ref{ass:L0}. Then, the stepsize parameter sequences satisfy
\begin{equation}\label{dmdlZeroEq}
\underset{k\to\infty}{\liminf}\ \dm=\underset{k\to\infty}{\liminf}\ \dpl=0.
\end{equation}
\end{theorem}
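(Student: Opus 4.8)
The plan is to first reduce both limits to a single statement, then argue by contradiction exploiting the rational lattice underlying the mesh. Observe that $\alpha_k^m=\min\{\alpha_k^p,(\alpha_k^p)^2\}$ equals $\alpha_k^p$ when $\alpha_k^p\ge 1$ and $(\alpha_k^p)^2$ when $\alpha_k^p<1$; in particular $\alpha_k^m\le\alpha_k^p$ always, and along any subsequence one has $\alpha_k^p\to 0$ if and only if $\alpha_k^m\to 0$ (for small values $\alpha_k^p=\sqrt{\alpha_k^m}$). Hence $\liminf_k\alpha_k^m=\liminf_k\alpha_k^p$, and it suffices to prove that this common value is $0$.

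Suppose, for contradiction, that $\liminf_k\alpha_k^p>0$, so that $\alpha_k^p\ge\alpha_{\mathrm{low}}>0$ for some constant and all large $k$. I would first record two facts. First, every iterate stays in $L_0$: since $f(\xkun)\le f(\xk)$ at every iteration (strict decrease on success, equality on failure), we have $f(\xk)\le f(\xb_0)$, and $L_0$ is bounded by Assumption~\ref{ass:L0}. Second, $\{\alpha_k^p\}$ is bounded above. Indeed, on a successful iteration the accepted step $\bm{s}=\alpha_k^m\di$ with $\di\in\Dkp$ satisfies $\norminf{\di}\ge\tfrac12\,\alpha_k^p/\alpha_k^m$, because the largest-magnitude coordinate of $\bm{h}_{j,k}/\norminf{\bm{h}_{j,k}}$ in Algorithm~\ref{alg:polldir} equals $1$ and rounds to an integer of magnitude at least $\tfrac12\,\alpha_k^p/\alpha_k^m\ge\tfrac12$; therefore $\norminf{\xkun-\xk}=\alpha_k^m\norminf{\di}\ge\tfrac12\alpha_k^p$. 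As both endpoints lie in the bounded set $L_0$, this bounds $\alpha_k^p$ on successful iterations, and the update rule propagates the bound to all $k$.

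Next I would use the rationality of $\theta$. Writing the update rule as $\alpha_k^p=\theta^{i_k}\alpha_0^p$ with $i_k\in\Z$, the two-sided bound on $\alpha_k^p$ confines the exponents $i_k$ to a finite range, so $\{\alpha_k^p\}$ and hence $\{\alpha_k^m\}$ take only finitely many values, all mutually commensurable because $\theta$ is rational. Let $\alpha_{\mathrm{fine}}>0$ be a common divisor of these finitely many mesh sizes. Since $\bm{D}=\bm{G}\bm{Y}$ with $\bm{Y}\in\Z^{n\times n_D}$ and every trial point is produced from a previously evaluated point by adding a displacement $\alpha_j^m\bm{D}\bm{z}=\bm{G}\,(\alpha_j^m\bm{Y}\bm{z})$ with $\bm{Y}\bm{z}\in\Z^n$, all evaluated points lie on the fixed lattice $\Lambda:=\xb_0+\alpha_{\mathrm{fine}}\,\bm{G}\,\Z^{n}$. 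Being the intersection of a discrete lattice with a bounded set, $\Lambda\cap L_0$ is finite.

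Finally I would conclude by a dichotomy on the number of successful iterations. If there are infinitely many, their iterates have strictly decreasing objective values, hence are pairwise distinct, yet all belong to the finite set $\Lambda\cap L_0$ — a contradiction. If there are only finitely many, then past the last success every iteration is unsuccessful and $\alpha_{k+1}^p=\theta\alpha_k^p$ with $\theta\in(0,1)$, forcing $\alpha_k^p\to 0$ and again contradicting $\liminf_k\alpha_k^p>0$. Either way the assumption fails, so $\liminf_k\alpha_k^p=0$, and by the reduction of the first paragraph $\liminf_k\alpha_k^m=0$ as well. I expect the lattice step of the third paragraph to be the main obstacle: it is the only place where rationality of $\theta$ and the integer factor $\bm{Y}$ are essential, and one must check carefully that the finitely many mesh sizes — which may come from either branch of the $\min$ defining $\alpha_k^m$ — do admit a common refinement, so that all trial points genuinely lie on one discrete lattice. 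The remaining steps are routine bookkeeping.
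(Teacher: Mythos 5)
Your overall strategy is exactly the one behind the paper's cited proof (\cite[Theorem~8.1 and Lemmas~7.1--7.3]{AuHa2017}): rationality of $\theta$ plus the integrality of $\bm{Y}$ confine all evaluated points to a discrete lattice, whose intersection with the bounded sublevel set $L_0$ of Assumption~\ref{ass:L0} is finite; strict decrease makes the successful iterates pairwise distinct, so there are finitely many successes, after which $\alpha_k^p$ decays geometrically to zero. The reduction between the two liminfs, the upper bound on $\alpha_k^p$ obtained from the rounding in Algorithm~\ref{alg:polldir}, and the closing dichotomy are all sound.

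The obstacle you flag at the end is, however, a genuine gap rather than a formality. Under your contradiction hypothesis the poll size takes finitely many values $\theta^{i}\alpha_0^p$, but the mesh sizes $\alpha_k^m=\min\{\alpha_k^p,(\alpha_k^p)^2\}$ are drawn from the two families $\{\theta^{i}\alpha_0^p\}$ (used when $\alpha_k^p\ge 1$) and $\{\theta^{2i}(\alpha_0^p)^2\}$ (used when $\alpha_k^p<1$). Ratios within a single family are integer powers of $\theta$, hence rational; but a cross-family ratio equals $\theta^{\,i-2j}/\alpha_0^p$, which is irrational whenever $\alpha_0^p$ is. In that case the finitely many mesh sizes admit no positive common divisor: the additive group they generate is dense in $\R$, the evaluated points only lie in a set of the form $\xb_0+\bm{G}\left(\alpha_1\Z^n+\alpha_2\Z^n\right)$, which is not discrete, and the finiteness of $\Lambda\cap L_0$ --- the engine of the whole argument --- fails. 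The standard resolution, implicit in \cite{AuHa2017}, is to normalize the initial poll size to a rational value (typically $\alpha_0^p=1$), so that every mesh size is a positive rational and any finite collection of them has a positive common divisor; alternatively, if the trajectory of $\alpha_k^p$ never crosses the threshold $1$, only one family occurs and commensurability is automatic. As the algorithm is stated in the survey, with $\alpha_0^p>0$ an arbitrary real, your proof needs this normalization added as a hypothesis or a preprocessing step; with it, the argument is complete.
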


The proof of Theorem~\ref{zerothOrdMADStheor} relies on the properties of the 
meshes rather than sufficient decrease, as well as the use of a rational 
parameter to update both \rev{$\dm$ and $\dpl$}~\cite[Lemmas 7.1-7.3]{AuHa2017}. 
In particular, the structure of the matrix $\bm{D}$ used in 
Algorithm~\ref{alg:mads} guarantees that the distance between distinct mesh 
points is lower bounded by a function of the mesh size parameter $\dm$.

The second \rev{argument in a mesh-based convergence theory relates to} the 
Clarke generalized directional derivative of the objective function $f$ at 
refined points \rev{in refining directions, in the sense of 
Definition~\ref{def:refining}}. Crucially, the notion of \revn{the} refining sequence 
uses $\{\dm\}$ as the stepsize parameters but $\{\Dkp\}$ as polling sets.
\rev{By combining Theorem~\ref{zerothOrdMADStheor} with Assumption~\ref{ass:L0}, 
one can then establish that there exists at 
\revn{least} one refining subsequence with  
a refined point~\cite[Theorem~7.6]{AuHa2017}. Meanwhile, the existence of a 
refining direction follows from the compactness of the closed unit ball along 
with Bolzano--Weierstrass's theorem~\cite[Section~8.3]{AuHa2017}. Overall, the 
following convergence result can be derived.}

\begin{theorem}
\label{theo:madscdir}{\rdj (\cite[Theorem~8.3]{AuHa2017})}
    Suppose that Algorithm~\ref{alg:mads} is applied to a Lipschitz continuous 
    function~$f$ satisfying Assumption~\ref{ass:L0}. Let $\{\xb_k\}_{k \in \mathcal{K}}$ 
    be a refining subsequence for the algorithm with refined point 
    $\hat{\xb} \in \R^n$, and let $\di \in \R^n$ be a refining direction for 
    $\hat{\xb}$. Then $f^{\circ}(\hat{\x};\di)\geq 0$.
\end{theorem}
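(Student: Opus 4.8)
The plan is to recognize Algorithm~\ref{alg:mads} as an instance of the generic direct-search template of Algorithm~\ref{alg:basicDS} and then to invoke Lemma~\ref{l:cvrefineddir}, whose conclusion~\eqref{eq:cvrefineddir} is exactly the assertion to be proved. The correspondence is as follows: the mesh size $\dm$ plays the role of the stepsize $\ssize_k$, the poll set $\Dkp$ plays the role of $\bbD_k$, and the forcing function is the trivial one $\rho \equiv 0$ (simple decrease). Since $\rho \equiv 0$ satisfies Assumption~\ref{ass:rho} automatically, the standing hypotheses of Lemma~\ref{l:cvrefineddir}, namely Lipschitz continuity of $f$ and a forcing function obeying Assumption~\ref{ass:rho}, hold here.

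First I would establish the unsuccessful-iteration inequality that feeds the lemma. For $k \in \mathcal{K}$ the poll step fails, so no trial point $\xk + \dm \di'$ with $\di' \in \Dkp$ satisfies the simple-decrease test; equivalently, $f(\xk + \dm \di') \ge f(\xk)$ for every $\di' \in \Dkp$. Dividing by $\dm > 0$ gives
\[
    \frac{f(\xk + \dm \di') - f(\xk)}{\dm} \ge 0 \qquad \forall\, \di' \in \Dkp,
\]
which is precisely inequality~\eqref{eq:unsuccgen} with $\rho \equiv 0$. This is the only place where the algorithmic mechanism specific to Algorithm~\ref{alg:mads} enters the argument.

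Next I would verify that the refining data of the statement match the hypotheses of Lemma~\ref{l:cvrefineddir}. By assumption $\{\xb_k\}_{k \in \mathcal{K}}$ is a refining subsequence, so $\xk \to \hat{\xb}$ and $\dm \to 0$ along $\mathcal{K}$; recall that for Algorithm~\ref{alg:mads} it is the mesh size $\dm$, not the poll size $\dpl$, that serves as the vanishing stepsize in Definition~\ref{def:refining}. Since $\di$ is a refining direction, there are an infinite $\mathcal{L} \subseteq \mathcal{K}$ and directions $\di_\ell \in \mathbb{D}_\ell^p$ with $\di_\ell/\|\di_\ell\| \to \di/\|\di\|$. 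Applying the displayed inequality along $\mathcal{L}$ to the directions $\di_\ell$ and passing to the limit exactly as in the proof of Lemma~\ref{l:cvrefineddir} yields $f^{\circ}(\hat{\xb}; \di) \ge 0$.

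The one point that deserves care, and which I expect to be the main (if modest) obstacle, is the passage to the limit in the difference quotient, because the direction in the numerator is the varying $\di_\ell$ rather than the fixed limit $\di$. This is handled by writing the effective step as $t_\ell = \alpha_\ell^m \|\di_\ell\|$, which tends to $0$ since $\alpha_\ell^m \to 0$ and $\|\di_\ell\|$ stays bounded under Assumption~\ref{ass:dirlowup}, and then using the Lipschitz continuity of $f$ to replace $\di_\ell/\|\di_\ell\|$ by $\di/\|\di\|$ up to an $o(1)$ error in the quotient. The limsup defining $f^{\circ}(\hat{\xb};\di)$, taken over $\yb \to \hat{\xb}$ and $t \searrow 0$, then dominates the nonnegative limit of the quotient along $\mathcal{L}$, giving the claim. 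As these manipulations are precisely those already carried out in Lemma~\ref{l:cvrefineddir}, no new estimates are needed beyond checking that its hypotheses transfer to the mesh-based setting.
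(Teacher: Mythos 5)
Your proposal takes exactly the route the paper intends: Theorem~\ref{theo:madscdir} is presented there as a direct consequence of Lemma~\ref{l:cvrefineddir}, obtained by viewing Algorithm~\ref{alg:mads} as an instance of Algorithm~\ref{alg:basicDS} with $\rho\equiv 0$, with $\dm$ as the stepsize and $\Dkp$ as the poll set in Definition~\ref{def:refining} --- precisely your correspondence, including the unsuccessful-iteration inequality~\eqref{eq:unsuccgen} with $\rho\equiv 0$.

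One detail in your limit argument is off, though it does not sink the proof. You invoke Assumption~\ref{ass:dirlowup} to claim $\|\di_\ell\|$ stays bounded, but that assumption is not among the hypotheses of the theorem and in fact fails for the MADS poll directions: Algorithm~\ref{alg:polldir} produces $\bm{b}_{j,\ell}$ with $\|\bm{b}_{j,\ell}\|_\infty \approx \alpha_\ell^p/\alpha_\ell^m$, which blows up along a refining subsequence since $\alpha_\ell^m = (\alpha_\ell^p)^2$ once $\alpha_\ell^p < 1$. The correct justification that the effective step $t_\ell = \alpha_\ell^m\|\di_\ell\|$ vanishes is the frame condition in~\eqref{eq:meshk}: trial points satisfy $\|\alpha_\ell^m \di_\ell\|_\infty \le \alpha_\ell^p d_{\max}$, and $\alpha_\ell^p \to 0$ along $\mathcal{K}$ because $\alpha_\ell^m \to 0$. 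This is exactly the point the paper flags when it emphasizes that the refining sequence uses $\{\dm\}$ as stepsizes while the directions come from $\{\Dkp\}$; with that substitution your argument is complete.
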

Theorem~\ref{theo:madscdir} is a direct consequence of 
Lemma~\ref{l:cvrefineddir}, but \rev{only appeared in the more general, 
constrained setting throughout the mesh-based direct-search 
literature~\cite[Theorem~3.12]{AuDe2006}, \cite[Theorem~8.3]{AuHa2017}). As 
such, the result of Theorem~\ref{theo:madscdir} is relatively weak, in the 
sense that we have no information regarding the refining directions.} A key to 
strengthening \rev{Theorem~\ref{theo:madscdir}} lies in the fact that the mesh 
size parameter $\dm$ goes to zero faster than the frame size parameter~$\dpl$. 
\rev{By carefully designing the poll directions, it then becomes possible to 
guarantee asymptotic density of those directions.} 

\begin{theorem}~(\cite[Theorem~8.6]{AuHa2017})
\label{theo:madsdensedir}
The sequence of sets $\Dkp$ produced by Algorithm~\ref{alg:polldir} forms an 
asymptotically dense set of directions if the directions $\vi_k$ used to 
construct the Householder matrices $\mathbf{H}_k$ form a dense sequence 
$\accoladekinN{\vi_k}$ in the unit sphere $\snOne$.
\end{theorem}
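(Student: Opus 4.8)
The plan is to establish density of the normalized poll directions by combining two independent estimates: a \emph{reflection} estimate showing that the columns of $\mathbf{H}_k$ sweep out a dense subset of $\snOne$ as $\vi_k$ does, and a \emph{rounding} estimate showing that the integer directions $\bm{b}_{j,k}$ of Step~3 of Algorithm~\ref{alg:polldir}, once normalized, differ from those columns by an error controlled by $\dm/\dpl$. Since Algorithm~\ref{alg:mads} sets $\dm = \min\{\dpl,(\dpl)^2\}$, we have $\dm/\dpl = \dpl$ whenever $\dpl \le 1$, and Theorem~\ref{zerothOrdMADStheor} guarantees $\dpl \to 0$ along a subsequence; on that subsequence the rounding error vanishes and the normalized poll directions inherit the density of the Householder columns. (Normalizing $\bm{b}_{j,k}$ by $\norminf{\cdot}$ instead of $\normii{\cdot}$ only rescales it, so the claim is one about directions in $\snOne$, and I establish it for the $\normii$-normalization.)

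For the reflection estimate, I would first record that $\mathbf{H}_k = \bm{I}_n - 2\vi_k\vi_k^\top$ is an orthogonal matrix, so its columns $\bm{h}_{j,k} = \mathbf{H}_k\eb_j$ are orthonormal; in particular $\normii{\bm{h}_{j,k}} = 1$. The crucial point is that, for the fixed index $j=1$, the map $\vi \mapsto (\bm{I}_n - 2\vi\vi^\top)\eb_1$ is continuous and surjective onto $\snOne$: for any target $\ub \in \snOne$ the reflection vector $\vi^* = (\eb_1 - \ub)/\normii{\eb_1 - \ub}$ (and $\vi^* = \eb_2$ in the degenerate case $\ub = \eb_1$) satisfies $(\bm{I}_n - 2\vi^*{\vi^*}^\top)\eb_1 = \ub$, as a one-line computation using $\normii{\eb_1 - \ub}^2 = 2(1 - \scal{\ub}{\eb_1})$ confirms. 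Density of $\{\vi_k\}$ together with continuity then shows $\{\bm{h}_{1,k}\}$ is dense in $\snOne$: given $\ub$ and $\epsilon > 0$, take $\vi^*$ with $\mathbf{H}_{\vi^*}\eb_1 = \ub$, use density to find $\vi_k$ near $\vi^*$, and conclude $\normii{\bm{h}_{1,k} - \ub} < \epsilon/2$.

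For the rounding estimate, set $\beta_k := \dpl/\dm \ge 1$ and $\bar{\bm{h}}_{j,k} := \bm{h}_{j,k}/\norminf{\bm{h}_{j,k}}$, so that $\bm{b}_{j,k} = \text{round}(\beta_k\,\bar{\bm{h}}_{j,k})$. Componentwise rounding perturbs each coordinate by at most $1/2$, hence $\norminf{\bm{b}_{j,k} - \beta_k\bar{\bm{h}}_{j,k}} \le 1/2$ and $\normii{\bm{b}_{j,k} - \beta_k\bar{\bm{h}}_{j,k}} \le \sqrt{n}/2$. Because $\normii{\beta_k\bar{\bm{h}}_{j,k}} = \beta_k\normii{\bar{\bm{h}}_{j,k}} \ge \beta_k$ (using $\normii{\cdot} \ge \norminf{\cdot}$) and the $\normii$-normalization of $\beta_k\bar{\bm{h}}_{j,k}$ equals $\bm{h}_{j,k}$, the elementary bound $\normii{\tfrac{\bm{a}}{\normii{\bm{a}}} - \tfrac{\bm{b}}{\normii{\bm{b}}}} \le \tfrac{2\normii{\bm{a} - \bm{b}}}{\normii{\bm{a}}}$ yields
\[
    \normii{\frac{\bm{b}_{j,k}}{\normii{\bm{b}_{j,k}}} - \bm{h}_{j,k}} \le \frac{\sqrt{n}}{\beta_k} = \sqrt{n}\,\dpl ,
\]
where the last equality uses $\beta_k = 1/\dpl$ for $\dpl \le 1$. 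Thus the normalized poll directions converge to the exact Householder columns along any subsequence with $\dpl \to 0$.

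It remains to combine the two estimates by the triangle inequality: along a subsequence with $\dpl \to 0$ (Theorem~\ref{zerothOrdMADStheor}), for a target $\ub$ I would pick an iteration with $\normii{\bm{h}_{1,k} - \ub} < \epsilon/2$ and $\sqrt{n}\,\dpl < \epsilon/2$, so that the normalized poll direction $\bm{b}_{1,k}/\normii{\bm{b}_{1,k}} \in \Dkp$ lies within $\epsilon$ of $\ub$. The main obstacle I anticipate is precisely this coupling step. The density of $\{\vi_k\}$ is a statement about \emph{all} iterations, whereas the rounding error is only negligible on the subsequence where $\dpl \to 0$; I must therefore guarantee that there exist iterations that simultaneously carry a small $\dpl$ and a direction $\vi_k$ near the required reflection vector $\vi^*$. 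Making this rigorous needs either density of $\{\vi_k\}$ restricted to the refining subsequence or a diagonal selection that interleaves the two limiting processes — reconciling these two densities, rather than the elementary reflection and rounding computations, is the delicate part of the argument.
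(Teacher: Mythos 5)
Your two computational estimates are correct and are exactly the right ingredients: the surjectivity and continuity of $\vi \mapsto (\bm{I}_n - 2\vi\vi^\top)\eb_1$ does transfer density of $\{\vi_k\}$ to the first Householder columns, and your bound $\normii{\bm{b}_{1,k}/\normii{\bm{b}_{1,k}} - \bm{h}_{1,k}} \le \sqrt{n}\,\dm/\dpl = \sqrt{n}\,\dpl$ (for $\dpl \le 1$) correctly quantifies the rounding error. Note that the paper itself gives no proof of this statement --- it only cites \cite[Theorem~8.6]{AuHa2017} --- so there is no in-paper argument to compare against. Judged on its own terms, your proposal is incomplete exactly where you say it is, and the coupling step is not a formality that a routine diagonal argument will close: it is the actual content of the theorem. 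Density of $\{\vi_k\}_{k \in \N}$ over \emph{all} iterations does not imply density of $\{\vi_k\}_{k \in \mathcal{K}}$ along the subsequence $\mathcal{K}$ on which $\dpl \to 0$, and the argument genuinely needs the latter. Concretely, if $\vi_k = \eb_1$ at every iteration with $\dpl < \delta$ while the remaining iterations carry a dense set of $\vi_k$'s, the stated hypothesis is satisfied; yet the iterations with $\dpl \ge \delta$ have $\dpl/\dm$ bounded, hence produce only finitely many integer vectors $\bm{b}_{j,k}$ and thus finitely many normalized directions, while the iterations with $\dpl < \delta$ produce directions clustering at $\pm\eb_j$. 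The union of normalized poll directions is then nowhere near dense, so the conclusion cannot be reached from the two lemmas plus the literal hypothesis alone.

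The resolution in the source is to make the density hypothesis bite on the right subsequence: in the OrthoMADS-type construction underlying \cite[Theorem~8.6]{AuHa2017}, the vector $\vi_k$ is tied to the poll-size level rather than to the iteration counter, which guarantees that $\{\vi_k\}$ restricted to any subsequence along which $\dpl \to 0$ is itself dense in $\snOne$; equivalently, the statement should be read (and is what Corollary~\ref{cor:madsdense} actually consumes, via refining directions) as requiring density of the $\vi_k$ along the refining subsequence. With that strengthened hypothesis your triangle-inequality step closes immediately and the proof is complete; without it, the coupling you flag at the end cannot be established. So the fix is not a cleverer interleaving of the two limits, but a sharper hypothesis: keep both of your lemmas, and replace ``$\{\vi_k\}_{k\in\N}$ dense'' by ``$\{\vi_k\}_{k\in\mathcal{K}}$ dense for the subsequence $\mathcal{K}$ with $\dpl \to 0$'' (or describe the level-indexed construction that enforces it) before combining them.
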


\begin{corollary}
\label{cor:madsdense}
    Let the assumptions of Theorem~\ref{theo:madscdir} hold, and suppose that 
    the set of refining directions for $\hat{\x}$ is dense in the unit sphere. 
    Then $\hat{\x}$ is a Clarke stationary point.
\end{corollary}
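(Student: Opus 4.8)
The plan is to mirror the argument used in the proof of Theorem~\ref{t:cdir}, replacing its appeal to Lemma~\ref{l:cvrefineddir} by the corresponding statement for the mesh-based setting, namely Theorem~\ref{theo:madscdir}. First, I would invoke that theorem: since $\{\xb_k\}_{k \in \mathcal{K}}$ is a refining subsequence with refined point $\hat{\x}$, for every refining direction $\di$ associated with this subsequence one has $f^\circ(\hat{\x}; \di) \geq 0$. This yields nonnegativity of the Clarke directional derivative at $\hat{\x}$ along the (possibly restricted) family of refining directions.

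Next, I would upgrade this to all unit directions using the density hypothesis. Fix an arbitrary $\ub \in \snOne$. Since the set of refining directions is dense in the unit sphere, there exists a sequence $\{\di_i\}_{i \in \N}$ of refining directions with $\di_i \to \ub$, each satisfying $f^\circ(\hat{\x}; \di_i) \geq 0$. The essential analytic ingredient is that, for a Lipschitz continuous $f$, the map $\vi \mapsto f^\circ(\hat{\x}; \vi)$ is itself (Lipschitz) continuous---indeed convex and positively homogeneous by~\cite[Proposition~2.1.1]{Clar83a}. Passing to the limit then gives $f^\circ(\hat{\x}; \ub) = \lim_{i \to \infty} f^\circ(\hat{\x}; \di_i) \geq 0$.

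Finally, I would extend from the sphere to all of $\R^n$. For a nonzero $\vb \in \R^n$, positive homogeneity gives $f^\circ(\hat{\x}; \vb) = \|\vb\|\, f^\circ(\hat{\x}; \vb/\|\vb\|) \geq 0$, while $f^\circ(\hat{\x}; \vzero) = 0$ trivially. Hence $f^\circ(\hat{\x}; \vb) \geq 0$ for every $\vb \in \R^n$, which is precisely Clarke stationarity of $\hat{\x}$ in the sense of Definition~\ref{def:clarkestatio}.

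The main (though mild) obstacle is justifying the limit passage in the second step: one must invoke the continuity of the Clarke directional derivative with respect to its direction argument, which rests on the Lipschitz continuity of $f$ carried over from the hypotheses of Theorem~\ref{theo:madscdir}. Once that is in hand, the remainder is a routine combination of density, positive homogeneity, and the definition of Clarke stationarity---so the corollary is essentially a transcription of Theorem~\ref{t:cdir} into the mesh-based framework.
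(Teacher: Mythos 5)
Your argument is correct and is essentially the one the paper intends: the corollary is left unproved there precisely because it is the mesh-based transcription of Theorem~\ref{t:cdir}, whose proof you reproduce with Theorem~\ref{theo:madscdir} in place of Lemma~\ref{l:cvrefineddir}. You even make explicit the one step the paper glosses over, namely that the limit passage along the dense family of refining directions is justified by the Lipschitz continuity of $\vi \mapsto f^\circ(\hat{\x};\vi)$.
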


Remarkably, a simpler version of the result leads to convergence to a 
stationary point in the presence of smoothness\rev{, which we again emphasize 
is not the usual setup of mesh-based analyzes}. 

\begin{corollary}
\label{cor:madssmooth}
    Under the assumptions of Theorem~\ref{theo:madscdir}, suppose further 
    that $f$ is smooth near a refined point $\hat{\x} \in \R^n$ \rev{and 
    that} the set of refining directions contains a positive spanning set.
    \rev{Then,} $\hat{\x}$ is a stationary point for $f$.
\end{corollary}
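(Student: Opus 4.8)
The plan is to combine the generalized-derivative conclusion of Theorem~\ref{theo:madscdir} with the local smoothness hypothesis, reducing the Clarke directional derivative to an ordinary gradient inner product, and then to exploit the positive spanning property to force the gradient to vanish. Since $\hat{\x}$ is a refined point whose associated set of refining directions contains a positive spanning set, Theorem~\ref{theo:madscdir} applies to each refining direction $\di$ and already yields $f^{\circ}(\hat{\x};\di) \ge 0$.

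First I would invoke the assumption that $f$ is smooth, i.e.\ continuously differentiable, on a neighborhood of $\hat{\x}$. Under this hypothesis $f$ is strictly differentiable at $\hat{\x}$, so the Clarke directional derivative collapses to the classical one: a short computation from Definition~\ref{def:clarkeder}, using the mean value theorem together with the continuity of $\nabla f$ near $\hat{\x}$, shows that
\[
    f^{\circ}(\hat{\x};\di) = \scal{\nabla f(\hat{\x})}{\di} \quad \text{for every } \di \in \R^n
\]
(a standard fact from Clarke calculus, see~\cite{Clar83a}). Substituting this identity into the conclusion of Theorem~\ref{theo:madscdir} gives $\scal{\nabla f(\hat{\x})}{\di} \ge 0$ for every refining direction $\di$.

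Next I would use the positive spanning property. Let $\{\di_1,\dots,\di_m\}$ be a positive spanning set consisting of refining directions, whose existence is guaranteed by the added hypothesis. By Definition~\ref{def:pss}, the vector $-\nabla f(\hat{\x})$ can be written as a nonnegative linear combination $-\nabla f(\hat{\x}) = \sum_{j=1}^{m} \lambda_j \di_j$ with each $\lambda_j \ge 0$. Taking the inner product of both sides with $\nabla f(\hat{\x})$ then yields
\[
    -\n{\nabla f(\hat{\x})}^2 = \sum_{j=1}^{m} \lambda_j \scal{\nabla f(\hat{\x})}{\di_j} \ge 0,
\]
whence $\n{\nabla f(\hat{\x})} = 0$, i.e.\ $\nabla f(\hat{\x}) = \zerob$ and $\hat{\x}$ is stationary.

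I do not expect a genuine obstacle here, since the argument is short once the pieces are in place; the only step demanding care is the identification $f^{\circ}(\hat{\x};\di) = \scal{\nabla f(\hat{\x})}{\di}$, which relies on strict differentiability rather than mere pointwise differentiability. This is precisely why the hypothesis is stated as smoothness \emph{near} $\hat{\x}$: continuity of $\nabla f$ on a whole neighborhood is exactly what upgrades the $\limsup$ in the Clarke derivative to an honest limit equal to $\scal{\nabla f(\hat{\x})}{\di}$.
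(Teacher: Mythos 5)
Your proposal is correct and follows exactly the intended route: the paper states this corollary without a printed proof, and the canonical argument is precisely the one you give — local smoothness upgrades the Clarke derivative to $\scal{\nabla f(\hat{\x})}{\di}$ (via strict differentiability), Theorem~\ref{theo:madscdir} then gives $\scal{\nabla f(\hat{\x})}{\di}\ge 0$ over a positive spanning set of refining directions, and writing $-\nabla f(\hat{\x})$ as a nonnegative combination of those directions forces $\nabla f(\hat{\x})=\zerob$. No gaps; the one step you flag as delicate (needing smoothness \emph{near} $\hat{\x}$ rather than differentiability at the point) is indeed the right thing to be careful about.
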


By design, a mesh-based algorithm aims at generating an asymptotically dense 
set of polling directions, and thus fulfills convergence conditions in both a 
nonsmooth and a smooth setting. In that sense, mesh-based algorithms enjoy 
asymptotic convergence guarantees for a \rev{broad} class of functions. 
Non-asymptotic results, such as complexity results or convergence rates, have 
not been derived for mesh-based methods that rely on simple decrease. Note that 
combining a mesh-based approach with sufficient decrease does lead to complexity 
guarantees, however this combination has only been adopted in the context of 
stochastic optimization (see Section~\ref{ssec:sds}), and remains nonstandard 
in the deterministic case.

\subsection{Line-search algorithms}
\label{ssec:linesearch}

Line-search-based derivative-free methods combine a sufficient decrease 
condition  with a thorough search-direction exploration procedure.  Those 
line-search approaches date back 40 years\rev{, with a first example 
of derivative-free line-search method proposed by De Leone, Gaudioso and 
Grippo~\cite{grippo1988global}, and later analyzed by Grippo, Lampariello and 
Lucidi~\cite{grippo1988global}. Those strategies were then suitably adapted by 
Lucidi and Sciandrone~\cite{lucidi2002global}} to define new globally 
convergent direct-search schemes that make use of an extrapolation step.

\rev{Algorithm \ref{alg:linesearch} provides} an example of exploration 
procedure that performs an extrapolation step once \revn{that} a given trial point 
satisfies the acceptance condition \rev{has been found. Such a step ensures 
improved practical performances by accepting larger stepsizes than those
obtained using the basic strategy of Algorithm \ref{alg:basicDS}.} As it can 
be easily seen by observing the scheme, the extrapolation indeed enables us to 
further explore the direction $\db$ by suitably increasing the stepsize 
\rev{(see Steps 5-7) as long as sufficient decrease is guaranteed.}

\begin{algorithm}[h]
	\caption{Line-search-based DS scheme}
	\label{alg:dse}
	\begin{algorithmic}[1]
		\par\vspace*{0.1cm}
		\STATE \textbf{Inputs:} $\xb_0\in \mathbb{R}^n$, $m \in N$, 
		\revn{$\{\alpha^j_0\}_{j \in [1:m]}$},  $\theta \in (0, 1), \gamma > 1$, 
		a forcing function $\rho: \Rplus \rightarrow \Rplus$.
		\FOR{  $k=0, 1,...$}
		\STATE Select a set of {\rdj p}oll directions 
		$\mathbb{D}_k=\{ \db_k^j\}_{j \in [\revn{1}:m]}$.
		\STATE Set $j(k)=\mod(k,m)$, ${\alpha}_{k + 1}^i = {\alpha}_{k}^i$ 
		for $i \in [\revn{1}:m] \setminus \{j(k)\}$. 
		\STATE Compute $\xb_{k+1}, {\alpha}_{k + 1}^{j(k)}$ with 
		\textbf{Linesearch\_extrapolation}($\xb_k, \db_k^{j(k)},
		\alpha_k^{j(k)}, \theta, \gamma, \rho$).  
		\ENDFOR
  	\par\vspace*{0.1cm}
	\end{algorithmic}
\end{algorithm}

\begin{algorithm}[h]
	\caption{Linesearch\_extrapolation$(\xb, d, \alpha, \theta, \gamma, \rho)$}
	\label{alg:linesearch}
	\begin{algorithmic}[1]
		\par\vspace*{0.1cm}
		\IF{$f(\xb + \rev{\alpha} \db) \geq f(\xb) - \rho(\beta)$} 
		\STATE \textbf{Return} $\xb, \theta \rev{\alpha}$. 
		\ENDIF
		\WHILE{ $f(\xb + \rev{\alpha} \db) < f(\xb) - \rho(\rev{\alpha})$}  
		\STATE \rev{$\alpha \leftarrow \gamma \alpha$}.
		\ENDWHILE
		\STATE \textbf{Return} $(\xb+\frac{\rev{\alpha}}{\gamma} \db,
		\rev{\alpha}/\gamma)$.	
	\end{algorithmic}
\end{algorithm}

\revn{A full 
algorithm that adapts Algorithm~\ref{alg:basicDS} so as to allow for line searches is given in 
Algorithm~\ref{alg:dse}.} 
When using \rev{\revn{line-search techniques}, it becomes necessary to maintain a distinct stepsize} 
for each direction in the poll set $\mathbb{D}_k$, and to ensure some consistency in 
the choice of $\mathbb{D}_k$. \revn{More precisely,
each sequence of directions $\{\bm{d}_k^{j}\}_{k \in \mathbb{N}}$ must satisfy some specific condition like in \cite{lucidi2002global} (e.g., $\{\bm{d}_k^{j}\}_{k \in \mathbb{N}}$ bounded and such that every limit point  $\{ \bm{\bar d} \}_{j \in [1:m]}$ of $\{\bm{d}_k^{j}\}_{j \in [1:m]} $ is a positive spanning set), , and distinct sequences of stepsizes 
$\{\alpha_k^{j}\}_{k \in \mathbb{N}}$ for $j \in [\rev{1}:m]$ must be defined. For the sequence of directions $\{\bm{d}_k^{j}\}_{k \in \mathbb{N}}$,  a natural approach is to use the same direction throughout all iterations. }
Note there is no distinction between successful and unsuccessful steps, 
since every iteration is guaranteed to produce some \revn{point that violates the}  sufficient 
decrease test.

\rev{Provided the function $f$ is smooth \revn{in the sense of Definition~\ref{def:FLsmooth}} and all polling sets correspond to the same 
positive spanning set, one can obtain similar results to that of deterministic 
directional \revn{direct-search methods}~\cite{lucidi2002global}. Thanks to the use of the 
sufficient decrease, any limit point of the sequence generated by 
Algorithm~\ref{alg:dse} is a stationary point.}

\begin{theorem}\label{t:linesearchsmooth}
Suppose that we apply Algorithm~\ref{alg:linesearch} to an $L$-smooth function $f$ 
under Assumptions~{\rdj \ref{ass:L0}, \ref{ass:dirlowup}, \ref{ass:rho}}  
and \ref{ass:cmDk}. Suppose further that sufficient decrease is used. Then,
	\begin{equation}
		\lim_{k \rightarrow \infty} \|\nabla f(\xb_k)\| = 0.
	\end{equation} 
\end{theorem}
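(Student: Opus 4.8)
The plan is to mirror the deterministic directional analysis leading to Theorem~\ref{th:lipgglobal}, with the crucial twist that the extrapolation mechanism of Algorithm~\ref{alg:linesearch} replaces the role played by unsuccessful iterations in Algorithm~\ref{alg:basicDS}. First I would record the standing consequences of the hypotheses. Since $f$ is $L$-smooth it is continuous, so the sublevel set $L_0$ is closed, and Assumption~\ref{ass:L0} makes it compact; as the scheme never increases $f$, every iterate lies in $L_0$, so $f$ is bounded below there by some $f^*$. Compactness also guarantees that each call to Algorithm~\ref{alg:linesearch} terminates, since the extrapolated points satisfy $f<f(\xb_0)$ and hence cannot leave the bounded set $L_0$ while the stepsize is inflated. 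Consequently $\{f(\xb_k)\}$ is nonincreasing and bounded below, hence convergent, and $\sum_k [f(\xb_k)-f(\xb_{k+1})]<\infty$.

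Next I would prove that all stepsizes vanish. Whenever Algorithm~\ref{alg:linesearch} returns an extrapolated point, the returned step $\tilde\alpha_k=\alpha_{k+1}^{j(k)}$ satisfies $f(\xb_{k+1})<f(\xb_k)-\rho(\tilde\alpha_k)$, so telescoping gives $\sum\rho(\tilde\alpha_k)<\infty$ over such iterations; since $\rho$ is nondecreasing and positive on $(0,\infty)$ (Assumption~\ref{ass:rho} with sufficient decrease), this forces $\tilde\alpha_k\to0$ along them. Because each coordinate stepsize $\alpha_k^j$ is updated only when $j(k)=j$ (once every $m$ iterations), being either multiplied by $\theta<1$ on a failed probe or reset to such a vanishing $\tilde\alpha_k$ on an extrapolation, a short case analysis shows $\alpha_k^j\to0$ for every $j\in[1:m]$, whence $\max_j\alpha_k^j\to0$ and $\n{\xb_{k+1}-\xb_k}\le\tilde\alpha_k\,d_{\max}\to0$.

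The key step is to extract, at \emph{every} iteration, a probed stepsize $\beta_k\to0$ at which sufficient decrease fails. If the initial probe fails, then $\beta_k=\alpha_k^{j(k)}$ already yields $f(\xb_k+\beta_k\db_k^{j(k)})\ge f(\xb_k)-\rho(\beta_k)$ with $\beta_k\le\max_j\alpha_k^j\to0$. If instead extrapolation occurs, the while loop stops at $\beta_k=\gamma\tilde\alpha_k$, where by construction $f(\xb_k+\beta_k\db_k^{j(k)})\ge f(\xb_k)-\rho(\beta_k)$ and $\beta_k=\gamma\tilde\alpha_k\to0$. In either case I would feed this failed inequality into the $L$-smooth descent bound exactly as in Lemma~\ref{l:re} (whose proof uses only the failed-decrease inequality and smoothness), obtaining
\[
-\scal{\nabla f(\xb_k)}{\db_k^{j(k)}}\le\frac{L\beta_k}{2}\n{\db_k^{j(k)}}^2+\frac{\rho(\beta_k)}{\beta_k},
\]
whose right-hand side tends to $0$ since $\n{\db_k^{j(k)}}\le d_{\max}$ (Assumption~\ref{ass:dirlowup}) and $\rho(\beta_k)/\beta_k\to0$ (Assumption~\ref{ass:rho}).

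Finally I would promote this single-direction information to the full gradient by reasoning over a window of $m$ consecutive iterations and passing to a limit. Suppose $\n{\nabla f(\xb_k)}\not\to0$; then some subsequence has $\n{\nabla f(\xb_k)}\ge\epsilon_0>0$, and by compactness of $L_0$ a further subsequence satisfies $\xb_k\to\bar\xb$ with $\n{\nabla f(\bar\xb)}\ge\epsilon_0$. Over the indices $k,\dots,k+m-1$ the map $j(\cdot)$ runs through all $m$ poll directions, and $\n{\xb_{k+1}-\xb_k}\to0$ gives $\xb_{k+l}\to\bar\xb$ for each fixed $l$; letting $k\to\infty$ in the displayed bound for each direction (using a fixed poll set, or more generally the consistency requirement that the limit points of $\{\db_k^j\}_{j\in[1:m]}$ form a positive spanning set with cosine measure $\ge\tau$) yields $\scal{\nabla f(\bar\xb)}{\db^j}\ge0$ for all $j$. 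Applying Assumption~\ref{ass:cmDk} with $\vi=-\nabla f(\bar\xb)$ then furnishes a poll direction $\db$ with $-\scal{\nabla f(\bar\xb)}{\db}\ge\tau\,\n{\db}\,\n{\nabla f(\bar\xb)}$, which together with $\scal{\nabla f(\bar\xb)}{\db}\ge0$ forces $\n{\nabla f(\bar\xb)}=0$, a contradiction. Hence $\lim_{k\to\infty}\n{\nabla f(\xb_k)}=0$. I expect the main obstacles to be the careful bookkeeping that produces a vanishing \emph{failed} stepsize on extrapolation iterations, and the combination of the per-direction bounds over a cycle, which relies simultaneously on $\n{\xb_{k+1}-\xb_k}\to0$, continuity of $\nabla f$, and the positive-spanning property of the limiting directions.
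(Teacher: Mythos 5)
The paper states Theorem~\ref{t:linesearchsmooth} without proof, deferring to Lucidi and Sciandrone~\cite{lucidi2002global}, and your argument is a correct reconstruction of exactly that standard route: drive every per-direction stepsize to zero via the sufficient-decrease telescoping, extract at \emph{every} iteration a failed probe stepsize (the initial rejection or the extrapolation termination point), feed it into the $L$-smooth bound as in Lemma~\ref{l:re}, and combine the resulting per-direction inequalities over a cycle of $m$ iterations at a common limit point via the cosine measure. Your parenthetical caveat is also well placed: Assumption~\ref{ass:cmDk} alone does not literally cover varying poll sets (one direction is taken from each of $m$ different sets $\bbD_{k},\dots,\bbD_{k+m-1}$), so one needs either a fixed positive spanning set, as the paper's surrounding discussion assumes, or the Lucidi--Sciandrone condition that the limit points of the direction sequences themselves form a positive spanning set.
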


\rev{In addition to global convergence results, worst-case complexity bounds 
have recently been derived for line-search techniques in the smooth setting. 
Brilli et al.~\cite{brilli2023worst} report iteration complexity bounds and 
evaluation complexity bounds of $\cO(n\epsilon^{-2})$ and 
$\cO(n^2 \epsilon^{-2})$, respectively, for a particular version of a 
line-search method. Interestingly, the most efficient version in practice 
is endowed with complexity bounds that are worse by a factor of $n$.}

\rev{To establish convergence of Algorithm~\ref{alg:dse} on nonsmooth problems, 
the positive spanning set assumption is replaced by an asymptotic density 
assumption on the polling directions, akin to that made for mesh-based 
techniques. In particular, the choice $\mathbb{D}_k = \{\gb_k\}$, where 
$\gb_k$ is a random direction in the unit sphere, satisfies this assumption, and 
leads to the following result that corresponds to a special case of the analysis 
in Diouane et al.~\cite{diouane2023inexact}.}
\begin{theorem} \label{t:cdir2}
    Suppose that we apply Algorithm~\ref{alg:linesearch} to a Lipschitz 
    continuous function $f$, under Assumptions~\ref{ass:fbound} and 
    \ref{ass:rho}, with sufficient decrease being used. 
    Let $\{\xb_k\}_{k \in \mathcal{K}}$ be a refining subsequence for the algorithm 
    with refined point $\revn{ \hat{\xb}}$ such that $\bbD_k=\{\gb_k\}$ for every 
    $k \in \mathcal{K}$\revn{, where $\mathcal{K}\subset \N$ denotes a subsequence of unsuccessful iterations}. If any subsequence of $\{\bbD_k\}_{k \in \mathcal{K}}$ is dense 
    in the unit sphere, then $\revn{ \hat{\xb}}$ is a Clarke stationary point.
\end{theorem}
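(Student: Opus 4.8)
The plan is to reduce the statement to Lemma~\ref{l:cvrefineddir} and then follow the argument of Theorem~\ref{t:cdir}, with the single new point being that the extrapolation mechanism of Algorithm~\ref{alg:linesearch} does not interfere with the key unsuccessful-iteration inequality.

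First, I would pin down what an unsuccessful iteration means for the line-search scheme. An iteration $k \in \mathcal{K}$ leaves the incumbent unchanged precisely when the first conditional of Algorithm~\ref{alg:linesearch} is triggered, that is, when the acceptance test $f(\xb_k + \alpha_k \gb_k) < f(\xb_k) - \rho(\alpha_k)$ fails. Rearranging this failure and dividing by $\alpha_k > 0$ yields
\begin{equation*}
	\frac{f(\xb_k + \alpha_k \gb_k) - f(\xb_k)}{\alpha_k} \ge -\frac{\rho(\alpha_k)}{\alpha_k},
\end{equation*}
which is exactly~\eqref{eq:unsuccgen} for the single direction $\gb_k$. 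This is the crux: although Lemma~\ref{l:cvrefineddir} is phrased for Algorithm~\ref{alg:basicDS}, its proof relies only on~\eqref{eq:unsuccgen} holding at unsuccessful iterations, and I have just verified that this inequality survives the while-loop extrapolation. Hence Lemma~\ref{l:cvrefineddir} applies verbatim, and for every refining direction $\db$ of the refining subsequence $\{\xb_k\}_{k \in \mathcal{K}}$ we obtain $f^\circ(\hat{\xb}; \db) \ge 0$.

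Next, I would translate the density hypothesis into density of the refining directions. Since each polling set $\bbD_k = \{\gb_k\}$ is a singleton, a refining direction is nothing but an accumulation point of the normalized sequence $\{\gb_k / \|\gb_k\|\}_{k \in \mathcal{K}}$; the assumption that every infinite subsequence of $\{\bbD_k\}_{k \in \mathcal{K}}$ is dense in the unit sphere guarantees that every point of $\mathbb{S}^{n-1}$ is such an accumulation point, so the set of refining directions is dense. I would then conclude exactly as in Theorem~\ref{t:cdir}: given any nonzero $\ub \in \R^n$, choose refining directions $\db_i \to \ub/\|\ub\|$, use $f^\circ(\hat{\xb}; \db_i) \ge 0$ together with the continuity (indeed Lipschitz continuity, for a locally Lipschitz $f$) of $\vi \mapsto f^\circ(\hat{\xb}; \vi)$ to pass to the limit and get $f^\circ(\hat{\xb}; \ub/\|\ub\|) \ge 0$, and invoke positive homogeneity of the Clarke directional derivative to deduce $f^\circ(\hat{\xb}; \ub) \ge 0$. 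As $\ub$ is arbitrary, $\hat{\xb}$ is Clarke stationary in the sense of Definition~\ref{def:clarkestatio}.

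The main obstacle is the first step: correctly identifying the no-move branch of the line-search procedure as the analogue of an unsuccessful iteration and checking that~\eqref{eq:unsuccgen} is preserved despite the extrapolation. Once this is settled, the remainder is a routine transcription of the Theorem~\ref{t:cdir} argument, and Assumptions~\ref{ass:fbound} and~\ref{ass:rho} enter in two distinct roles: lower boundedness guarantees that the while-loop extrapolation in Algorithm~\ref{alg:linesearch} terminates, so that the scheme is well-defined, while the forcing-function conditions are precisely what Lemma~\ref{l:cvrefineddir} requires. A secondary subtlety worth stating carefully is the reading of the density hypothesis, which must be strong enough (density of every infinite subsequence) to ensure that the refining directions exhaust a dense subset of the sphere rather than merely accumulate somewhere.
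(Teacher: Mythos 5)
Your proof is correct and follows essentially the same route as the paper: the survey states this theorem without proof (deferring to Diouane et al.), but its own proof of the directly analogous Theorem~\ref{t:cdir} is exactly your argument---identify the no-move branch of the line search as an unsuccessful iteration yielding inequality~\eqref{eq:unsuccgen}, invoke Lemma~\ref{l:cvrefineddir}, and conclude via density of the refining directions together with continuity and positive homogeneity of $f^\circ(\hat{\xb};\cdot)$. The only slight imprecision is the side remark that Assumption~\ref{ass:fbound} alone guarantees termination of the extrapolation loop (one also needs $\rho$ to be unbounded, as it is for the standard choices $\rho(\alpha)=c\,\alpha^q$ with $q>1$), but since the theorem presupposes the existence of the refining subsequence this does not affect the argument.
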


\rev{Finally, we comment on the use of non-monotone acceptance rules, an 
important feature of line-search techniques that can significantly enhance 
both robustness and efficiency of those methods, while also applying to 
other directional direct-search techniques. A non-monotone counterpart 
to the sufficient decrease condition can be defined as 
follows~\cite{grippo2015class}.}

\begin{definition}\label{def:nmdecr}
    Let $M$ be a positive integer. 
    A \textit{non-monotone sufficient decrease} condition for 
    Algorithm~\ref{alg:basicDS} or Algorithm~\ref{alg:linesearch} consists 
    \revn{of}
    replacing~\eqref{eq:basicDSdec} by 
    \begin{equation*}
    \label{eq:nmdecr}
            f(\xb_k + \alpha_k \db_k) 
            < f_k - \rho(\alpha_k), 
            \quad 
            f(\xb_k) \le f_k \le \max_{j \in [0,\min(k,M)]} \{f(\xb_{k-j})\}.
    \end{equation*}
\end{definition}
\rev{Grippo and Rinaldi \cite{grippo2015class} incorporate those 
non-monotone strategies into a wide range of algorithms that make use of 
simplex gradients, thus promoting an extensive integration of direct-search 
and model-based approaches. Standard \revn{direct-search techniques have} also been combined 
with the \revn{non-monotone} principle~\cite{gasparini2001nonmonotone}. Other
non-monotone line-search derivative-free algorithms have been propose by 
Diniz-Ehrhardt et al{\rdj .}~\cite{diniz2008derivative} and Garc\'ia-Palomares and 
Rodr\'iguez~\cite{garcia2002new}. In all cases, a global convergence result 
similar to that of Theorems~\ref{th:liminfdirds} or~\ref{t:cdir2} is 
established.}

\section{Handling noisy and stochastic values}
\label{sec:stoch}

This section focuses on direct-search methods for unconstrained optimization 
problems where the objective function cannot be evaluated exactly due to noise in 
the calculation. Standard direct-search methods such as those described in 
Section~\ref{sec:stoch} have long been observed to be robust to noise up to a 
certain magnitude associated with the stepsize~\cite{KoLeTo03a}. However, provably handling noise in function evaluations requires {\rdj modifying the basic direct-search framework to establish} (probabilistic) convergence guarantees.
Our goal in this section is to provide the reader with key approaches 
in this very active area of research.

%

\subsection{\revn{Direct-search methods with controlled} noise}
\label{ssec:boundednoise}

In this section, we consider {\rdj P}roblem~\eqref{eq:uncpb} under a \revn{controlled} noise model. 
At any point $\xb \in \R^n$, any query for the true function value $f(\xb)$ returns 
a noisy evaluation \revn{$f({\rdj \x},\ubar{\zeta})$,
\footnote{We recall from Section~\ref{sec:algosdef} that we underline quantities \revn{or objects}
that depend on some form of noise, although that noise may be deterministic in 
nature.} 
where $\ubar{\zeta}$ captures the problem noise, and $f({\rdj \x},\ubar{\zeta})$ represents the noisy function values that can be queried by the algorithm.}
\revn{A controlled noise model assumes that 
such that $|f({\rdj \x},\ubar{\zeta})-f(\xb)|$ is uniformly bounded} by a fixed value called  \emph{noise level}. This noise level typically prevents the method from 
converging below a certain accuracy, yet it is often possible to guarantee 
convergence up to that threshold.

The \revn{controlled} noise model has been analyzed in various ways in the direct-search 
literature. \revn{Kelley's implicit filtering method is a directional direct-search algorithm} equipped with global 
convergence guarantees that depend on the noise level~\cite{kelley2011implicit}. 
The thesis of Pauwels follows a similar approach to derive complexity 
guarantees~\cite[Chapter 3]{pauwels2016optimisation}. 
\rev{In line-search techniques,} Lucidi and Sciandrone~\cite{LuSc02b} consider a 
line-search algorithm similar to those of Section~\ref{ssec:linesearch} under 
the assumption that the noise level at every point considered at iteration 
$k$ is bounded by a deterministic value $\Delta_k>0$. Provided the sequence 
$\left\{\tfrac{\Delta_k}{\min_{j \in \revn{\{1,|\bbD_k|\}}} \ubar{\ssize}_{k, j}}\right\}$ 
\revnn{(where $\underline{\alpha}_{k,j}$ denotes the stepsize at iteration $k$ for 
direction $j$)}
converges to $0$ as $k\rightarrow \infty$, convergence to stationarity points 
is established on smooth problems.

\rev{In the bilevel setting, Diouane et al.~\cite{diouane2023inexact} describe 
directional and mesh-based approaches motivated by bilevel optimization to 
handle \revn{controlled} noise of the form 
$|\ubar{f}(\xb) - f(\xb)| \leq \revn{\varepsilon_f}$ for some \revn{noise level} 
$\revn{\varepsilon_f} > 0$. By using a 
lower bound $\ssize_{\min}$ on the stepsize, and assuming that the method 
stops at the first iteration $k$ satisfying $\ubar{\ssize}_k = \ssize_{\min}$, it 
is possible to provide complexity guarantees for the method. In the smooth case, {\rdj denoting by~$\ubar{\xb}_k$ the current iterate,}
one has }
\[
    \n{\nabla f(\ubar{\xb}_k)} 
    = 
    \cO\left(\frac{\revn{\varepsilon_f}}{\ssize_{\min}}+\ssize_{\min}\right)
\]
after at most $\cO(\ssize_{\min}^{-2})$ iterations. \rev{By setting
$\ssize_{\min}=\cO(\revn{\varepsilon_f}^{1/2})$, one guarantees a gradient accuracy of  
order $\cO(\revn{\varepsilon_f}^{1/2})$ in $\cO(\revn{\varepsilon_f}^{-1})$ iterations. In the nonsmooth 
case, the method converges asymptotically to a 
$(\alpha_{\min},\cO(\revn{\varepsilon_f}/\alpha_{\min} + \alpha_{\min}))$-Goldstein
stationary point in the sense of Definition~\ref{def:approxgoldstein}. 
Setting again $\ssize_{\min}=\cO(\revn{\varepsilon_f}^{1/2})$ guarantees convergence to 
$( {\rdj \cO}(\sqrt{\revn{\varepsilon_f}}), {\rdj \cO}(\sqrt{\revn{\varepsilon_f}}))$-Goldstein stationary points.} \revnn{Note that the noise is stochastic in nature, but the uniform bound is deterministic. As a result, the aforementioned complexity results hold for every realization of the method.}

\rev{Finally, we mention another approach that has been used to deal with 
\revn{controlled} noise in stochastic optimization. Berahas et 
al.~\cite{berahas2019derivative,berahas2021global} proposed line-search 
methods based on a relaxed Armijo condition that explicitly involves the noise 
level value. In a smooth setting, these methods can converge to an approximate 
stationary point with gradient norm of order $\cO(\sqrt{\revn{\varepsilon_f}})$ where 
$\revn{\varepsilon_f}>0$ is the noise level. Although these techniques typically employ 
a gradient approximation, a similar approach could be applied in direct 
search by using a forcing function of the form 
$\rho(\underline{\ssize}-\cO(\revn{\varepsilon_f}))$.}

\subsection{\revn{Direct-search methods} for general random noise}
\label{ssec:sds}

In this section, we consider more general \rev{forms of noise on function values 
that are not necessarily bounded. A typical formulation of the problem in that 
setting is}
\begin{equation}\label{eq:stochpb}
	\underset{\x\in\rn}{\min} f(\x)
	=\Esp_{{\rd \ubar{\zeta}}}\left[f(\x,\ubar{\zeta})\right],
\end{equation}
\revn{and we again assume that the algorithm can only evaluate noisy 
function values of the form $f({\rdj \x},\ubar{\zeta})$.}
\rev{Problem~\eqref{eq:stochpb} has attracted significant attention in the 
derivative-free optimization community beyond \revn{the direct-search paradigm}. 
Under smoothness assumptions on $f$ and $f(\cdot,\ubar{\zeta})$,} line-search 
techniques that employ a stochastic estimation of the gradient $\nabla f(\x)$ have 
been investigated~\cite{berahas2021global,RBSWlong21,miaoSchNeur2021,
paquette2018stochastic}. Interestingly, recent proposals of that form, termed 
``step search'', modify the search direction along with the stepsize, thus 
bringing them closer to direct-search methods in spirit~\cite{RBSWlong21}. 
Extensions of the direct-search schemes reviewed in Section~\ref{sec:stoch} 
have also been proposed to tackle {\rdj P}roblem~\eqref{eq:stochpb}~\cite{AlAuBoLed2019,
audet2019stomads,Ch2012,dzahini2020expected,dzahini2020constrained,DzaWildSubDir24,
rinaldi2023stochastic}. 
Audet et al.~\cite{audet2019stomads} introduced StoMADS, \rev{which we 
view as a stochastic variant of Algorithm~\ref{alg:mads} for the purpose of 
this survey.} At every iteration, this method uses estimates 
\rev{$\Fok\approx f(\ubar{\xb}_k)$ and $\Fsk\approx f(\ubar{\xb}_k+\s)$ that 
are computed using the noisy function $f(\cdot,{\rdj \ubar{\zeta}})$ or function 
values computed for different values of the noise parameter $\ubar{\zeta}$.}
One drawback of using such estimates is that the simple decrease condition 
$\Fsk<\Fok$ does not necessarily imply 
$f(\ubar{\xb}_k+\ubar{\s})<f(\ubar{\xb}_k)$. To circumvent this issue, Audet et 
al.~\cite{audet2019stomads} {\rdj used} so-called $\ef$-accurate estimates, 
inspired by derivative-free trust-region algorithms~\cite{chen2018stochastic} 
and proved that a \emph{sufficient} decrease in such estimates guarantees an 
improvement in $f$. This notion \rev{is stated below for a realization of the 
algorithm at hand (hence all quantities \revn{or objects considered} are deterministic).}

\begin{definition}{\rev{(\cite[Definition~1]{audet2019stomads} and
\cite[Definition~1]{dzahini2020expected}.)}}
\label{def:efacc}
    Consider a realization of Algorithm~\ref{alg:mads} applied to 
    {\rdj P}roblem~\eqref{eq:stochpb}, and let $\xk$ be the incumbent at iteration 
    $k$. Then, for any $\bm{s} \in \R^n$ and any $\ef>0$, a value $f_k^{\bm{s}}$ 
    is called an $\ef$-accurate estimate of $f(\xb+\bm{s})$ for~$\dpl$ if 
	\[
		\abs{f_k^{\bm{s}}-f(\xk+\bm{s})}
		\leq 
		\ef(\dpl)^2.
	\]
\end{definition}
\revn{Similar} to the analysis in Section~\ref{sssec:directionalrandomsmooth}, 
\rev{probabilistic satisfaction of the accuracy property can be considered.}

\begin{definition}
	A sequence $\accolade{\Fok, \Fsk}$ of random estimates produced by the stochastic version of Algorithm~\ref{alg:mads} is called $\beta$-probabilistically $\ef$-accurate with respect to the corresponding sequence $\accolade{\Xk,\Sk,\Dplj}$, if
	\begin{equation}\label{probAccEstDef1}
		\prob{\accolade{\abs{\Fok-f(\Xk)}\leq \ef(\Dplj)^2}\cap \accolade{\abs{\Fsk-f(\Xk+\ubar{\bm{s}_k})}\leq \ef(\Dplj)^2}|\F_{k-1}}\geq \beta.
	\end{equation}
\end{definition}

The convergence analysis of StoMADS uses the following key assumptions on the nature of the stochastic information.

\begin{assumption}\label{StochAssumptStom}
	The sequence $\accolade{\Fok, \Fsk}$ of random estimates 
	\rev{is $\beta$-probabilistically $\ef$-accurate for some $\beta\in (1/2, 1)$. 
	Moreover, for any $k \ge 0$, one has
	\begin{equation}
		\E{\abs{\Fsk-f(\Xk+\Sk)}^2|\F_{k-1}} \le \kappa_f^2(\underline{\ssize}_k^{\revn{p}})^4
		\quad \mbox{and} \quad
		\E{\abs{\Fok-f(\Xk)}^2|\F_{k-1}} \le \kappa_f^2(\underline{\ssize}_k^{\revn{p}})^4
	\end{equation}
	almost surely for some $\kappa_f \ge 0$.}
\end{assumption}

\rev{Random estimates satisfying Assumption~\ref{StochAssumptStom} can easily be 
obtained when the function estimates have bounded variance, i.e. there exists 
$V>0$ such that 
$\mathbb{V}_{{\bl \ubar{\zeta}}}\left[f(\x,\ubar{\zeta})\right]\le V$ for all 
$\x\in\rn$. Under this assumption, one construct{\rdj s} the estimates by averaging 
independent samples as follows:
\begin{equation}
\label{eq:estimreplications}
	\Fok:=\frac{1}{p_k}\sum_{i=1}^{p_k}f(\Xk,{{\bl \ubar{\zeta}}}_i^{\bm{0}})
	\qquad\mbox{and}\qquad 
	\Fsk:=\frac{1}{p_k}\sum_{i=1}^{p_k}f(\Xk+\Sk,{{\bl \ubar{\zeta}}}_i^{\bm{s}}).
\end{equation}
Provided $p_k\geq \frac{V}{\ef^2(1-\sqrt{\beta})(\dpl)^4}$, the 
estimates~\eqref{eq:estimreplications} satisfy Assumption~\ref{StochAssumptStom} 
with $\kappa_f^2 \ge \ef^2(1-\sqrt{\beta})$}.

\rev{Drawing inspiration again on derivative-free trust-region 
methods~\cite{blanchet2016convergence,chen2018stochastic,paquette2018stochastic}}, 
the analysis of StoMADS \rev{provides a condition on the probability $\beta$ for 
guaranteeing convergence.} \revnn{This condition depends on an auxiliary parameter $\nu$ corresponding to the Lyapunov function $\nu(f({\xb}_k)-\min_{\mathbf{x} \in \R^n}f(\xb))+(1-\nu)\{\dpl\}^2$. The requirements on $\nu$ (and thus $\beta$) arise naturally from the analysis, and as such are difficult to draw intuition from. On the other hand, we note that the bounds~\ref{nubetachoiceStoMADS} can  be estimated for specific parameter values~\cite[Remark 4.13]{chen2018stochastic}.}

\begin{theorem}{~\cite[Theorem~1 and Corollary~1]{audet2019stomads}}
\label{convSeriesStom}
	Under Assumptions~\ref{ass:fbound} and~\ref{StochAssumptStom}, if 
	$\nu\in (0,1)$ and $\beta\in (1/2,1)$ satisfy
	\begin{equation}\label{nubetachoiceStoMADS}
		\frac{\nu}{1-\nu}\geq \frac{2({\bl \theta}^{-4}-1)}{\ef(\gamma-2)}
		\qquad\mbox{and}\qquad 
		\frac{\beta}{\sqrt{1-\beta}}\geq\frac{4\nu\kappa_f}{(1-\nu)(1-{\bl \theta^{2}})},
	\end{equation}
	the sequence $\accoladekinN{\Dm}$ of random mesh size parameters satisfies 
	$\sum_{k=0}^{\infty}\Dm<\infty$ almost surely. Consequently, 
	$\underset{k\to+\infty}{\lim}\Dm=0$ almost surely.
\end{theorem}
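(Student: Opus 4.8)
The plan is to study the nonnegative random Lyapunov function
\[
	\Phi_k := \nu\bigl(f(\Xk) - f^*\bigr) + (1-\nu)(\Dpl)^2,
\]
which is measurable with respect to $\F_{k-1}$ (since $\Xk$ and $\Dpl$ are determined by the first $k$ iterations) and nonnegative by Assumption~\ref{ass:fbound}. The core of the argument is to prove that the two requirements in~\eqref{nubetachoiceStoMADS} force a uniform expected decrease: there is a constant $C>0$ such that
\[
	\E{\Phi_{k+1} \mid \F_{k-1}} \le \Phi_k - C(\Dpl)^2 \qquad \text{for all } k.
\]
Once this supermartingale-type inequality is in hand, summation yields the claim.

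First I would derive the one-step inequality by conditioning on whether iteration $k$ is \emph{true} (its estimates $\Fok,\Fsk$ are both $\ef$-accurate in the sense of Definition~\ref{def:efacc}) and whether it is \emph{successful} (the stochastic sufficient-decrease test $\Fsk < \Fok - \gamma\ef(\Dpl)^2$ passes), giving four cases. On a \emph{true successful} iteration, chaining the test with the two accuracy bounds $|\Fok - f(\Xk)| \le \ef(\Dpl)^2$ and $|\Fsk - f(\Xkun)| \le \ef(\Dpl)^2$ produces a genuine objective decrease $f(\Xkun) \le f(\Xk) - \ef(\gamma-2)(\Dpl)^2$; the first inequality in~\eqref{nubetachoiceStoMADS} is exactly what makes this $\nu$-weighted decrease dominate the $(1-\nu)$-weighted growth of the $(\Dpl)^2$ term upon expansion. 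On any \emph{unsuccessful} iteration the iterate is frozen, so $f(\Xkun)=f(\Xk)$ while the stepsize term contracts by $(1-\theta^2)(\Dpl)^2$, which decreases $\Phi_k$ outright. Aggregating these over the conditional probabilities, and using that a true iteration occurs with conditional probability at least $\beta > 1/2$, assembles the desired net decrease.

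The delicate case, which I expect to be the main obstacle, is a \emph{false successful} iteration: the step is accepted on inaccurate estimates and $f$ may in fact increase, with no deterministic control available. Here I would bound $f(\Xkun) - f(\Xk) \le |f(\Xkun) - \Fsk| + \bigl(\Fok - f(\Xk)\bigr)$, using $\Fsk - \Fok < 0$ on a successful step, and then estimate the conditional expectation of this quantity restricted to the false event by Cauchy--Schwarz together with the second-moment bounds of Assumption~\ref{StochAssumptStom}. This yields a possible increase of order $\kappa_f\sqrt{1-\beta}\,(\Dpl)^2$, and the second inequality in~\eqref{nubetachoiceStoMADS} is precisely the condition ensuring that this $\nu$-weighted increase is outweighed by the reliable $(1-\nu)(1-\theta^2)(\Dpl)^2$ decrease coming from failures, so that $C>0$.

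Finally, taking total expectations in the one-step inequality (so that $\E{\Phi_k} \le \Phi_0 < \infty$ for all $k$ by induction) and summing over $k = 0,\dots,K$ gives
\[
	C\sum_{k=0}^{K} \E{(\Dpl)^2} \le \E{\Phi_0} - \E{\Phi_{K+1}} \le \Phi_0,
\]
since $\Phi_{K+1} \ge 0$ and $\Phi_0$ is deterministic. Letting $K \to \infty$ and invoking the monotone convergence theorem gives $\E{\sum_{k=0}^{\infty}(\Dpl)^2} < \infty$, hence $\sum_k (\Dpl)^2 < \infty$ almost surely. Because the mesh update enforces $\Dm = \min\{\Dpl,(\Dpl)^2\} \le (\Dpl)^2$, we conclude $\sum_k \Dm < \infty$ almost surely, and in particular $\Dm \to 0$ almost surely, which is the assertion.
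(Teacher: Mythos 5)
Your proposal is correct and follows essentially the same route as the argument the paper cites from the StoMADS reference: the Lyapunov function $\nu\bigl(f(\Xk)-f^*\bigr)+(1-\nu)(\Dpl)^2$ is exactly the one the survey describes immediately before the theorem, and the decomposition into true/false and successful/unsuccessful iterations, the Cauchy--Schwarz control of false successes via the second-moment bounds of Assumption~\ref{StochAssumptStom}, and the concluding summation with monotone convergence followed by $\Dm\le(\Dpl)^2$ are the ingredients of the original proof. The only detail your sketch leaves open is the exact provenance of the constants in~\eqref{nubetachoiceStoMADS} (e.g.\ $\theta^{-4}-1$ where your accounting would naturally produce $\theta^{-2}-1$), but since the stated hypotheses are at least as strong as what your decomposition requires, this does not affect correctness.
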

The last result of Theorem~\ref{convSeriesStom} shows that on an event of 
probability one, the meshes used by StoMADS become infinitely fine. \rev{Such a 
zeroth-order convergence result is stronger than that of Algorithm~\ref{alg:mads} 
(recall Theorem~\ref{zerothOrdMADStheor}), and more generally of mesh-based 
techniques that do not rely upon sufficient decrease. In fact, sufficient 
decrease appears necessary to obtain convergence and complexity results not 
only for \revn{direct-search algorithms}, but also for other derivative-free 
techniques~\cite{chen2018stochastic}.}

\rev{The existence of a convergent refining subsequence was established for 
StoMADS under the assumption that all the iterates generated by the method 
belong to a compact set~\cite[Theorem~2]{audet2019stomads}. Due to possibly 
inaccurate estimates, the algorithm can accept points that increase the 
actual function value, and may thus venture outside the initial level set. 
As a result, Assumption~\ref{ass:L0} cannot be used to establish convergence. 
Still, a compactness assumption yields the desired result.}
\begin{theorem}\label{refConvStoM}
    \rev{Under the assumptions of Theorem~\ref{convSeriesStom}, suppose 
    that all the iterates generated by StoMADS lie in a compact set for 
    all realization. Then, there exists a refining subsequence 
    $\accolade{\Xk}_{k\in \mathcal{K}}$ defined by a random index set $\mathcal{K}$ 
    that converges almost surely to a refined point $\hat{\ubar{\x}}$.}
\end{theorem}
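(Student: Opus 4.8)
The plan is to argue pathwise on the probability-one event on which the mesh size parameters vanish, and then to invoke the compactness hypothesis to extract the desired subsequence. First I would appeal to Theorem~\ref{convSeriesStom}, which guarantees that $\lim_{k\to\infty}\dm=0$ on an event $E$ of probability one. Working on a fixed realization in $E$, the first step is to transfer this decay to the poll size parameters: since $\dm=\min\{\dpl,(\dpl)^2\}$, if $\dpl$ did not tend to zero there would be a subsequence along which $\dpl\geq c$ for some $c\in(0,1)$, forcing $\dm\geq c^2>0$ along that subsequence and contradicting $\dm\to 0$; hence $\dpl\to 0$ on $E$ as well.

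Second, I would show that each realization in $E$ contains infinitely many unsuccessful iterations. The poll size update in Algorithm~\ref{alg:mads} multiplies $\dpl$ by $\theta^{-1}>1$ at successful iterations and by $\theta\in(0,1)$ at unsuccessful ones. If the unsuccessful iterations were only finitely many, then beyond some index all iterations would be successful and $\dpl$ would grow geometrically to infinity, contradicting $\dpl\to 0$. Let $\mathcal{U}$ denote the (realization-dependent) set of unsuccessful iterations, which is therefore infinite on $E$.

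Third, I would extract the refining subsequence. By hypothesis all iterates lie in a compact set for every realization, so the sequence $\{\Xk\}_{k\in\mathcal{U}}$ is bounded; by the Bolzano--Weierstrass theorem it admits a convergent subsequence indexed by some infinite $\mathcal{K}\subseteq\mathcal{U}$, with limit $\hat{\ubar{\x}}$. Along $\mathcal{K}$ every iteration is unsuccessful, the iterates converge to $\hat{\ubar{\x}}$, and $\dm\to 0$ (this already holds along the full sequence on $E$, hence along $\mathcal{K}$). By Definition~\ref{def:refining}, $\{\Xk\}_{k\in\mathcal{K}}$ is then a refining subsequence with refined point $\hat{\ubar{\x}}$. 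Since this construction is carried out on the probability-one event $E$, the conclusion holds almost surely.

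The main obstacle I anticipate is not analytical but measure-theoretic: the index set $\mathcal{K}$ and the refined point $\hat{\ubar{\x}}$ are produced by a pathwise Bolzano--Weierstrass extraction, so some care is required to argue that $\mathcal{K}$ can be realized as a genuine random index set, i.e. chosen in a measurable fashion, rather than merely pathwise. This is typically handled by a standard measurable-selection argument, for instance by fixing a canonical accumulation point and defining $\mathcal{K}$ through first-hitting-time indices of shrinking neighborhoods of it, and it is the only point where the stochastic nature of the process intervenes beyond the almost-sure statement inherited from Theorem~\ref{convSeriesStom}.
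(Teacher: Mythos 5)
Your argument is correct and follows essentially the same route as the proof the paper points to (Theorem~2 of Audet et al.\ for StoMADS): almost-sure vanishing of the mesh size from the summability result, deduction that unsuccessful iterations occur infinitely often since the poll size parameter only shrinks on those, and a Bolzano--Weierstrass extraction inside the compact set containing the iterates. Your closing remark on the measurability of $\mathcal{K}$ and $\hat{\ubar{\x}}$ is a fair observation; the cited proof treats this point with the same level of informality, so nothing further is required.
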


\rev{Finally, the analysis of StoMADS provides a stationarity result for the 
nonsmooth setting, akin to Theorem~\ref{theo:madscdir}.}

\begin{theorem}
\label{th:cvstomadsnonsmooth}
	\rev{Let the assumptions of Theorem~\ref{refConvStoM} hold. Then, there exists 
	an almost-sure event $\mathscr{E}$ such that, conditioned on $\mathscr{E}$, 
	for any refined point $\hat{\ubar{\x}}\in\rn$ and any refining direction 
	$\ubar{\hat{\di}}\in\rn$ for $\hat{\ubar{\x}}$, 
	one has 
	\[
		f^{\circ}\left(\hat{\ubar{\x}}; \ubar{\hat{\di}}\right)
		\geq 0.
	\]
	}
\end{theorem}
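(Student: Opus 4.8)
The plan is to transpose the nonsmooth deterministic argument behind Theorem~\ref{theo:madscdir} (itself a consequence of Lemma~\ref{l:cvrefineddir}) to the stochastic setting, the only genuinely new difficulty being that the algorithm never accesses the true values $f(\Xk)$, $f(\Xk+\Sk)$ but only the estimates $\Fok$, $\Fsk$. Accordingly, I would first isolate a single favorable event $\mathscr{E}$ on which (i) $\Dm\to 0$, so that a refining subsequence with refined point $\hat{\ubar{\x}}$ exists by Theorems~\ref{convSeriesStom} and~\ref{refConvStoM}, and (ii) the estimates are $\ef$-accurate at infinitely many unsuccessful iterations. Conditioned on $\mathscr{E}$ one then reasons pathwise, exactly as in a deterministic proof.

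The core estimate is the stochastic analogue of~\eqref{eq:unsuccgen}. Suppose iteration $k$ is unsuccessful and the estimates are $\ef$-accurate. Failure of the sufficient-decrease test means $\Fsk \ge \Fok - \eta(\Dpl)^2$ for every polled step $\Sk=\Dm\di$, $\di\in\Dkp$, where $\eta>0$ is the sufficient-decrease parameter. Combining this with the two accuracy inequalities $f(\Xk)\le \Fok+\ef(\Dpl)^2$ and $f(\Xk+\Sk)\ge \Fsk-\ef(\Dpl)^2$ of Definition~\ref{def:efacc} yields
\[
    f(\Xk+\Sk)-f(\Xk)\ \ge\ -(\eta+2\ef)(\Dpl)^2 .
\]
The rounding in Algorithm~\ref{alg:polldir} guarantees $\norminf{\di}\ge \tfrac12\,\Dpl/\Dm$ for $\Dpl$ small enough, hence $\norminf{\Sk}=\Dm\norminf{\di}\ge \tfrac12\Dpl$; dividing the previous display by $\norminf{\Sk}$ gives an increment ratio bounded below by $-\cO(\Dpl)$. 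Since $\Sk/\norminf{\Sk}=\di/\norminf{\di}$ shares the limit $\ubar{\hat{\di}}$ of the refining direction while $\norminf{\Sk}\to 0$ and $\Xk\to\hat{\ubar{\x}}$, I would pass to the limit along the refining subsequence precisely as in the proof of Lemma~\ref{l:cvrefineddir}, using Lipschitz continuity of $f$ to replace $\Sk/\norminf{\Sk}$ by $\ubar{\hat{\di}}$ and the $\limsup$ definition of the Clarke derivative, to conclude $f^{\circ}(\hat{\ubar{\x}};\ubar{\hat{\di}})\ge 0$.

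To build $\mathscr{E}$ I would intersect the almost-sure events of Theorems~\ref{convSeriesStom} and~\ref{refConvStoM} with the event that $\ef$-accuracy holds infinitely often. For the latter, let $\mathcal{A}_k$ denote the accuracy event; $\beta$-probabilistic $\ef$-accuracy gives $\prob{\mathcal{A}_k\mid\F_{k-1}}\ge\beta>0$, so $\sum_k\prob{\mathcal{A}_k\mid\F_{k-1}}=+\infty$ almost surely, and the conditional (L\'evy) form of the second Borel--Cantelli lemma yields $\prob{\mathcal{A}_k \text{ i.o.}}=1$.

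The main obstacle is the coupling between the random refining subsequence and the random accuracy events: the computation above needs accuracy at infinitely many iterations of the \emph{specific} subsequence $\mathcal{L}$ defining $\ubar{\hat{\di}}$, whereas Borel--Cantelli only delivers accuracy infinitely often over \emph{all} iterations, and one cannot naively apply it to a subsequence that itself depends on the whole trajectory. I would circumvent this by not fixing $\ubar{\hat{\di}}$ first: restrict attention to the set of unsuccessful iterations at which the estimates are accurate, argue that this set is infinite on $\mathscr{E}$, and, by the compactness assumption of Theorem~\ref{refConvStoM}, extract from it a refining subsequence. The asymptotic density of the poll directions produced by Algorithm~\ref{alg:polldir} (Theorem~\ref{theo:madsdensedir}) then makes the associated refining directions dense in $\snOne$, so the core estimate gives $f^{\circ}(\hat{\ubar{\x}};\cdot)\ge 0$ on a dense set of directions; Lipschitz continuity of $\di\mapsto f^{\circ}(\hat{\ubar{\x}};\di)$ extends the inequality to every direction, hence to every refining direction. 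Making the ``infinite accurate unsuccessful subsequence'' claim rigorous, and ensuring density survives the restriction to accurate iterations, is the delicate measure-theoretic point; note that $\beta>1/2$ is not needed here but is used upstream, in Theorem~\ref{convSeriesStom}, to force $\Dm\to 0$.
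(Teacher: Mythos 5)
Your core pathwise estimate is correct: on an unsuccessful iteration at which the estimates happen to be $\ef$-accurate, combining the failed sufficient-decrease test on the estimates with the two accuracy inequalities gives $f(\Xk+\Sk)-f(\Xk)\ge -\cO\bigl((\dpl)^2\bigr)$, and dividing by $\norminf{\Sk}=\cO(\dpl)$ yields an increment ratio bounded below by $-\cO(\dpl)\to 0$, after which the limit passage is exactly that of Lemma~\ref{l:cvrefineddir}. The problem is the probabilistic glue, and you have in fact diagnosed it yourself: L\'evy's extension of Borel--Cantelli gives accuracy infinitely often over $\N$, but the refining subsequence $\mathcal{L}$ attached to a given refining direction is a random set depending on the entire trajectory (membership of $k$ in $\mathcal{L}$ is not $\F_{k-1}$-measurable), so you cannot conclude that accuracy holds infinitely often \emph{along} $\mathcal{L}$. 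Your proposed repair --- first restricting to accurate unsuccessful iterations and then extracting a refining subsequence --- does not prove the stated theorem: the claim is that the inequality holds for \emph{every} refined point and \emph{every} refining direction of the algorithm, not for one refined point manufactured from the accurate iterations; moreover, density of $\{\vi_k\}$ over all $k$ (Theorem~\ref{theo:madsdensedir}) does not pass to an arbitrary sparse random subsequence, so the density step of your repair is also unjustified.

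The argument used in the StoMADS analysis (which the survey states without reproducing) avoids this coupling entirely by exploiting the \emph{second-moment} half of Assumption~\ref{StochAssumptStom} rather than the $\beta$-probabilistic accuracy. Since the Lyapunov argument behind Theorem~\ref{convSeriesStom} yields $\sum_k \E{(\Dpl)^2}<\infty$, one gets
\[
\Egg{\sum_k \frac{\abs{\Fsk-f(\XkSk)}^2}{(\Dpl)^2}}\le \kappa_f^2\sum_k\E{(\Dpl)^2}<\infty ,
\]
hence $\abs{\Fsk-f(\XkSk)}/\Dpl\to 0$ and $\abs{\Fok-f(\Xk)}/\Dpl\to 0$ on a single almost-sure event $\mathscr{E}$. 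Conditioned on $\mathscr{E}$ one writes, for any unsuccessful $k$ and any $\di_k\in\Dkp$,
\[
\frac{f(\XkSk)-f(\Xk)}{\dm\norme{\di_k}}
=\frac{f(\XkSk)-\Fsk}{\dm\norme{\di_k}}
+\frac{\Fsk-\Fok}{\dm\norme{\di_k}}
+\frac{\Fok-f(\Xk)}{\dm\norme{\di_k}},
\]
and all three terms are bounded below by quantities tending to zero (using $\dm\norme{\di_k}\ge c\,\dpl$ from Algorithm~\ref{alg:polldir} and the failed decrease test for the middle term) \emph{at every iteration}, not merely at infinitely many; the deterministic limit argument then applies simultaneously to every refining subsequence and every refining direction. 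The missing idea in your proposal is therefore the use of the $L^2$ accuracy bound to control the normalized estimation error asymptotically everywhere, which is precisely why that bound appears in Assumption~\ref{StochAssumptStom} alongside the probabilistic accuracy.
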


\rev{The argument and analysis of StoMADS was adapted to directional 
direct-search methods by Dzahini~\cite{dzahini2020expected}, giving rise to the 
Stochastic Directional \revn{Direct-Search} (SDDS) algorithm, which can be viewed as the 
extension of Algorithm~\ref{alg:basicDS} to handle noisy function values. The SDDS 
framework which, unlike StoMADS, does not require that trial points belong to a 
mesh, relies on a careful update parameter for the stepsize $\alpha_k$, as well 
as sufficient decrease~\cite[Section~2.1]{dzahini2020expected} based on a forcing function of the form $\revnn{ \rho(\alpha)=c\,\alpha^q,\ \alpha>0}$. 
Using martingale-based analysis~\cite{blanchet2016convergence} and assuming 
$\beta$-probabilistically $\ef$-accurate estimates, one can show that the 
expected number of iterations required by SDDS algorithms to drive 
$\normii{\nabla f(\Xk)}$ below a given threshold $\epsilon\in (0,1)$ is of order 
$\cO\left(\epsilon^{-\frac{q}{\min(q-1, 1)}}/(2\beta - 1) \right)$
~\cite[Theorem~4]{dzahini2020expected}.} 
More recently,  Rinaldi et al.~\cite{rinaldi2023stochastic} proved convergence 
of a stochastic \rev{directional} direct-search scheme for nonsmooth objectives. 
\rev{The convergence analysis uses a weak tail bound condition on the noise, a 
weaker assumption than prior results, that still leads to almost-sure convergence.}

\paragraph{\rev{Sample complexity}} 
\rev{Sample complexity refers to bounding the number of evaluations of the noisy 
function estimates (also called samples) required to achieve a desired convergence 
level. To the best of our knowledge, there are no bounds available on the number 
of samples required to achieve a given accuracy for direct-search methods with 
adaptive sampling, despite recent developments in the general stochastic 
optimization setting~\cite{jin2023sample}. On the other hand, bounds on the 
number of samples per iteration have been obtained in the direct-search 
literature. A first bound of that nature was given by Audet et 
al.~\cite{audet2019stomads}, where estimates~\eqref{eq:estimreplications} required 
$\cO({\revn{\underline{\alpha}_k}}^{-4})$ samples in order to obtain convergence 
guarantees. A similar bound in $\cO(\underline{\alpha}_k^{-4})$ can be showed 
for SDDS~\cite{dzahini2020expected}. An improvement to $\cO(\underline{\alpha}_k^{-2q})$ 
samples per iteration was proved by Rinaldi et al.~\cite{rinaldi2023stochastic} 
based on using a forcing function of the form $\rho(\alpha)=c\,\alpha^q$ with 
$c>0$. Assuming a common random number generator can be used to sample the 
function estimates, and that $f(\cdot,\revn{\ubar{\zeta})}$ is Lipschitz continuous 
with a Lipschitz constant that is uniform across all \revn{$\ubar{\zeta}$}, the 
sampling cost improves by a factor of $\alpha_k^2$ at every 
iteration~\cite{rinaldi2023stochastic}.} Another notable result is the 
$\cO((\log T)^{\frac{2}{3}} T^{\frac{2}{3}})$ regret bound with respect to a 
sample budget~$T$ for stochastic direct-search applied to smooth strongly convex 
objectives proved by Achddou et al.~\cite{achddou2022regret}. The same authors 
introduce a novel technique called \emph{sequential sampling}, that allows for 
fewer samples per iteration in cases where the sufficient decrease condition is 
satisfied or violated with a certain margin. 

\section{Handling constraints}
\label{sec:cons}

In this section, we consider direct-search methods designed for \emph{constrained} 
optimization problems of the form
\begin{equation}
\label{eq:constrained}
	\min_{\xb \in \R^n} f(\xb) 
	\qquad \mbox{s.t.} \quad 
	\xb \in \Omega, 
\end{equation}
where \rev{the feasible set $\Omega$ is a subset of $\R^n$ that encodes constraints 
on the problem variables}. \revn{As in the rest of the paper, information about the objective function is only available through function evaluations. As for the constraint set, access to derivative information is common for simple constraints, but is not necessary to design a 
direct-search scheme. In fact, constraints in derivative-free information are subject to a careful 
classification}, according to the information they 
provide regarding feasibility \rev{or lack thereof}~\cite{le2023taxonomy}. In the 
context of this survey, we \revn{classify a constraint} as \emph{relaxable} or 
\emph{unrelaxable} depending on whether \revn{the objective function and all other constraints can be queried at points that do not satisfy the first constraint.} We \revn{also} \rev{distinguish 
\emph{quantifiable} and \emph{non-quantifiable} constraints: the former 
corresponds to constraints that produce a numerical value, while the latter} 
can be viewed as a binary certificate of constraint satisfaction.

\rev{The first part of this section is concerned with feasible direct-search 
methods, that aim at preserving feasibility of the iterates throughout the 
optimization process. Section~\ref{ssec:feasDS} surveys theoretical guarantees 
that have been obtained in the presence of linear constraints, manifold 
constraints or non-relaxable constraints. We then move to infeasible 
direct-search methods, for which feasibility can only be guaranteed 
asymptotically, in Section~\ref{ssec:infeasDS}.}

\subsection{Feasible methods}
\label{ssec:feasDS}

In this section, we consider feasible direct-search algorithms, that are 
particularly suited for unrelaxable constraints. 
\rev{Algorithm~\ref{alg:feasibleDS} describes a basic variation on 
Algorithm~\ref{alg:basicDS} that ensures the objective to only be evaluated at
feasible points.}

\rev{In Section~\ref{sssec:boundslinds}, we discuss the case of linear constraints, 
that has been tackled by the main classes of direct-search algorithms.}
Sections~\ref{sssec:riemann} and~\ref{sssec:extremebarrier} survey other feasible 
iterate strategies, based on Riemannian manifolds and extreme barrier, 
respectively.

\begin{algorithm}[h]
	\caption{Feasible direct-search scheme}
	\label{alg:feasibleDS}
	\begin{algorithmic}[1]
        \par\vspace*{0.1cm}
		\STATE \textbf{Inputs:} Feasible point $\x_0\in\Omega$, 
		initial stepsize $\ssize_0> 0$, 
        stepsize update parameters $0 < \theta < 1 \le \gamma$, 
        forcing function $\rho: \Rplus \rightarrow \Rplus$.
		\FOR{$k=0,1,2,\ldots$}
			\STATE Select a set $\bbD_k$ of poll directions. 
		      \IF{\rev{$\xk+\bm{s} \in \Omega$ and 
		      $f(\xk+\bm{s}) < f(\xk) - \rev{\rho(\ssize_k)}$ hold for some 
		      $\bm{s} \in \accolade{\ssize_k\db:\bm{d} \in \bbD_k}$}}
                \STATE Declare the iteration as successful,
                set $\xkun=\xk+\bm{s}$ and $\ssize_{k+1}= \gamma \ssize_k$.
            \ELSE
			    \STATE Declare the iteration as unsuccessful, set $\xkun=\xk$ and 
                $\ssize_{k+1}=\theta \ssize_k$.
            \ENDIF
		\ENDFOR
	\end{algorithmic}
\end{algorithm}

\subsubsection{Linear constraints}
\label{sssec:boundslinds}

\rev{Linear constraints, and in particular bound constraints, are the simplest kind 
of constraints in optimization. In \revn{direct-search techniques}, linear 
constraints can be easily handled by a careful choice of polling 
directions~\cite{KoLeTo03a}. We present below the main tools and results for 
this purpose in the context of directional \revn{direct-search schemes}, then 
discuss related results for mesh-based and line-search techniques.}

\rev{Suppose that the feasible set $\Omega$ of \revn{Problem}~\eqref{eq:constrained} is 
polyhedral and described by linear inequalities, i.e.{\bll,}}
\begin{equation} \label{eq:linearly_constrained}
	\Omega = \{\xb \in \mathbb{R}^n \ | \ \bm{A}\xb \leq \bm{b}\}
\end{equation}
for $\bm{A} \in \mathbb{R}^{l \times n}$ and $\bm{b} \in \mathbb{R}^l$. 
Without loss of generality, we assume \rev{that the rows of $\bm{A}$ are 
unit vectors. Convergence results for Algorithm~\ref{alg:feasibleDS} in that 
setting aim at proving that the quantity $\chi(\xb_k)$ goes to zero, where 
\begin{equation}
\label{eq:chi}
	\forall \xb \in \Omega,
	\chi(\xb)
	=
	\max_{\substack{\xb+\db \in \Omega \\ \|\db\| \le 1}} 
	\scal{\db}{[-\nabla f(\xb)]}.
\end{equation}
The measure $\chi(\xb)$ is always nonnegative, and equals zero if and only if 
$\xb$ is a first-order KKT stationary point of \revn{Problem}~\cite{KoLeTo03a}.}

The analysis of \revn{direct-search methods} in that setting relies heavily 
on the notion of approximate tangent and normal cones\revn{,that involve 
both a feasible point and a tentative stepsize}. 
\revn{
\begin{definition}\label{DefNormTan}
Given $\xb \in \Omega$ and $\alpha>0$, the \emph{approximate normal cone} $N_{\Omega}(\xb,\alpha)$ is defined by
\begin{equation*}
\label{eq:approxnormal}
	N_{\Omega}(\xb,\alpha) 
	= 
	\mbox{pspan}\left(\left\{\ab_i\ |\ b_i-\scal{\ab_i}{\xb} \le \alpha\right\}\right),
\end{equation*}
where $\mbox{pspan}(\mathcal{A})$ denotes the set of nonnegative 
linear combinations of vectors in $\mathcal{A}$.
The \emph{approximate normal cone} is then defined as 
$T_{\Omega}(\xb,\alpha)=N_{\Omega}(\xb,\alpha)^{\circ}$.
\end{definition}
}
It follows from \rev{Definition~\ref{DefNormTan} that a displacement of length $\alpha$ along any direction in the 
approximate tangent cone preserves feasibility, while the normal cone is generated by normals corresponding to constraints that can be violated with such a displacement.} A natural strategy for defining 
polling directions in Algorithm~\ref{alg:feasibleDS} then consists {\rdj of} using 
generators of the approximate tangent cone at the current 
iterate~\cite[Proposition 2.1]{KoLeTo2006}.

\rev{Indeed, the cosine measure defined in Section~\ref{sec:algosdef} can be 
generalized to the linearly constrained setting~\cite{KoLeTo03a,GrRoViZh2019} \revn{as follows}. 
\revn{
\begin{definition}
Given $\xb \in \Omega$ and $\alpha>0$, let 
$\pi_{T_{\Omega}(\xb,\alpha)}[\cdot]$ denote the orthogonal projection onto 
the approximate tangent cone $T_{\Omega}(\xb,\alpha)$. Given $\xb \in \Omega$, $\alpha>0$, and a direction set $\bbD$, the cosine measure generalized to linear constraints is defined by
\begin{equation}
\label{eq:cmTxalpha}
	\textnormal{cm}_{T_{\Omega}(\xb,\alpha)}(\mathbb{D}) 
	=
	\inf_{\substack{\vi \in \R^n\\ \|\pi_{T_{\Omega}(\xb,\alpha)}[\vi]\| \neq 0}} 
	\max_{\db \in \bbD} \frac{\scal{\db}{\vi}}{\|\db\|\|\vi\|}.
\end{equation}
\end{definition}
}
When $\bbD$ contains generators of 
$T(\xb,\alpha)$, it can be shown that this quantity is 
positive~\cite{KoLeTo2006}, and thus such directions 
guarantee feasible descent, i.e. preserve feasibility and yield descent from 
$\xb$ with a stepsize $\alpha$ as long as 
$\pi_{T_{\Omega}(\xb,\alpha)}\left[-\nabla f(\xb)\right]>0$. Although the latter 
quantity is not a good measure of stationarity, it bears a natural connection 
with the measure $\chi$, described below.}

\begin{lemma}{(\cite[Proposition B.2]{KoLeTo2006})}
\label{l:chiconnect}
	\rev{
	Suppose that $\Omega$ is defined via the polyhedral description~\eqref{eq:linearly_constrained}, and that there exists $B_g>0$ 
	such that $\|\nabla f(\xb)\| \le B_g$ for any $\xb \in \Omega$. Then, for 
	any $\xb \in \Omega$ and any $\alpha>0$, there exists $\eta>0$ such that
	\begin{equation} \label{eq:chibound}
		\chi(\xb) \leq \pi_{T_{\Omega}(\xb,\alpha)}\left[-\nabla f(\xb)\right] 
		+ \frac{\alpha}{\eta}B_g.
	\end{equation} 
	}
\end{lemma}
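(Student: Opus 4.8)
The plan is to fix $\xb \in \Omega$ and $\alpha>0$, set $\vb := -\nabla f(\xb)$, and split $\vb$ according to the approximate tangent cone $T := T_{\Omega}(\xb,\alpha)$ and its polar $N := N_{\Omega}(\xb,\alpha)$. Since $T = N^{\circ}$ and $N$ is a finitely generated (hence closed convex) cone, Moreau's decomposition theorem applies and yields $\vb = \vb_T + \vb_N$ with $\vb_T := \pi_{T}[\vb] \in T$, $\vb_N := \pi_{N}[\vb] \in N$, and $\scal{\vb_T}{\vb_N}=0$. The orthogonality immediately gives $\n{\vb_N}^2 \le \n{\vb_T}^2 + \n{\vb_N}^2 = \n{\vb}^2$, so $\n{\vb_N} \le \n{\nabla f(\xb)} \le B_g$, which is where the bound $B_g$ will enter.

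Next I would estimate $\scal{\db}{\vb}$ for an arbitrary $\db$ feasible in the sense of the maximization defining $\chi(\xb)$, i.e. $\n{\db}\le 1$ and $\xb+\db \in \Omega$. Writing $\scal{\db}{\vb} = \scal{\db}{\vb_T} + \scal{\db}{\vb_N}$, the tangential part is handled by Cauchy--Schwarz, $\scal{\db}{\vb_T} \le \n{\db}\,\n{\vb_T} \le \n{\pi_{T}[\vb]}$. For the normal part I would expand $\vb_N = \sum_{i \in \mathcal{I}_\alpha} \lambda_i \ab_i$ with $\lambda_i \ge 0$, where $\mathcal{I}_\alpha = \{i : b_i - \scal{\ab_i}{\xb} \le \alpha\}$ is exactly the index set generating $N_{\Omega}(\xb,\alpha)$. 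Feasibility of $\xb+\db$ gives $\scal{\ab_i}{\db} \le b_i - \scal{\ab_i}{\xb} \le \alpha$ for every $i \in \mathcal{I}_\alpha$, so that $\scal{\db}{\vb_N} = \sum_i \lambda_i \scal{\ab_i}{\db} \le \alpha \sum_i \lambda_i$.

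The crux, and the step I expect to be the main obstacle, is to bound $\sum_i \lambda_i$ by $\n{\vb_N}/\eta$ for a constant $\eta>0$ depending only on $\bm{A}$. By Carathéodory's theorem for cones, $\vb_N$ can be represented through a linearly independent subset $\{\ab_i : i \in S\}$ of the generators; letting $\bm{A}_S$ denote the resulting full-column-rank matrix and $\bm{\lambda}_S$ its nonnegative coefficient vector, one has $\bm{\lambda}_S = (\bm{A}_S^\top \bm{A}_S)^{-1}\bm{A}_S^\top \vb_N$, whence $\sum_i \lambda_i \le \sqrt{n}\,\n{\bm{\lambda}_S} \le \tfrac{\sqrt{n}}{\sigma_{\min}(\bm{A}_S)}\n{\vb_N}$. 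Because there are only finitely many index subsets $S$ and each linearly independent $\bm{A}_S$ satisfies $\sigma_{\min}(\bm{A}_S)>0$, the choice $\eta := \min_S \sigma_{\min}(\bm{A}_S)/\sqrt{n}$ is a strictly positive constant extracted from the finite combinatorial structure of $\bm{A}$ alone. The delicate point is precisely this uniformity in $\xb$ and $\alpha$: the estimate must not degrade as $\xb$ approaches a vertex or as the $\alpha$-active set changes, and this is ensured exactly because $\eta$ never refers to the particular point or stepsize.

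Finally I would reassemble the estimates: every admissible $\db$ satisfies $\scal{\db}{\vb} \le \n{\pi_{T}[\vb]} + \tfrac{\alpha}{\eta}\n{\vb_N} \le \n{\pi_{T_{\Omega}(\xb,\alpha)}[-\nabla f(\xb)]} + \tfrac{\alpha}{\eta}B_g$, and taking the maximum over all such $\db$ yields exactly~\eqref{eq:chibound}, with the understanding that the projection term in the statement denotes the norm $\n{\pi_{T_{\Omega}(\xb,\alpha)}[-\nabla f(\xb)]}$. I expect the Moreau decomposition and the two inner-product estimates to be routine, so that essentially all the real content lies in justifying the geometry-dependent constant $\eta$.
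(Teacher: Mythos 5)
The paper does not prove this lemma itself but defers to \cite[Proposition~B.2]{KoLeTo2006}, and your argument is correct and follows essentially the same route as that reference: Moreau decomposition of $-\nabla f(\xb)$ along the polar pair $N_{\Omega}(\xb,\alpha)$, $T_{\Omega}(\xb,\alpha)$, the estimate $\scal{\ab_i}{\db}\le\alpha$ obtained from feasibility of $\xb+\db$ on the $\alpha$-active constraints, and a uniform constant $\eta$ extracted (here via conic Carath\'eodory and the pseudoinverse) from the finitely many linearly independent subsets of rows of $\bm{A}$. You also correctly flag the two points that matter: the projection term in~\eqref{eq:chibound} must be read as its norm, and the entire content of the statement is the uniformity of $\eta$ in $\xb$ and $\alpha$ --- as literally quantified, with $\eta$ allowed to depend on the point and the stepsize, the inequality would be vacuous since $\alpha B_g/\eta\to\infty$ as $\eta\to 0^+$.
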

\rev{
Note that $\eta$ can be computed a priori by considering all possible 
normal cones at $\xb$~\cite{GrRoViZh2019}.}

\rev{Lemma~\ref{l:chiconnect} is instrumental to deriving global convergence 
guarantees for Algorithm~\ref{alg:feasibleDS}. Assuming that generators 
for the approximate tangent cone are used, Kolda et al.~\cite{KoLeTo03a} 
established a global convergence result of the form 
$\liminf_{k \rightarrow \infty} \chi(\xb_k)=0$. A complexity bound was 
later established by Gratton et al.~\cite{GrRoViZh2019}, and is summarized 
below.}

\begin{theorem}{\rev{(\cite[Theorem 4.1]{GrRoViZh2019})}}
\label{th:classic_conv}
	\rev{
	Suppose that $\Omega$ is defined via the polyhedral 
	description~\eqref{eq:linearly_constrained}, and that there exists $B_g>0$ 
	such that $\|\nabla f(\xb)\| \le B_g$ for any $\xb \in \Omega$. Suppose 
	further that $f$ satisfies Assumptions~\ref{ass:L0} and~\ref{ass:fbound}. 
	Finally, suppose that Algorithm~\ref{alg:feasibleDS} is applied using 
	$\rho(\alpha) = c\,\alpha^2$ with $c>0$ and choosing $\bbD_k$ as a set 
	of unit norm generators for $T(\xb_k, \alpha_k)$ at every iteration 
	with $|\bbD_k| \le m$ and 
	$\textnormal{cm}_{T_{\Omega}(\xb_k,\alpha_k)}(\bbD_k) \ge \tau>0$. Then, for 
	any $\epsilon>0$, the method reaches an iterate such that 
	$\chi(\xb_k) \le \epsilon$ in at most
	\begin{equation}
	\label{eq:classic_conv}
		\cO(\tau^{-2}\epsilon^{-2}) \ \mbox{iterations} 
		\quad \mbox{and} \quad
		\cO(m\tau^{-2}\epsilon^{-2}) \ \mbox{function evaluations}.
	\end{equation}
	}
\end{theorem}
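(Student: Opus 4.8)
The plan is to follow the two-part blueprint underlying all sufficient-decrease complexity results (as in Theorems~\ref{th:lipgglobal}--\ref{t:n2complexity}), adapted to the linearly constrained geometry through Lemma~\ref{l:chiconnect}. The two ingredients are: (i) an \emph{unsuccessful-iteration estimate} showing that $\chi(\xb_k)=\cO(\alpha_k)$ whenever iteration $k$ is unsuccessful, and (ii) a \emph{counting argument} that uses the sufficient decrease $\rho(\alpha_k)=c\,\alpha_k^2$ together with Assumption~\ref{ass:fbound} to bound the numbers of successful and unsuccessful iterations. Throughout I assume, as is standard for these complexity statements and as required by the Taylor estimate of Lemma~\ref{l:re}, that $f$ is $L$-smooth on a neighborhood of $L_0$.

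First I would establish the unsuccessful-iteration estimate, the constrained analogue of Proposition~\ref{prop:unsuccPSSsmooth}. Since $\bbD_k$ consists of unit-norm generators of the approximate tangent cone $T_{\Omega}(\xb_k,\alpha_k)$, each displacement $\alpha_k\db$ has length $\alpha_k$ and therefore preserves feasibility, so $\xb_k+\alpha_k\db\in\Omega$ for every $\db\in\bbD_k$. Consequently an unsuccessful iteration forces the sufficient-decrease test to fail at \emph{every} feasible trial point, i.e. $f(\xb_k+\alpha_k\db)\ge f(\xb_k)-c\,\alpha_k^2$ for all $\db\in\bbD_k$. Combining this with the $L$-smooth Taylor bound (exactly as in Lemma~\ref{l:re}, using $\n{\db}=1$) yields $\scal{-\nabla f(\xb_k)}{\db}\le(\tfrac{L}{2}+c)\alpha_k$ for all $\db\in\bbD_k$. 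To convert this into a bound on the projected gradient, I set $\vi=\pi_{T_{\Omega}(\xb_k,\alpha_k)}[-\nabla f(\xb_k)]$ and use the projection identity $\n{\vi}^2=\scal{\vi}{-\nabla f(\xb_k)}$ together with a nonnegative representation $\vi=\sum_i\lambda_i\db_i$ over generators, which exists with $\sum_i\lambda_i\le\n{\vi}/\tau$ by the generalized cosine-measure property~\eqref{eq:cmTxalpha} (cf.~\cite{KoLeTo2006}). This gives $\n{\vi}^2\le(\tfrac{L}{2}+c)\alpha_k\sum_i\lambda_i\le\tfrac{1}{\tau}(\tfrac{L}{2}+c)\alpha_k\n{\vi}$, hence $\n{\vi}\le\tfrac{1}{\tau}(\tfrac{L}{2}+c)\alpha_k$. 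Feeding this bound into~\eqref{eq:chibound} of Lemma~\ref{l:chiconnect}, and invoking Assumption~\ref{ass:L0} so that the constant $\eta$ can be taken uniform over the bounded set $L_0$, I obtain $\chi(\xb_k)\le C\alpha_k$ at every unsuccessful iteration, with $C=\tfrac{1}{\tau}(\tfrac{L}{2}+c)+\tfrac{B_g}{\eta}=\cO(\tau^{-1})$.

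Next I would run the counting argument. Fix $\epsilon>0$ and suppose, toward bounding the hitting time, that $\chi(\xb_k)>\epsilon$ for all iterations considered. The estimate above shows that any \emph{unsuccessful} such iteration satisfies $\alpha_k>\alpha_\epsilon:=\epsilon/C=\cO(\tau\epsilon)$; since the stepsize only shrinks (by the factor $\theta$) at unsuccessful iterations, an easy induction gives $\alpha_k\ge\bar\alpha:=\min\{\alpha_0,\theta\alpha_\epsilon\}=\cO(\tau\epsilon)$ for all such $k$. Denoting by $S$ and $U$ the sets of successful and unsuccessful iterations before the target accuracy is met, the sufficient decrease gives, via Assumption~\ref{ass:fbound}, $f(\xb_0)-f^*\ge\sum_{k\in S}c\,\alpha_k^2\ge|S|\,c\,\bar\alpha^2$, so $|S|\le\tfrac{f(\xb_0)-f^*}{c\,\bar\alpha^2}=\cO(\tau^{-2}\epsilon^{-2})$. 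For the unsuccessful count, I telescope the stepsize recursion $\alpha_K=\alpha_0\,\gamma^{|S|}\theta^{|U|}$; the lower bound $\alpha_K\ge\bar\alpha$ and $\log(1/\theta)>0$ give $|U|\le\tfrac{1}{\log(1/\theta)}\big(\log\tfrac{\alpha_0}{\bar\alpha}+|S|\log\gamma\big)=\cO(|S|)+\cO(\log\epsilon^{-1})=\cO(\tau^{-2}\epsilon^{-2})$. Hence the number of iterations until $\chi(\xb_k)\le\epsilon$ is $|S|+|U|=\cO(\tau^{-2}\epsilon^{-2})$, and since each iteration evaluates $f$ at most $|\bbD_k|\le m$ times, the evaluation complexity is $\cO(m\tau^{-2}\epsilon^{-2})$, as claimed.

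I expect the main obstacle to be the unsuccessful-iteration estimate of the second paragraph rather than the counting of the third. Two points need care there: justifying that polling along unit generators of $T_{\Omega}(\xb_k,\alpha_k)$ keeps every trial point feasible (so that an unsuccessful iteration really does certify failure of decrease in all generating directions), and controlling the normal component of $-\nabla f(\xb_k)$ so that the projected-gradient bound emerges with the correct $1/\tau$ factor---this is precisely where the generalized cosine measure~\eqref{eq:cmTxalpha} and the representation bound $\sum_i\lambda_i\le\n{\vi}/\tau$ enter. One must also ensure, using Assumption~\ref{ass:L0}, that $\eta$ in Lemma~\ref{l:chiconnect} is uniform over the iterates; since the iterates may approach different faces of $\Omega$, this uniformity is what makes the constant $C$ independent of $k$. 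Once $\chi(\xb_k)\le C\alpha_k$ is in hand, the remainder is the by-now-standard bookkeeping of~\cite{Vicente2013W,GrRoViZh2019}.
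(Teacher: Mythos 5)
The survey does not reproduce a proof of this theorem (it is stated with a citation to \cite[Theorem~4.1]{GrRoViZh2019}), so your proposal must be judged as a reconstruction of the cited argument. Its overall architecture is the right one and matches the source: feasibility of all trial points $\xb_k+\alpha_k\db$ for unit-norm $\db\in T_{\Omega}(\xb_k,\alpha_k)$, a Taylor/sufficient-decrease bound at unsuccessful iterations, conversion to a bound on $\|\pi_{T_{\Omega}(\xb_k,\alpha_k)}[-\nabla f(\xb_k)]\|$ via the generalized cosine measure, Lemma~\ref{l:chiconnect} with a uniform $\eta$ (which is indeed uniform a priori, since only finitely many index sets of near-active constraints can occur), and the standard Vicente-style counting of successful and unsuccessful iterations. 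You are also right to add $L$-smoothness explicitly; it is needed for the Taylor estimate and is tacitly assumed in the survey's statement.

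The one step that is not adequately justified is the representation bound $\sum_i\lambda_i\le\|\vi\|/\tau$ for a conic decomposition $\vi=\sum_i\lambda_i\db_i$ of the projected gradient. This is \emph{not} a direct restatement of the cosine-measure property~\eqref{eq:cmTxalpha}: the elementary manipulation $\|\vi\|^2=\sum_i\lambda_i\scal{\db_i}{\vi}\le\bigl(\max_i\scal{\db_i}{\vi}\bigr)\sum_i\lambda_i$ only yields a \emph{lower} bound on $\sum_i\lambda_i$. The upper bound you need is true, but proving it requires LP duality (the minimal $\sum_i\lambda_i$ equals $\max\{\scal{\yb}{\vi}:\scal{\yb}{\db_i}\le 1\ \forall i\}$, and dual feasibility plus the cosine-measure lower bound forces $\|\pi_{T}[\yb]\|\le 1/\tau$, whence $\scal{\yb}{\vi}\le\|\vi\|/\tau$ by the Moreau decomposition for $\vi\in T$). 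The standard route of \cite{KoLeTo2006,GrRoViZh2019} avoids this entirely: apply the cosine-measure inequality directly to $\vi=-\nabla f(\xb_k)$ to obtain a single $\db\in\bbD_k$ with $\scal{\db}{-\nabla f(\xb_k)}\ge\tau\,\|\pi_{T_{\Omega}(\xb_k,\alpha_k)}[-\nabla f(\xb_k)]\|$, and combine with the unsuccessful-iteration bound $\scal{\db}{-\nabla f(\xb_k)}\le(\tfrac{L}{2}+c)\alpha_k$. (Note in passing that the denominator in~\eqref{eq:cmTxalpha} should read $\|\db\|\,\|\pi_{T_{\Omega}(\xb,\alpha)}[\vi]\|$ for this to make sense for proper cones; your argument implicitly uses that corrected form.) With that one step repaired or replaced, the remainder of your proof --- the hitting-time argument, the lower bound on the stepsizes, and the bounds on $|S|$ and $|U|$ --- is correct and delivers the stated $\cO(\tau^{-2}\epsilon^{-2})$ iteration and $\cO(m\tau^{-2}\epsilon^{-2})$ evaluation complexities.
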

\rev{Although the bounds can be shown to hold for finite values of $m$ 
and positive values of $\tau$, precise values for these quantities are 
only available in the special case of bound constraints and linear 
equality constraints. \revn{When only bound constraints are enforced, using coordinate vectors and their negatives as poll directions yields $\tau=1/\sqrt{n}$ and $m=2n$. When the constraint set is defined using $1 \le n' \le n$ linearly independent equality constraints, the tangent cone is a linear subspace of $\R^n$. Using orthonormal basis vectors for that subspace and their negatives as poll directions yields $\tau=1/\sqrt{n'}$ and $m=2n'$~\cite{GrRoViZh2019}.} We note that a generalization of 
Theorem \ref{th:classic_conv} through probabilistic analysis was described in 
Gratton et al.~\cite{GrRoViZh2019}, \revn{showing that high-probability complexity results could also be derived akin to those of Theorem~\ref{t:ncomplexity}.}
Although the random polling sets to be used must be restricted to 
preserve feasibility, it is not clear that the complexity improves 
compared to the deterministic setting, except in the context of 
linear equality constraints. Nevertheless, by decomposing all tangent cones as the 
sum of pointed cones and subspaces, random directions can be drawn in the former, 
leading to a possible improvement of the number of directions used at every 
iteration.}

\rev{We note that linear constraints have also been investigated in the context of 
mesh-based algorithms, with more emphasis on handling linear inequality constraints 
and bounds to obtain convergence results~\cite{abramson2008pattern,price2003frames}. 
Although those result apply in both a smooth and a nonsmooth setting, no 
complexity guarantees are available.}
\rev{Furthermore, in a line-search setting, we note that the feasible direction 
framework based on approximate tangent cones was used in the context of bound 
constraints in Lucidi and Sciandrone~\cite{LuSc02b} to establish convergence 
guarantees. The idea was then extended to general algebraic unrelaxable smooth 
constraints through the use of projections, leading again to global convergence 
guarantees~\cite{lucidi2002objective}.}
\revn{Finally, Cristofari and Rinaldi~\cite{cristofari2021derivative} defined 
 feasible direct-search methods for problems with structured feasible set (i.e., 
$\Omega = \textnormal{conv}(A)$ with $A=\{v_1, \dots, v_r\}, \ v_i \in \R^n$)
that use directions pointing towards/away from the vertices of the feasible set, and are somehow related to \emph{zeroth-order} Frank--Wolfe methods ( see, e.g., \cite{bomze2021frank} and references therein for further details on the Frank--Wolfe method).}

\subsubsection{Riemannian \revn{direct-search} schemes}
\label{sssec:riemann}

\rev{In this section, we discuss an emerging trend in feasible methods 
related to Riemannian optimization, a well-established subfield of nonlinear 
optimization
\revn{that is concerned with finding the best value of a function over a smooth manifold}
}~\cite{boumal2023introduction}. \rev{Interestingly, 
the idea of extending direct-search methods to the Riemannian setting started 
early in the development of numerical Riemannian optimization.} The first 
direct-search proposal for Riemannian optimization problems is due to 
Dreisigmeyer~\cite{dreisigmeyer2006equality}, and consists {\rdj of} an extension of 
\rev{mesh-based algorithm} to Riemannian manifolds. 
\rev{The special case of reductive homogeneous spaces, that includes several 
matrix manifolds, is discussed in a follow-up work, where both deterministic 
and probabilistic directional methods are 
proposed~\cite{dreisigmeyer2018direct}. Those references focus on properly 
defining the algorithms rather than on establishing convergence guarantees.}

\rev{More recently, Kungurtsev et al.~\cite{kungurtsev2022retraction} describe 
basic direct-search schemes of directional and line-search types for minimizing 
smooth and nonsmooth objectives on Riemannian manifolds. Global convergence 
properties are established, that match those of the unconstrained setting.} \revn{More precisely, counterparts to 
Theorem~\ref{th:lipgglobal} and~\ref{t:cdir} are established for 
directional variants on smooth and Lipschitz continuous objectives, 
respectively, while counterparts to Theorem~\ref{t:linesearchsmooth} and 
~\ref{t:cdir2} are obtained for variants using line-search extrapolation.
We note, however, that complexity results have yet to be derived 
for Riemannian \revn{direct-search schemes}.}

\subsubsection{Extreme barrier}
\label{sssec:extremebarrier}

The extreme barrier approach is \rev{arguably} the most general way to handle 
constraints in direct-search methods so as to enforce feasibility. It consists {\rdj of} 
setting the objective equal to $+ \infty$ outside the feasible set. The first proof 
of convergence (to Clarke stationary points) of an extreme barrier approach was 
done for mesh adaptive \revn{direct-search algorithms}~\cite{AuDe2006}. \rev{Consequently, most of 
the theory in Section~\ref{ssec:meshbased} extends to the case of a constrained 
set handled through extreme barrier. We present below the main ingredients to 
derive convergence results in the context of Algorithm~\ref{alg:mads} below, but 
note that similar arguments can be applied to any direct-search method with a 
dense set of poll directions.}
\rev{Akin to Section~\ref{sssec:boundslinds}, we start by defining suitable 
cones that characterize stationarity.}
 
\begin{definition}
\label{def:hypertangent}
	\rev{The hypertangent cone $T^H_{\Omega}(\xb)$ to $\Omega$ at 
	$\xb \in \Omega$ is the set of vectors $\db \in \R^n$ such that there 
	exist $\varepsilon > 0$, $\yb \in \Omega \cap B_{\varepsilon}(\xb)$ and 
	$v \in B_{\varepsilon}(\db)$ satisfying $\yb + t \vb \in \Omega$ for 
	any$t \in (0,\varepsilon)$.}
\end{definition}

\begin{definition}
\label{def:clarketgtcone}
	\rev{The \emph{Clarke tangent cone} $T_{\Omega}(\xb)$ to $\Omega$ at 
	$\xb \in \Omega$ is the set of vectors $\db \in \R^n$ such that 
	there exist sequences $\{t_k\}$, $\{\xb_k\}$, $\{\db_k\}$ 
	satisfying $t_k \rightarrow 0^+$, $\xb_k \rightarrow \xb$,
	$\db_k \rightarrow \db$ and $\xb_k + t_k \db_k \in \Omega$ 
	for every $k$.}

    The set $\Omega$ is called \emph{regular} at $\xb$ if 
    $T_{\Omega}(\xb)$ is the closure of $T^H_{\Omega}(\xb)$. 
\end{definition}

\rev{The use of hypertangent cones allows for generalizing 
Lemma~\ref{l:cvrefineddir} to the extreme barrier setting.}

\begin{lemma} 
\label{l:cvrefineddirbarrier}
    Suppose that $f$ is Lipschitz continuous, and consider that 
    Algorithm~\ref{alg:mads} is applied to~\eqref{eq:constrained} using  
    an extreme barrier. Let $\mathcal{K} \subset \N$ be an index subsequence such that 
    $\{\xb_k\}_{k \in \mathcal{K}}$ is a refining subsequence with refined point $\revn{ \hat{\xb}}$. 
    Then, for any refining direction $\db \in T_{\Omega}^H(\revn{ \hat{\xb}})$, 
	\begin{equation}
    \label{eq:cvrefineddirbarrier}
		f^\circ(\revn{ \hat{\xb}}; \db) \geq 0 \, .
	\end{equation}
\end{lemma}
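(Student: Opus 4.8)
The plan is to adapt the proof of Lemma~\ref{l:cvrefineddir} to the constrained setting, the essential new difficulty being that the trial points generated along $\db$ must be shown to remain feasible, so that the extreme barrier does not render the relevant function values infinite. Recall that under an extreme barrier every iterate of Algorithm~\ref{alg:mads} is feasible, so $\xb_\ell \in \Omega$ for all $\ell$ and, $\Omega$ being closed, $\hat{\xb} \in \Omega$. By Definition~\ref{def:refining}, I would first extract an infinite index set $\mathcal{L} \subseteq \mathcal{K}$ and poll directions $\db_\ell \in \bbD_\ell$ such that each iteration $\ell \in \mathcal{L}$ is unsuccessful, $\xb_\ell \to \hat{\xb}$, $\alpha_\ell^m \to 0$, and $\db_\ell/\n{\db_\ell} \to \db/\n{\db}$.

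The crux of the argument is to establish that, for $\ell$ large enough, the trial point $\xb_\ell + \alpha_\ell^m\db_\ell$ is feasible, and this is exactly where membership $\db \in T^H_{\Omega}(\hat{\xb})$ enters. I would rewrite the displacement as $\alpha_\ell^m\db_\ell = s_\ell\vb_\ell$, where $\vb_\ell := \n{\db}\,\db_\ell/\n{\db_\ell} \to \db$ and $s_\ell := \alpha_\ell^m \n{\db_\ell}/\n{\db}$. Since trial points lie on a frame whose extent vanishes along the refining subsequence, the displacement $\alpha_\ell^m\db_\ell$ tends to $\zerob$, so that $s_\ell \to 0^+$. Invoking Definition~\ref{def:hypertangent}, I would pick $\varepsilon>0$ such that $\yb + t\vb \in \Omega$ whenever $\yb \in \Omega \cap B_{\varepsilon}(\hat{\xb})$, $\vb \in B_{\varepsilon}(\db)$ and $t \in (0,\varepsilon)$. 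For $\ell$ large enough one has simultaneously $\xb_\ell \in \Omega \cap B_{\varepsilon}(\hat{\xb})$, $\vb_\ell \in B_{\varepsilon}(\db)$ and $s_\ell \in (0,\varepsilon)$, whence $\xb_\ell + s_\ell\vb_\ell = \xb_\ell + \alpha_\ell^m\db_\ell \in \Omega$. Consequently the barrier is inactive there and the evaluated value coincides with $f(\xb_\ell + \alpha_\ell^m\db_\ell)$.

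With feasibility secured, the failure of the simple-decrease test at the unsuccessful iteration $\ell$ gives $f(\xb_\ell + s_\ell\vb_\ell) \geq f(\xb_\ell)$, i.e. $\tfrac{f(\xb_\ell + s_\ell\vb_\ell) - f(\xb_\ell)}{s_\ell} \geq 0$ (the analogue of~\eqref{eq:unsuccgen} with $\rho \equiv 0$). I would then use the $L$-Lipschitz continuity of $f$ to replace $\vb_\ell$ by $\db$, bounding
\[
	\frac{f(\xb_\ell + s_\ell\db) - f(\xb_\ell)}{s_\ell}
	\geq
	\frac{f(\xb_\ell + s_\ell\vb_\ell) - f(\xb_\ell)}{s_\ell} - L\n{\vb_\ell - \db}
	\geq -L\n{\vb_\ell - \db}.
\]
Passing to the limit superior over $\ell \in \mathcal{L}$, and recalling $\xb_\ell \to \hat{\xb}$, $s_\ell \to 0^+$ and $\n{\vb_\ell - \db} \to 0$, the definition of the Clarke directional derivative (Definition~\ref{def:clarkeder}) yields $f^\circ(\hat{\xb};\db) \geq \limsup_{\ell \in \mathcal{L}} \tfrac{f(\xb_\ell + s_\ell\db) - f(\xb_\ell)}{s_\ell} \geq 0$, which is the claim.

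I expect the feasibility step to be the main obstacle: it relies on the robust, uniform feasibility encoded in the hypertangent cone (feasibility of $\yb + t\vb$ for all nearby $\yb$ and $\vb$), a property strictly stronger than membership of $\db$ in the Clarke tangent cone, and precisely what makes the limsup defining $f^\circ$ — which ranges over $\yb \to \hat{\xb}$ rather than $\yb = \hat{\xb}$ — computable along feasible perturbations. The bookkeeping relating the unnormalized poll direction $\db_\ell$, its normalization, and the target direction $\db$ is the only other point requiring care, and is handled by the rescaling into $s_\ell\vb_\ell$ together with the Lipschitz estimate.
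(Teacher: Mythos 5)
Your proof is correct and follows exactly the route the paper intends: the paper states this lemma without a printed proof, describing it only as the generalization of Lemma~\ref{l:cvrefineddir} via hypertangent cones, and your argument is precisely that classical Audet--Dennis construction (feasibility of the trial points for large $\ell$ from the uniform feasibility in Definition~\ref{def:hypertangent}, then the unconstrained difference-quotient argument with a Lipschitz correction to pass from $\vb_\ell$ to $\db$). You also correctly read the hypertangent cone condition as universal over nearby $\yb$, $\vb$ and $t$, which is the standard definition and the one needed for the feasibility step.
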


\rev{Similarly, using the Clarke tangent cone together with the result 
of Lemma~\ref{l:cvrefineddirbarrier} leads to the following generalization 
of Theorem~\ref{t:cdir}.}

\begin{theorem}
\label{th:constrained}
	\rev{Under the assumptions of Lemma~\ref{l:cvrefineddirbarrier}, let 
	$\mathcal{K}$ be an index set corresponding to unsuccessful iterations for 
	Algorithm~\ref{alg:mads} applied with extreme barrier. Suppose further that 
	$\lim_{k \in \mathcal{K}} \alpha_k = 0$, $\xb_k \rightarrow \revn{\hat{\xb}}$, and that 
	for every direction $\db \in T^H_{\Omega}(\revn{ \hat{\xb}})$ there exists 
	$\mathcal{L} \subset \mathcal{K}$ such that 
	\revn{$\lim_{k \in \mathcal{L}} \frac{\db_{k}}{\n{\db_{k}}} 
	= \frac{\db}{\n{\db}}$ with 
	$\db_{k} \in \bbD_{k}$}. Finally, suppose that the set $\Omega$ is 
	regular at $\revn{\hat{\xb}}$. Then the point $\revn{\hat{\xb}}$ is Clarke
	stationary for \revn{Problem}~\eqref{eq:constrained}.}
\end{theorem}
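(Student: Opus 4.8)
The plan is to bootstrap from the single-direction result of Lemma~\ref{l:cvrefineddirbarrier} to a statement valid on the entire hypertangent cone, and then to enlarge that cone to the Clarke tangent cone using the regularity hypothesis together with the continuity of the Clarke directional derivative.

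First I would record that the hypotheses $\lim_{k \in \mathcal{K}} \alpha_k = 0$ and $\xb_k \rightarrow \hat{\xb}$, combined with the fact that $\mathcal{K}$ indexes unsuccessful iterations, make $\{\xb_k\}_{k \in \mathcal{K}}$ a refining subsequence with refined point $\hat{\xb}$ in the sense of Definition~\ref{def:refining}. The additional assumption states precisely that for every $\db \in T^H_{\Omega}(\hat{\xb})$ there exist $\mathcal{L} \subset \mathcal{K}$ and poll directions $\db_k \in \bbD_k$ with $\db_k/\n{\db_k} \rightarrow \db/\n{\db}$; by the second bullet of Definition~\ref{def:refining}, this means that every such $\db$ is itself a refining direction. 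Applying Lemma~\ref{l:cvrefineddirbarrier} to each of these directions (which lie in $T^H_{\Omega}(\hat{\xb})$ by construction) then yields $f^\circ(\hat{\xb}; \db) \geq 0$ for every $\db \in T^H_{\Omega}(\hat{\xb})$.

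Next I would pass to the closure. Since $f$ is Lipschitz continuous near $\hat{\xb}$, the map $\db \mapsto f^\circ(\hat{\xb}; \db)$ is Lipschitz continuous on $\R^n$~\cite[Proposition~2.1.1]{Clar83a}. Given any $\db \in T_{\Omega}(\hat{\xb})$, the regularity of $\Omega$ at $\hat{\xb}$ (Definition~\ref{def:clarketgtcone}) guarantees $T_{\Omega}(\hat{\xb}) = \overline{T^H_{\Omega}(\hat{\xb})}$, so there is a sequence $\{\db_j\} \subset T^H_{\Omega}(\hat{\xb})$ with $\db_j \rightarrow \db$. Continuity of $f^\circ(\hat{\xb}; \cdot)$ together with the previous paragraph gives $f^\circ(\hat{\xb}; \db) = \lim_{j} f^\circ(\hat{\xb}; \db_j) \geq 0$. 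Hence $f^\circ(\hat{\xb}; \db) \geq 0$ for every $\db \in T_{\Omega}(\hat{\xb})$, which is exactly the statement that $\hat{\xb}$ is Clarke stationary for Problem~\eqref{eq:constrained}.

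I expect the delicate point to be the extension to the closure: the conclusion on the full Clarke tangent cone is not available in general and hinges entirely on the regularity assumption, which replaces $T_{\Omega}(\hat{\xb})$ by the closure of the hypertangent cone, together with continuity of the Clarke directional derivative in its direction argument. Everything else reduces to a direct reading of Definition~\ref{def:refining} and an application of Lemma~\ref{l:cvrefineddirbarrier}; the genuine work lies in justifying that the inequality survives the limiting process, for which the Lipschitz continuity of $f$ near $\hat{\xb}$ is the crucial ingredient.
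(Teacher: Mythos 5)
Your proposal is correct and follows exactly the route the paper intends: apply Lemma~\ref{l:cvrefineddirbarrier} to conclude $f^\circ(\hat{\xb};\db)\ge 0$ on all of $T^H_{\Omega}(\hat{\xb})$ (since the hypothesis makes every hypertangent direction a refining direction), then use regularity of $\Omega$ at $\hat{\xb}$ together with the Lipschitz continuity of $\db \mapsto f^\circ(\hat{\xb};\db)$ to extend the inequality to the Clarke tangent cone, which is the definition of Clarke stationarity for the constrained problem. The paper only sketches this in one sentence (``using the Clarke tangent cone together with the result of Lemma~\ref{l:cvrefineddirbarrier}''), and your write-up supplies precisely the intended details, correctly identifying the closure-passage as the step where regularity and the continuity of the Clarke derivative are actually needed.
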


The extreme barrier approach thus provides a straightforward way of extending 
\revn{a direct-search method to constraint sets while preserving convergence 
guarantees.} \rev{On the other hand, when the constraints 
are quantifiable, using a progressive barrier prevents from getting any information 
regarding constraint violation. Infeasible approaches have been developed with that 
issue in mind, and we review them in the next section.}

\subsection{Infeasible methods}
\label{ssec:infeasDS}

This section is concerned with direct-search schemes that allow for evaluating the 
objective outside of the feasible set. Such methods are termed \emph{infeasible}, 
and apply to problems with quantitative relaxable constraints of the form
\begin{equation}
	\label{eq:quantitative}
	\begin{aligned}
		& \min_{\xb \in \R^n} f(\xb) 
		\qquad \mbox{s.t.} \quad \bm{g}(\xb) \le \mathbf{0} \, ,
	\end{aligned}
\end{equation}
where $\bm{g}: \mathbb{R}^n \rightarrow \mathbb{R}^l$. Thanks to the quantitative 
nature of the constraints, it is possible to define an infeasibility measure 
$h(\cdot)$ that quantifies the distance to infeasibility. Typical choices for 
such measures are
\begin{equation}
	\label{eq:infeasibility}
	\rev{h}(\xb) = \sum_{i = 1}^l \max\{0, g_i(\xb)\},
\end{equation}
\revn{which is typically nonsmooth,} and
\begin{equation}
	\label{eq:infeasibility2}
	\rev{h}(\xb) = \sum_{i = 1}^l \max\{0, g_i(\xb)^2\} 
\end{equation}
\revn{which is a smooth penalty function} for smooth functions $g_i$.

\rev{The theory for such \emph{progressive barrier} functions was developed 
in Audet and Dennis~\cite{audet2009progressive}. A two-phase approach for 
solving \revn{Problem}~\eqref{eq:quantitative} using $h$ consists {\rdj of} applying a 
suitable optimization method to the problem 
\begin{equation}
\label{eq:minhpb}
	\min_{\xb \in \R^n} h(\xb)
\end{equation}
in order to find a feasible point. Once such a point is obtained, the original \revn{Problem}~\eqref{eq:quantitative} can be tackled using the extreme barrier approach of Section~\ref{sssec:extremebarrier}.}

\rev{When the derivatives of $\bm{g}$ are not available, one may consider 
using Algorithm~\ref{alg:basicDS} or any of the methods discussed in 
Section~\ref{sec:unc} in the first phase. To this end, the sufficient 
decrease condition~\eqref{eq:basicDSdec} must be modified so as to accept 
any feasible point that is encountered by the algorithm. However, note that the
local nature of direct-search methods only guarantees convergence to a 
stationary point of the barrier function $h$. Still, provided the iterates 
converge towards the feasible set, a feasible point can be found in 
finitely many iterations. In that case, convergence results follow from that 
of the extreme barrier approach, and can be obtained for both progressive 
barrier functions~\eqref{eq:infeasibility} and~\eqref{eq:infeasibility2}.}

\rev{Since the two-phase approach ultimately reduces to an extreme barrier 
approach, it suffers from the same shortcoming in that information at 
infeasible points eventually stops being used. The next three sections describe 
algorithms that exploit infeasibility information.} \revnn{Note that these variants are inspired by derivative-based constrained optimization techniques and, as such, possess the same advantages and drawbacks that their derivative-based counterparts. Thus, rather than formally stating their convergence results, we provide a summary of the key guarantees achievable by each algorithmic strategy, drawing from the main definitions and vocabulary of Section~\ref{ssec:feasDS}.}

\subsubsection{Filter-based methods}
\label{sssec:filter}

\rev{A filter approach consists {\rdj of} a biobjective view of the constrained 
optimization \revn{Problem}~\eqref{eq:quantitative}, where the two objectives are the 
original function $f$ and the progressive barrier $h$. In a filter-based algorithm, 
a set of ``incumbent'' points $\revn{\mathbb{S}}_k$ is maintained throughout the algorithm, and 
the corresponding values of $f$ and $h$ are stored. At every iteration, $\revn{\mathbb{S}}_k$ is 
updated by adding new points and discarding points dominated by new additions, 
where $\xb$ is said to be dominated by $\yb$ (written $\xb \prec \yb$) if 
\[
	\left[ f(\xb) < f(\yb)\ \mbox{and}\ h(\xb) \le h(\yb)\right]
	\quad \mbox{or} \quad
	\left[ f(\xb) \le f(\yb)\ \mbox{and}\ h(\xb) < h(\yb)\right].
\]
With this definition, the set $\revn{\mathbb{S}}_k$ may consist of both feasible and infeasible 
points, the latter corresponding to better objective values.}

\rev{The first direct-search method employing a filter is a mesh-based algorithm 
proposed by Audet and Dennis~\cite{audet2004pattern}, but that method was not 
proven to converge towards stationary points for the original problem. The 
mesh-based algorithm of Dennis et al.~\cite{DePrCo04} possess such a guarantee, 
provided both the constraint functions and the objective are differentiable 
(with approximate gradients available for the constraints). The progressive 
barrier of Audet and Dennis~\cite{audet2009progressive,AuDeLe10} combines 
\revn{the mesh adaptive direct-search paradigm} with a filter based on a progressive barrier 
function. Algorithm~\ref{alg:pbDS} describes an analogous direct-search 
method based on Algorithm~\ref{alg:basicDS}. Note that this choice is made for 
simplicity, and that one could have presented a variant on 
Algorithm~\ref{alg:mads} for the same purpose.}

\begin{algorithm}[h]
	\caption{\rev{Progressive barrier \revn{direct-search method}}}
	\label{alg:pbDS}
	\begin{algorithmic}[1]
		\par\vspace*{0.1cm}
		\STATE \textbf{Inputs:} \rev{$\revn{\emptyset \neq \mathbb{S}}_0 \subset \R^n$, $\alpha_0> 0$, 
		$0 < \theta < 1 \le \gamma$, $\rho: \Rplus \rightarrow \Rplus$, 
		$h_0 > 0$.}
		\FOR{$k=0,1,2,\ldots$}		
			\STATE \rev{Select a set $\mathbb{D}_k$ of poll directions. 
			Select \revn{a} feasible point $\xb_k^F \in \revn{\mathbb{S}}_k$ if such a point 
			exists, and \revn{an} infeasible point $\xb_k^I \in \revn{\mathbb{S}}_k$ if such a point 
			exists.}
			\IF{\rev{ $\yb + \sba \prec_{\rho(\alpha_k)}  \yb$ holds for \revn{either $\yb=\xb_k^F$ or $\yb=\xb_k^I$} and 
			$\sba \in \{\revnn{\alpha_k} \db \ | \ \db \in \bbD_k \}$}}
			\STATE Declare the iteration as successful, add $\yb + \sba$ to 
			$\revn{\mathbb{S}}_k$, set $h_{k + 1} = h_k$, and $\alpha_{k + 1} = \gamma \alpha_k$.
		\ELSE
			\IF {\revn{$\xb_k^I$ was selected and there exists $\bm{s}\in \accolade{\alpha_k\bm{d}:\bm{d} \in \mathbb{D}_k}$ such that $h(\xk^I + \bm{s}) < h_k - \rho(\alpha_k)$ and  
			$f(\xk^I + \bm{s}) > f(\xk^I)$
			}}
				\STATE Declare the iteration as improving, add 
				$\rev{\xb_k^I} + \bm{s}$ to $\revn{\mathbb{S}}_k$, and \rev{choose 
				$h_{k + 1} \le h_{k} - \rho(\alpha_k)$.}\\
				Set $\alpha_{k + 1} = \alpha_k$.  
			\ELSE 
				\STATE Declare the iteration unsuccessful, set 
				$\revn{\mathbb{S}}_{k + 1} = \revn{\mathbb{S}}_k$, $h_{k + 1} = h_k$ and 
				$\alpha_{k + 1} = \theta \alpha_k$.
			\ENDIF
		\ENDIF
		\STATE Delete all the dominated points from $\revn{\mathbb{S}}_{k + 1}$.
		\ENDFOR
	\end{algorithmic}
\end{algorithm}

\rev{At every iteration, Algorithm seeks improvement with respect to 
either a feasible point (if one has been found) or an infeasible point. 
A successful iteration corresponds to finding a point that dominates one 
of the two points. Note that Algorithm~\ref{alg:pbDS} uses special dominance 
relations of the form $\xb \prec_{\varrho} \yb$ for some $\varrho>0$, that 
correspond to $\xb \prec \yb$ and $f(\xb) < f(\yb) - \varrho$ or 
$h(\xb) < h(\yb)- \varrho$. If one dominant point is found, this point 
is added to the set $\revn{\mathbb{S}}_k$. Otherwise, one checks whether an infeasible 
point with improved barrier value can be found, even if it increases 
the objective value, and such a point is added to $\revn{\mathbb{S}}_k$ if it exists. As 
a result, the method tries to improve both objectives or puts the 
emphasis on constraint violation.}

\rev{In a mesh-based setting, Audet and Dennis~\cite{audet2009progressive} show 
convergence guarantees for refining sequences of the algorithm. If such a sequence 
consists of feasible points, convergence to a (Clarke) stationary point can be 
established under density of the refining directions. If the sequence consists of 
infeasible points, only convergence to a stationary point for 
\revn{Problem}~\eqref{eq:minhpb} \revn{is guaranteed}.}

%

\subsubsection{\rev{Restoration-based} methods}
\label{sssec:restoration}

\rev{Restoration-based approaches rely on alternating steps \rev{that} improve the 
objective with so-called restoration steps, \revn{which focus}  on improving the measure 
of infeasibility $h$. The approach of Bueno et al.   \cite{bueno2013inexact} handles 
differentiable constraints, and alternates a gradient-based restoration phase with 
direct-search steps on a problem with linearized versions of the constraints. In 
this smooth setting, convergence to a KKT point for the original 
\revn{Problem}~\eqref{eq:quantitative} can be established.}

\rev{The method of Gratton and Vicente~\cite{gratton2014merit} considers two 
conditions to accept a point at iteration $k$, namely}
\begin{equation}
	\label{eq:restoration}
	h(\xb_k) - h(\xb_k + \alpha_k \db_k) > \rho(\alpha_k) 
	\quad \text{and} \quad 
	h(\xb_k) > C \rho(\alpha_k)
\end{equation}
for some $C > 1$, and 
\begin{equation}
	\label{eq:improvement}
	M(\xb_k, \mu) - M(\xb_k + \alpha_k \db_k, \mu) > \rho(\alpha_k)
\end{equation}
for some $\mu > 0$, \rev{where $M(\xb, \mu) = f(\xb) + \mu h(\xb)$ is referred to 
as a merit function}. Within the restoration phase, only~\eqref{eq:restoration} is 
considered \rev{to accept a new point}. \rev{Convergence results for this 
algorithm are established using the nonsmooth barrier~\eqref{eq:infeasibility}. For 
nonsmooth objective, stationarity of refining points is established with a similar 
distinction than in the filter-based setting of Audet and 
Dennis~\cite{audet2009progressive}. In addition, results are specialized to the 
case of smooth objectives, thereby avoiding to require density of the refining 
directions.}


\subsubsection{Penalty approaches}
\label{sssec:penalty}

In penalty based approaches, a direct-search scheme is applied to an auxiliary 
objective, that combines a measure of infeasibility with the objective function. 
\rev{The augmented Lagrangian technique proposed by Lewis and 
Torczon~\cite{lewis2002globally} handles} nonlinear relaxable equality constraints 
as well as bounds on the objective. This technique was later extended to handle 
linear constraints in addition to bound constraints~\cite{kolda2006generating}. 
\rev{In both cases, a directional method is used to solve the augmented Lagrangian 
subproblems.} The underlying theoretical guarantees rely on using the stepsize as a 
stopping criterion for solving the subproblems.

In another line of work, Liuzzi et al.~\cite{liuzzi2009derivative} \rev{proposes} 
an exact penalty approach to reformulate problem constrained with nonlinear, 
relaxable inequalities. The resulting penalized objective is defined on an open 
set, and tackled with a line-search method akin to those described in 
Section~\ref{ssec:linesearch}. Liuzzi et al.~\cite{liuzzi2010sequential} \rev{considers} a 
sequential penalty approach for problems with smooth objective, inequality 
constraints and bound constraints. In this approach, the inequality constraints 
other than bounds are relaxed and a line-search method is applied to minimize 
$f(\xb) + \frac{1}{\varepsilon} \rev{h}(\xb)$ under bound constraints, where 
\rev{$h$ is the smooth infeasibility measure~\eqref{eq:infeasibility2}} and 
$\varepsilon>0$ \rev{is decreased based on a condition involving an upper bound 
on the stepsizes}. The case of nonsmooth relaxable constraints combined with bounds 
was addressed by Fasano et al.~\cite{fasano2014linesearch}. Under constraint 
qualification and for small enough $\varepsilon$, it was then shown that all the 
stationary points corresponding to minimizing 
$f(\xb) + \frac{1}{\varepsilon} \rev{h}(\xb)$
\rev{with $h$ defined by~\eqref{eq:infeasibility}} are also (Clarke) 
stationary for the original problem. 

\section{\rev{Multiobjective \revn{direct-search methods}}}
\label{sec:multiobj}

\rev{In this section, we survey the main developments in extending 
\revn{direct-search schemes} 
to Multiobjective optimization (MOO) problems.
The theory of multiobjective continuous
optimization has undergone significant development since the 2000s, allowing
for further development in the context of direct-search methods. Moreover, 
multiobjective problems are ubiquitous in engineering problems, a primary 
source of applications for direct-search methods.}

\rev{Directional methods were extended to the multiobjective setting by 
Cust\'{o}dio et al.~\cite{custodio2011direct}, along with convergence to a
stationary point in the sense of Pareto. More recently, complexity bounds
were derived for a directional direct-search 
scheme~\cite{custodio2021worstcase}. Mesh-based algorithms were considered
by Audet et al.~\cite{audet2008multiobjective,audet2010mesh}, the
bi-objective version being successfully implemented in the NOMAD
software~\cite{NomadV4}. Finally, line-search techniques are amenable to 
multiobjective extension. Liuzzi et al.~\cite{liuzzi2016derivative} propose
an analysis based on new optimality conditions.}
\rev{We consider MOO problems of the form
\begin{equation}
\label{eq:MOO}
	\min_{\xb \in \R^n} F(\xb)\equiv(f_1(\xb),\dots, f_{\revn{l}}(\xb))^\top, \ 
\end{equation}
where $f_i:\R^n\to \R,$ with $i=1,\dots,\revn{l}$.  In this context the concept of Pareto dominance is used to compare two points. 
\begin{definition}[Pareto dominance]\label{paretodominance}
Given two points $\x,\y \in \R^n$, we say that $\x$ dominates $\y$, when  $F(\x)\leq F(\y)$, i.e., $\f_i(\x)\leq\f_i(\y)$ for all $i=1,\dots,\revn{l}$ and $\revn{F(\x)}\neq F(\y)$.
\end{definition}
A point $\revn{\hat{\xb}}$ is then a local Pareto minimizer if there exists no point  $\y\in {\cal B}(\revn{\hat{\xb}},\delta)$ s.t. $F(\y)\leq F(\revn{ \hat{\xb}})$, for some $\delta>0$. In both the nonsmooth and the smooth settings, one can define necessary optimality conditions as follows. 
\begin{definition}\label{Pareto-Clarke stationary}
  Let $F$ be Lipschitz continuous near $\revn{\hat{\xb}}$. We say that $\revn{\hat{\xb}}$ is a Pareto-Clarke stationary point if 
  \begin{equation}\label{cond:ClarkeNC}
    \forall\ \bm{d} \in \R^n, \exists\ i_d\in [1:\revn{l}]:\ f_{i_d}^\circ(\revn{\hat{\xb}};\bm{d}) \ge 0.
  \end{equation}
\end{definition}
\begin{definition}\label{Paretostationary}
  Let $F$ be continuously differentiable at $\revn{ \hat{\xb}}$. We say that $\revn{\hat{\xb}}$ is Pareto stationary, if 
  \begin{equation}\label{cond:ParetoNC}
    \forall\ \bm{d} \in \R^n, \exists\ i_d\in [1:\revn{l}]:\ \scal{\nabla f_{i_d}(\revn{ \hat{\xb}})}{\db}\ge 0.
  \end{equation}
\end{definition}
It follows that the quantity
\begin{equation}\label{Paretostatmeasure}
\mu(\x) := - \min_{\|\db\| \leq 1} \max_{i \in [1:\revn{l}]} \scal{\nabla f_{i}(\xb)}{\db}. 
\end{equation}
can be used to characterize Pareto stationarity.
In our context, a finite set of polling directions is however used at each iteration. An approximation of $\mu(\x)$ is hence often used in the analysis to take into account this aspect~\cite{custodio2021worstcase}:
\begin{equation}\label{approxParetostatmeasure}
\mu_{\bbD_k}(\x) := - \min_{\db \in \bbD_k, \|\db\| \leq 1} \max_{i \in [1:\revn{l}]} \scal{\nabla f_i(\x)}{\db}.
\end{equation}
We here focus on the Direct Multisearch (DMS) algorithm proposed in~\cite{custodio2011direct}. This approach, 
inspired by the search/poll paradigm of direct-search methods of directional type, does not aggregate any of the objective functions but rather handles a list $L_k$ of nondominated points (from which the new iterates are
chosen). The aim of DMS is to generate as many points in the Pareto front as possible from the polling
procedure itself, while keeping the whole framework general enough to accommodate other
strategies from the literature. DMS hence generalizes to multiobjective optimization (MOO) all direct-search methods of directional type. We report a simplified version of DMS in Algorithm \ref{alg:basicDMS}. It uses the strict partial order induced by the cone~$\R^m_+$. Following \cite{custodio2021worstcase}, we indicate with $D(L)\subset \R^m$ the image of the set points dominated by a list of points $L$ and $D(L;a)$ the image related to those set of points with an  $\ell_\infty$-norm  distance to $D(L)$ no larger than $a$. An iteration is successful in Algorithm \ref{alg:basicDMS} when there are modifications in the list of nondominated points, that is a new point was accepted, such that $F(\x)\notin D(L,\rho(\ssize))$.
It is crucial to emphasize that,  when dealing with MOO problems, the cone of descent directions related to all components of the objective $F$ can become arbitrarily narrow. We hence always need to consider density of the search directions at a given limit point~\cite{custodio2011direct,custodio2021worstcase} to guarantee convergence.
}

\newenvironment{algocolor}{%
   \setlength{\parindent}{0pt}
   \color{black}
}{}

\begin{algorithm}[h]
   \begin{algocolor}
	\caption{Basic Direct Multisearch method}
	\label{alg:basicDMS}
	\begin{algorithmic}[1]
        \par\vspace*{0.1cm}
        
		\STATE \textbf{Inputs:} Starting point/incumbent solution 
		$\x_0\in\mathbb{R}^{n}$, initial stepsize $\ssize_0> 0$, $L_0=\{(\x_0,\ssize_0)\}$,\\ 
        stepsize update parameters $0 < \theta < 1 \le \gamma$, forcing 
        function $\rho: \Rplus \rightarrow \Rplus$.
		\FOR{$k=0,1,2,\ldots$}
			\STATE Select a set $\bbD_k$ of poll directions and evaluate $F(\x)$  at the points $\{\x_k+\ssize_k \bm{d}:\ \bm{d}\in\bbD_k\}$
		      \STATE Compute $L_{k+1}$ by removing dominated points from $L_k\cup \{(\x_k+\ssize_k \bm{d}, \alpha_k):\ \bm{d}\in\bbD_k\}$  
              \IF{$L_{k+1}\neq L_k$} 
                \STATE Declare the iteration as successful, replace new pairs with $(\x_k+\alpha_k \bm{d}, \gamma \ssize_k)$ in $L_{k+1}$.
            \ELSE
			    \STATE Declare the iteration as unsuccessful, replace   $(\x_k, \alpha_k)$ with $(\x_k, \theta \ssize_k)$ in $L_{k+1}$.
            \ENDIF
		\ENDFOR
    
	\end{algorithmic}
\end{algocolor}
\end{algorithm}
\rev{
Convergence results can be established under the following assumptions, that generalize those made in single-objective optimization.
\begin{assumption}\label{ass:LCGmulti}
For all \( i \in [1:\revn{l}] \), the function \( f_i \) is continuously differentiable with Lipschitz continuous gradient with constant \( \revn{M_i} \). Set \( \revn{M_{\text{max}}} = \max_{i \in [1:\revn{l}]} \revn{M_i} \).
\end{assumption}
\begin{assumption}\label{ass:boundmulti}
For all \( i \in [1:\revn{l}] \), the function \( f_i \) is lower and upper bounded in \( \{ \x \in \mathbb{R}^n : F(\x) \notin D(\{\x_0\}) \} \), with lower bound \( f_i^{\min} \) and upper bound \( f_i^{\max} \). Let 
\[
F^{\min} := \min\{ f_1^{\min}, \dots, f_{\revn{l}}^{\min} \} \quad \text{and} \quad F^{\max} := \max\{ f_1^{\max}, \dots, f_{\revn{l}}^{\max} \}.
\]
\end{assumption}
\begin{assumption}\label{ass:compmulti}
The set \( \{ \x \in \mathbb{R}^n : F(\x) \notin D(\{\x_0\}) \} \) is compact.
\end{assumption}
We further assume, without any loss of generality that all the positive spanning sets considered have normalized directions. Similarly to the analysis carried out in the single-objective case, we now report a result that connects at each iteration $k$ the approximate stationarity measure $\mu_{\bbD_k}(\x_k)$  and the stepsize $\ssize_k$, whose proof is given in \cite{custodio2021worstcase}:
\begin{proposition}\label{prop:DMSsmooth}
Let Assumption \ref{ass:LCGmulti} hold. Consider that 
    Algorithm~\ref{alg:basicDMS} is applied to the minimization of~$F$, with $\bbD_k$ positive spanning set. 
    Suppose that the $k$th iteration of Algorithm~\ref{alg:basicDMS} is 
    unsuccessful. Then, 
	\begin{equation} \label{eq:deltakDMS}
		\mu_{\bbD_k}(\x_k)
		\le
		\ssize_k \frac{\revn{M_{\text{max}}}}{2} + \frac{\rho(\ssize_k)}{\ssize_k} 
		\,.
	\end{equation}
\end{proposition}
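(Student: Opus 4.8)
The plan is to mirror the single-objective argument of Lemma~\ref{l:re}, replacing the scalar decrease test by its multiobjective, list-based counterpart; the genuine difficulty is translating an unsuccessful DMS iteration into one scalar inequality per poll direction.

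First I would extract such a scalar consequence from unsuccessfulness. Because the $k$th iteration is unsuccessful, the list is unchanged ($L_{k+1}=L_k$), so no trial point entered it, i.e. $F(\xk+\ssize_k\db)\in D(L_k;\rho(\ssize_k))$ for every $\db\in\bbD_k$. Since dominated points are always discarded, $\xk$ is nondominated within $L_k$. I claim this forbids any $\db$ from improving all objectives by more than $\rho(\ssize_k)$: if $f_i(\xk+\ssize_k\db)<f_i(\xk)-\rho(\ssize_k)$ held for every $i\in[1:l]$, then any $\zb\in D(L_k)$ within $\ell_\infty$-distance $\rho(\ssize_k)$ of $F(\xk+\ssize_k\db)$ would satisfy $\zb<F(\xk)$ componentwise, forcing some list point to dominate $\xk$, a contradiction; hence $F(\xk+\ssize_k\db)\notin D(L_k;\rho(\ssize_k))$, contradicting unsuccessfulness. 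Consequently, for every $\db\in\bbD_k$ there is an index $i_{\db}\in[1:l]$ with
\begin{equation*}
	f_{i_{\db}}(\xk+\ssize_k\db)\ge f_{i_{\db}}(\xk)-\rho(\ssize_k).
\end{equation*}
This translation, carried out in detail in~\cite{custodio2021worstcase}, is the main obstacle, since it requires unwinding the acceptance mechanism and the precise meaning of $D(L_k;\rho(\ssize_k))$.

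Next I would apply smoothness componentwise. By Assumption~\ref{ass:LCGmulti} the component $f_{i_{\db}}$ is $M_{i_{\db}}$-smooth, so the descent inequality used in Lemma~\ref{l:re} gives
\begin{equation*}
	f_{i_{\db}}(\xk+\ssize_k\db)\le f_{i_{\db}}(\xk)+\ssize_k\scal{\nabla f_{i_{\db}}(\xk)}{\db}+\frac{M_{i_{\db}}\ssize_k^2}{2}\n{\db}^2.
\end{equation*}
Combining with the previous display, dividing by $\ssize_k>0$, and using the normalization $\n{\db}=1$ together with $M_{i_{\db}}\le M_{\text{max}}$ yields $\scal{\nabla f_{i_{\db}}(\xk)}{\db}\ge-\ssize_k\tfrac{M_{\text{max}}}{2}-\tfrac{\rho(\ssize_k)}{\ssize_k}$.

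Finally I would pass to the stationarity measure. For each $\db\in\bbD_k$ the bound concerns a single index, so $\max_{i\in[1:l]}\scal{\nabla f_i(\xk)}{\db}\ge\scal{\nabla f_{i_{\db}}(\xk)}{\db}\ge-\ssize_k\tfrac{M_{\text{max}}}{2}-\tfrac{\rho(\ssize_k)}{\ssize_k}$. As the directions are normalized, this lower bound holds uniformly over $\db\in\bbD_k$; taking the minimum over $\db$ and negating, the definition of $\mu_{\bbD_k}$ gives exactly~\eqref{eq:deltakDMS}.
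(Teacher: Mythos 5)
Your argument is correct and follows essentially the same route as the proof the paper points to in Cust\'odio et al.~\cite{custodio2021worstcase}: you first convert unsuccessfulness of the list update into the scalar condition that, for each poll direction $\db$, some component $f_{i_{\db}}$ fails to decrease by more than $\rho(\ssize_k)$, and then you apply the descent lemma componentwise exactly as in Lemma~\ref{l:re} before passing to $\mu_{\bbD_k}$. The contradiction argument you give for the translation step (using that $\xb_k$ is nondominated in $L_k$ and the definition of $D(L_k;\rho(\ssize_k))$) is the right one, and the normalization $\n{\db}=1$ you invoke is indeed assumed in the surrounding text.
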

The following assumption, which connects the exact and approximate stationarity measure, is also needed in the analysis.
\begin{assumption}\label{connectstmeasuremulti}
There exists \( C_1 > 0 \) such that 
\[
|\mu_{\bbD_k}(\x_k) - \mu(\x_k)| \leq C_1 \mu_{\bbD_k}(\x_k), \quad \forall\ k \geq 0.
\]
\end{assumption}
A direct application of Proposition~\ref{prop:DMSsmooth} under this assumption yields the following convergence result.
\begin{theorem}  \label{th:liminfdirdms}
    Suppose that Algorithm~\ref{alg:basicDMS} is applied to  $F$ under Assumptions~~\ref{ass:LCGmulti}, \ref{ass:boundmulti} and \ref{connectstmeasuremulti}. \rev{Suppose also 
    that the sequence $\{\bbD_k\}$ is such that $\bbD_k$ is a positive spanning set for all $k\geq 0$. Let $\mathcal{K}$ be a index set of unsuccessful iterations such that 
    $\lim_{k\in \mathcal{K}} \alpha_k \rightarrow 0$. Then,}
	\begin{equation}
    \label{eq:liminfdirdms}
		\lim_{k \in \mathcal{K}} \mu(\x_k)= 0.
	\end{equation}
\end{theorem}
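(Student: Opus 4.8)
The plan is to mirror the single-objective argument behind Theorem~\ref{th:liminfdirds}, replacing the unsuccessful-iteration bound of Proposition~\ref{prop:unsuccPSSsmooth} by its multiobjective counterpart in Proposition~\ref{prop:DMSsmooth}, and then transferring the resulting estimate on the \emph{approximate} measure $\mu_{\bbD_k}(\x_k)$ to the \emph{exact} measure $\mu(\x_k)$ by means of Assumption~\ref{connectstmeasuremulti}. First I would fix $k \in \mathcal{K}$; since every such iteration is unsuccessful and $\bbD_k$ is a positive spanning set, Assumption~\ref{ass:LCGmulti} lets me apply Proposition~\ref{prop:DMSsmooth}, which gives
\begin{equation*}
	\mu_{\bbD_k}(\x_k) \le \alpha_k \frac{M_{\max}}{2} + \frac{\rho(\alpha_k)}{\alpha_k}.
\end{equation*}

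Next I would exploit Assumption~\ref{connectstmeasuremulti}. Note that the inequality $|\mu_{\bbD_k}(\x_k) - \mu(\x_k)| \le C_1 \mu_{\bbD_k}(\x_k)$ already forces $\mu_{\bbD_k}(\x_k) \ge 0$, since otherwise its right-hand side would be negative while its left-hand side is nonnegative. Rearranging the same inequality yields $\mu(\x_k) \le (1 + C_1)\,\mu_{\bbD_k}(\x_k)$, so that combining with the bound above gives
\begin{equation*}
	0 \le \mu(\x_k) \le (1 + C_1)\left[\alpha_k \frac{M_{\max}}{2} + \frac{\rho(\alpha_k)}{\alpha_k}\right],
\end{equation*}
where the leftmost inequality holds because $\mu(\x_k)\ge 0$ for every iterate (take $\db=\zerob$ in the minimization defining~$\mu$ in~\eqref{Paretostatmeasure}).

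Finally, I would pass to the limit along $\mathcal{K}$. By hypothesis $\alpha_k \to 0$, so the first term vanishes, and the forcing-function property $\rho(\alpha)/\alpha \to 0$ as $\alpha \to 0$ (Assumption~\ref{ass:rho}) annihilates the second term, so the whole upper bound tends to $0$; the two-sided estimate then squeezes $\mu(\x_k)$ to $0$, establishing~\eqref{eq:liminfdirdms}. I expect no serious obstacle in this argument, as the substantive work is already encapsulated in Proposition~\ref{prop:DMSsmooth}; the only points deserving care are the nonnegativity of $\mu$ (and of $\mu_{\bbD_k}$, extracted from Assumption~\ref{connectstmeasuremulti}, which is what lets me conclude an exact limit rather than merely $\limsup_{k\in\mathcal{K}}\mu(\x_k)\le 0$), and making explicit that the input forcing function obeys $\rho(\alpha)=o(\alpha)$, the standing hypothesis needed to discard the $\rho(\alpha_k)/\alpha_k$ term. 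As in the single-objective case, Assumption~\ref{ass:boundmulti} plays no role in this particular $\liminf$-type statement and is only invoked for the companion refining-subsequence results.
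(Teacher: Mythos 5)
Your proof is correct and follows exactly the route the paper intends: the paper gives no explicit proof beyond stating that the result is ``a direct application of Proposition~\ref{prop:DMSsmooth}'' under Assumption~\ref{connectstmeasuremulti}, and your argument (bound $\mu_{\bbD_k}(\x_k)$ on unsuccessful iterations, convert to $\mu(\x_k)\le(1+C_1)\mu_{\bbD_k}(\x_k)$, squeeze using $\alpha_k\to 0$ and $\rho(\alpha)=o(\alpha)$) is precisely that application, mirroring the proof of Theorem~\ref{th:liminfdirds}. Your side remarks --- that nonnegativity of $\mu_{\bbD_k}$ is implicit in Assumption~\ref{connectstmeasuremulti}, and that the $o(\alpha)$ property of the forcing function is a standing hypothesis not listed in the theorem statement --- are both accurate.
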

As noticed in \cite{custodio2021worstcase},  Assumption \ref{ass:compmulti} and the use of sufficient decrease to accept new nondominated points guarantee the existence of a refining subsequence $\{\x_k\}_\mathcal{K}$  such that $\{\ssize_k\}_\mathcal{K}$ converges to zero (see Definition~\ref{def:refining}). We would also like to highlight that Assumption \ref{connectstmeasuremulti} basically requires nonnegativity of the approximate measure $\mu_{\bbD_k}(\x_k)$ at each $k$. This can only be obtained if a rich enough set $\bbD_k$ of poll directions is available.}

\rev{We now state the main convergence result for the basic DMS algorithmic scheme we reported above. Similarly to what we have seen in the single-objective case, we consider refining subsequences and assume density in the unit sphere for the related refining directions. The proof is directly derived from~{\rdj \cite[Theorem~4.9]{custodio2011direct}}.
\begin{theorem} \label{t:cDMS}
    Suppose that Algorithm~\ref{alg:basicDMS} is applied to a Lipschitz 
    continuous function $F$. Let $\mathcal{K}$ be a sequence of unsuccessful steps such 
    that $\{\xb_k\}_{k \in \mathcal{K}}$ is a refining subsequence converging to $\revn{ \hat{\xb}}$. 
    If the corresponding set of refining directions is dense in the unit sphere, 
    then $\revn{\hat{\xb}}$ is a Pareto-Clarke stationary point. If in addition $F$ is continuously differentiable, then $\revn{ \hat{\xb}}$ is a Pareto stationary point.
\end{theorem}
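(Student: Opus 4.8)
The plan is to mirror the single-objective arguments of Lemma~\ref{l:cvrefineddir} and Theorem~\ref{t:cdir}, the whole point being to reduce the vector-valued acceptance rule of Algorithm~\ref{alg:basicDMS} to a \emph{scalar} blocking inequality on a single, direction-dependent component of $F$. First I would record the structural consequence of an unsuccessful iteration, namely the same fact that underlies Proposition~\ref{prop:DMSsmooth}: if iteration $k$ is unsuccessful, then for every $\db \in \bbD_k$ the trial point fails to modify the list, i.e. $F(\xb_k+\alpha_k\db) \in D(L_k,\rho(\alpha_k))$. Since $\xb_k$ is itself a nondominated point of $L_k$, no list point lies strictly below $F(\xb_k)$; hence $\xb_k+\alpha_k\db$ cannot dominate $\xb_k$ by the margin $\rho(\alpha_k)$ in all components (otherwise it would remove $\xb_k$ and change the list). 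Consequently there exists an index $i=i(k,\db) \in [1:l]$ with
\[
	\frac{f_i(\xb_k+\alpha_k\db)-f_i(\xb_k)}{\alpha_k} \ge -\frac{\rho(\alpha_k)}{\alpha_k},
\]
which is precisely the multiobjective analogue of~\eqref{eq:unsuccgen}.

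Next I would fix a refining direction $\db$ for $\hat{\xb}$, with associated infinite index set $\mathcal{L}\subset\mathcal{K}$ and directions $\db_\ell\in\bbD_\ell$ satisfying $\db_\ell/\|\db_\ell\| \to \db/\|\db\|$. Applying the blocking inequality at each $\ell\in\mathcal{L}$ produces an index $i_\ell$; since $[1:l]$ is finite and $\mathcal{L}$ infinite, a pigeonhole argument extracts a single index $i^\star$ and an infinite subset $\mathcal{L}'\subseteq\mathcal{L}$ with $i_\ell = i^\star$ for all $\ell\in\mathcal{L}'$. Along $\mathcal{L}'$ I then replay the limiting argument of Lemma~\ref{l:cvrefineddir} applied to the scalar Lipschitz continuous function $f_{i^\star}$: using $\xb_\ell\to\hat{\xb}$, $\alpha_\ell\to 0$, $\rho(\alpha_\ell)/\alpha_\ell\to 0$ (Assumption~\ref{ass:rho}), and positive homogeneity of the Clarke derivative to absorb the normalization of $\db_\ell$, the definition of $f_{i^\star}^\circ$ yields $f_{i^\star}^\circ(\hat{\xb};\db)\ge 0$. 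Thus every refining direction is blocked by at least one component of $F$.

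I would then upgrade this from refining directions to \emph{all} directions through the density hypothesis, exactly as in Theorem~\ref{t:cdir}. Taking an arbitrary unit vector $\bm{u}$ and a sequence of refining directions $\db_j\to\bm{u}$, each carries a blocking index $i_j$, and a second pigeonhole over the finite set $[1:l]$ fixes an index $i^{\star\star}$ along a subsequence with $f_{i^{\star\star}}^\circ(\hat{\xb};\db_j)\ge 0$. Upper semicontinuity of $\bm{d}\mapsto f_{i^{\star\star}}^\circ(\hat{\xb};\bm{d})$ for Lipschitz functions~\cite{Clar83a} then gives $f_{i^{\star\star}}^\circ(\hat{\xb};\bm{u})\ge 0$, and positive homogeneity extends this to every $\bm{d}\in\R^n$. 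This is condition~\eqref{cond:ClarkeNC}, so $\hat{\xb}$ is Pareto--Clarke stationary. For the second claim, if $F$ is continuously differentiable then $f_i^\circ(\hat{\xb};\bm{d}) = \scal{\nabla f_i(\hat{\xb})}{\bm{d}}$ for each $i$, and the Pareto--Clarke condition becomes~\eqref{cond:ParetoNC}, i.e. $\hat{\xb}$ is Pareto stationary.

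I expect the main obstacle to be the first step: justifying rigorously, from the list-and-margin dominance rule, that an unsuccessful poll leaves at least one component of $F$ undecreased relative to the poll center $\xb_k$. This reduction of a genuinely vector-valued acceptance test to a single scalar blocking component is the only genuinely multiobjective ingredient, and it relies crucially on $\xb_k$ being nondominated within $L_k$. The accompanying bookkeeping is the propagation of the direction-dependent index through the two pigeonhole extractions, but since there are only finitely many objectives this is routine. Once the blocking inequality is in place, the remainder is a component-wise repetition of the single-objective theory of Lemma~\ref{l:cvrefineddir} and Theorem~\ref{t:cdir}.
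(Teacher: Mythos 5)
Your proposal is correct and follows essentially the same route as the paper, which itself only cites \cite[Theorem~4.9]{custodio2011direct}: the argument there is precisely your reduction of an unsuccessful poll to a scalar blocking inequality on one component of $F$ (valid because $\hat{\xb}_k$ is nondominated in $L_k$, so a trial point beating it by the margin in every component could not lie in $D(L_k,\rho(\alpha_k))$ and the iteration would be successful), followed by the single-objective limiting argument of Lemma~\ref{l:cvrefineddir} and the density step of Theorem~\ref{t:cdir}. The pigeonhole extractions over the finite index set $[1:l]$ and the use of positive homogeneity and continuity of $f_i^{\circ}(\hat{\xb};\cdot)$ are exactly the bookkeeping needed, so no gap remains.
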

It is important to note that the density of the refining directions is also needed in the smooth case. As mentioned before, this is obviously due to the fact that the cone of descent directions for the components of the function $F$ can become arbitrarily narrow.
}

\rev{
In addition to guaranteeing global convergence, the 
sufficient decrease condition also provides DMS  with complexity 
bounds that quantify how many iterations or function evaluations are needed in the worst case scenario to reach an approximate optimality 
criterion, such as $ \mu(\x_k)\le \epsilon$. This is once again comparable to what we observed in the single-objective case, though we note that the analysis is somewhat more involved. Indeed, it is based on
linked sequences, i.e., sequences of pairs $(\x_{j_k},\ssize_{j_k})$, such that for any $k=1,2,\dots$ the pair $(\x_{j_k},\ssize_{j_k})\in L_k$ is generated at iteration $k-1$ of Algorithm \ref{alg:basicDMS} by the pair
$(\x_{j_{k-1}},\ssize_{j_{k-1}})\in L_{k-1}$.
\begin{theorem}[\cite{custodio2021worstcase}]
\label{t:DMScomplexity}
    Let the assumptions of Theorem~\ref{th:liminfdirdms} hold. 
    Suppose further that $\rho(\alpha) = c\,\alpha^2$ for some $c > 0$, 
    and that $|\bbD_k| \leq \revn{m}$ for 
    every $k \in \mathbb{N}$. Then, for any 
    $\epsilon>0$, \rev{Algorithm~\ref{alg:basicDMS} reaches an iterate 
    $\xb_k$ such that $\mu(\x_k) \le \epsilon$ in at most 
    \begin{equation}
    \label{eq:n2complexityDMS}
    	\cO\left(|L(\epsilon)|\epsilon^{-2m}\right)
    	\ \mbox{iterations} 
    	\quad \mbox{and} \quad
    	\cO\left(\revn{m}|L(\epsilon)|\epsilon^{-2m}\right)
    	\ \mbox{function evaluations},
    \end{equation}
    where $L(\epsilon)$ is the set of linked sequences between the first unsuccessful iteration and the iteration before the one that satisfies the criticality condition.
    }
\end{theorem}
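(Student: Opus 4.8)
The plan is to transpose the single-objective complexity argument behind Theorem~\ref{t:n2complexity} to the multiobjective, list-based setting, with the \emph{linked sequences} $(\x_{j_k},\ssize_{j_k})$ playing the role that the single iterate sequence plays in the scalar case. The entry point is Proposition~\ref{prop:DMSsmooth}: at any unsuccessful iteration $k$, substituting $\rho(\ssize)=c\,\ssize^2$ gives $\mu_{\bbD_k}(\x_k)\le(\tfrac{M_{\text{max}}}{2}+c)\,\ssize_k$, and Assumption~\ref{connectstmeasuremulti} upgrades this to $\mu(\x_k)\le(1+C_1)(\tfrac{M_{\text{max}}}{2}+c)\,\ssize_k$. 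Consequently, as long as criticality has not been reached, i.e. $\mu(\x_k)>\epsilon$, every unsuccessful iteration must satisfy $\ssize_k>\ssize_{\min}$ with $\ssize_{\min}:=\epsilon\big[(1+C_1)(\tfrac{M_{\text{max}}}{2}+c)\big]^{-1}=\Theta(\epsilon)$. This converts the stationarity target into a uniform lower bound on the stepsize, exactly as in the scalar analysis.

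Next I would bound the number of list-changing (successful and improving) iterations. Along one linked sequence, each accepted pair strictly dominates its predecessor with margin $\rho(\ssize_k)=c\,\ssize_k^2\ge c\,\ssize_{\min}^2=\Theta(\epsilon^2)$ in at least one component of $F$; since Assumption~\ref{ass:boundmulti} confines every $f_i$ to $[F^{\min},F^{\max}]$ on the relevant set, telescoping caps the number of accepted points in that sequence by $\cO((F^{\max}-F^{\min})\,\epsilon^{-2})$. The count of unsuccessful iterations is then controlled by the usual stepsize-ratio argument: successes scale $\ssize_k$ by $\gamma\ge1$ and failures by $\theta\in(0,1)$, while $\ssize_k$ cannot drop below $\ssize_{\min}$ before criticality, so the number of failures is at most a fixed multiple of the number of successes plus an additive $\cO(\log(\ssize_0/\ssize_{\min}))$ term.

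It then remains to assemble these per-sequence estimates over the full list. Because DMS does not advance a single incumbent but an evolving set of nondominated points, progress before criticality must be traced through all the linked sequences active in the window delimiting $L(\epsilon)$; summing the per-sequence iteration bound over those $|L(\epsilon)|$ sequences, and charging each iteration at most $|\bbD_k|\le m$ evaluations of $F$, produces the stated $\cO(|L(\epsilon)|\,\epsilon^{-2m})$ iteration and $\cO(m\,|L(\epsilon)|\,\epsilon^{-2m})$ evaluation bounds.

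The main obstacle is precisely this last aggregation step. In the scalar case the sufficient-decrease telescoping applies verbatim to the one sequence of iterates, whereas here an iteration that is successful for the \emph{list} need not advance any prescribed linked sequence, and newly added nondominated points can spawn additional sequences. Making the counting rigorous therefore requires the formal linked-sequence construction, a proof that each such sequence individually obeys the scalar-style decrease and stepsize bounds, and a careful combinatorial accounting of how many sequences can coexist before criticality --- this is where the factor $|L(\epsilon)|$ and the precise power of $\epsilon$ enter, and reconciling that combinatorial growth with the stated exponent is the delicate part of the argument, which I would follow from~\cite{custodio2021worstcase}.
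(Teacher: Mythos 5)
Your reduction of the target $\mu(\x_k)\le\epsilon$ to a stepsize lower bound $\alpha_k=\Theta(\epsilon)$ via Proposition~\ref{prop:DMSsmooth} and Assumption~\ref{connectstmeasuremulti} is correct and is indeed how the argument starts. The gap is in your count of successful iterations, which is the heart of the proof. You telescope the sufficient decrease ``in at least one component'' along a linked sequence to obtain $\cO((F^{\max}-F^{\min})\epsilon^{-2})$ accepted points per sequence. This fails: acceptance in DMS requires $F(\x)\notin D(L_k;\rho(\alpha_k))$, which guarantees that the new point improves every list member by more than $\rho(\alpha_k)$ in \emph{some} component, but that component changes from one accepted point to the next, and the remaining components may increase. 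There is therefore no monotone quantity to telescope, and your per-sequence bound $\cO(\epsilon^{-2})$ is not justified --- indeed it cannot be, since summing it over $|L(\epsilon)|$ sequences would give $\cO(|L(\epsilon)|\epsilon^{-2})$ rather than the stated $\cO(|L(\epsilon)|\epsilon^{-2m})$; you notice this mismatch but defer its resolution entirely to the reference, which is precisely the missing core of the proof.

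The mechanism that actually produces the exponent in \cite{custodio2021worstcase} is a hypervolume (covering) argument in the image space: each successful iteration occurring while $\alpha_k\ge\alpha_{\min}=\Theta(\epsilon)$ adds a point whose image lies at $\ell_\infty$-distance greater than $\rho(\alpha_{\min})=\Theta(\epsilon^2)$ from the previously dominated region, hence enlarges $D(L_k)$ by a hypercube of volume $\Theta(\epsilon^{2m})$ inside a region whose volume is $\cO((F^{\max}-F^{\min})^m)$ by Assumptions~\ref{ass:boundmulti} and~\ref{ass:compmulti}. This is where $\epsilon^{-2m}$ comes from, with $m$ the number of objective components (note the notational collision in the statement, where $m$ also bounds $|\bbD_k|$). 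The linked sequences and the factor $|L(\epsilon)|$ enter only afterwards, to control the unsuccessful iterations: each linked sequence individually obeys the scalar stepsize-ratio argument you describe, and those counts are summed over the $|L(\epsilon)|$ sequences. Your outline thus inverts the roles of the two counting devices, and the step you flag as ``delicate'' is not a bookkeeping detail but the essential idea that needs to be supplied.
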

The reported bounds do not match the $\cO\left(\epsilon^{-2}\right)$ complexity bound that the gradient method for MOO problems has (see \cite{fliege2019complexity} for further details). The main reason behind this difference is that an iteration is declared successful in DMS when a sufficient decrease is guaranteed for at least one component of $F$, while in the gradient method the iterate is updated when a suitable decrease is obtained with respect to all components. The dependence on the term $|L(\epsilon)|$ is due instead to the fact that DMS tries to approximate the whole Pareto front using the list $L_k$, while the gradient method in \cite{fliege2019complexity} only finds one Pareto stationary point. 
}
\revn{ As a final remark, we would like to highlight that for the case of (strongly) convex optimization problems, i.e., where all components of $F$ are (strongly) convex, a complexity analysis should ideally expand upon the findings presented in \cite{dodangeh2016worst}, which focus on the single-objective scenario. However, it still remains as an open question whether similar complexity bounds to those derived in \cite{dodangeh2016worst} could be established for the multiobjective case.}

\rev{The handling of constraints in a MOO problem was already addressed in \cite{audet2008multiobjective,audet2010mesh,custodio2011direct}, where an extreme barrier approach was used for this purpose. An exact penalty approach was analyzed in \cite{liuzzi2016derivative} for handling general constraints in a linesearch-based  algorithmic framework. To the best of the authors' knowledge, no existing work provides a detailed account of a Direct-Search algorithm for stochastic MOO problems.}
\section{Conclusion}
\label{sec:conc}

Since their formal introduction more than 60 years ago, direct-search 
algorithms have been endowed with a rich convergence theory. \rev{In addition 
to possessing convergence and complexity guarantees in the smooth,  
unconstrained setting,} direct-search methods can also be designed to cope 
with nonsmoothness, \rev{noisy function values, constraints of various types 
as well as multiple objectives.} In this survey, we provided key results 
\rev{as well as pointers to the literature} that we hope 
\revn{have convinced} the readers of the strong theoretical foundations upon 
which modern direct-search algorithms are built.

Our presentation has highlighted several \rev{lines of research that will in 
our opinion shape} the future of direct-search techniques. Recent advances in 
analyzing randomized direct-search techniques pave the way for scaling up 
those algorithms, possibly through combination with other tools from 
high-dimensional statistics. \rev{Modern constrained optimization techniques 
such as manifold theory provide principled ways of handling common 
constraints}\revnn{, and may lead to novel analyzes departing from the use of 
standard nonlinear programming tools.} \rev{Tackling noise in function values is likely to keep growing 
in importance to address problems involving massive amounts of data samples 
such as hyperparameter tuning}\revnn{, while much remains to be done in 
obtaining a comprehensive, probabilistic analysis of direct-search algorithms in that setting.} \revnn{Overall, we believe that these three} research directions are 
likely to form the basis of next-generation direct-search methods.


\bibliography{references2}
\bibliographystyle{abbrvnat}


\end{document}